\newtheorem{thm}{Theorem}[section]
\newtheorem*{thm*}{Theorem}
\newtheorem{lem}[thm]{Lemma}
\newtheorem*{lem*}{Lemma}
\newtheorem{cor}[thm]{Corollary}
\newtheorem{prop}[thm]{Proposition}
\theoremstyle{definition}
\newtheorem{case}{Case}\renewcommand{\thecase}{}
\newtheorem*{case*}{Case}
\newtheorem{defn}[thm]{Definition}
\newtheorem*{defn*}{Definition}
\newtheorem*{exmp*}{Example}
\newtheorem{rmk}[thm]{Remark}
\newtheorem*{rmk*}{Remark}
\newtheorem{step}{Step}\renewcommand{\thestep}{}
\theoremstyle{remark}
\def\alphenumi{
  \def\theenumi{\alph{enumi}}
  \def\p@enumi{\theenumi}
  \def\labelenumi{(\@alph\c@enumi)}}
\def\thecase{\@arabic\c@case}
\def\thestep{\@arabic\c@step}
\newtheorem{rem}[thm]{Remark}
\newcommand{\R}{\mathbb R}
\newcommand{\C}{\mathscr C}
\newcommand{\E}{\mathscr E}
\newcommand{\HH}{\mathbb H}
\newcommand{\F}{{\mathscr F}(u)}
\newcommand{\Bo}{B_\rho(P_0)}
\newcommand{\Bop}{B_\rho^+(P_0)}
\newcommand{\Boy}{B_{\rho y_0}(P_0)}
\newcommand{\Boo}{B_\rho}
\newcommand{\Bdy}{B_{dy_0}(P_0)}
\newcommand{\Qdy}{B_{dy_0}(P_0)}
\newcommand{\lo}{l_{P_0}}
\newcommand{\dist}{\mathrm{dist}}
\newcommand\loc{\operatorname{loc}}
\newcommand\sA{{\mathscr{A}}}
\newcommand\sC{{\mathscr{C}}}
\newcommand\sE{{\mathscr{E}}}
\newcommand\sF{{\mathscr{F}}}
\newcommand\sO{{\mathscr{O}}}
\newcommand\fw{{\mathfrak{w}}}
\numberwithin{equation}{section}
\renewcommand\emptyset{\varnothing}
\begin{document}

\title[$C^{1,1}$ regularity for degenerate elliptic obstacle problems]
{$C^{1,1}$ regularity for degenerate elliptic obstacle problems}

\author[P. Daskalopoulos]{Panagiota Daskalopoulos}
\address{Department of Mathematics, Columbia University, New York, NY 10027, United States of America}
\email{pdaskalo@math.columbia.edu}

\author[Paul M. N. Feehan]{Paul M. N. Feehan}
\address{Department of Mathematics, Rutgers, The State University of New Jersey, 110 Frelinghuysen Road, Piscataway, NJ 08854-8019, United States of America}
\email{feehan@math.rutgers.edu}

\curraddr{School of Mathematics, Institute for Advanced Study, Princeton, NJ 08540}
\email{feehan@math.ias.edu}

\subjclass[2010]{Primary 35J70, 35J86, 49J40, 35R45; Secondary 35R35, 49J20, 60J60}

% AMS keywords (used in AMS journals)
\keywords{Degenerate elliptic differential operator; Free boundary problem; Heston stochastic volatility process; Obstacle problem; Variational inequality; Weighted H\"older and Sobolev spaces}

% Acknowledge support
\thanks{Daskalopoulos was partially supported by NSF grant DMS-0905749. Feehan was partially supported by NSF grant DMS-1059206 and the Max Planck Institut f\"ur Mathematik in der Naturwissenschaft.}

% Today's or specified date
%\date{\today}
\date{Journal of Differential Equations (2016), \url{http://dx.doi.org/10.1016/j.jde.2015.11.037}. This version: December 20, 2015, incorporating final galley proof corrections.}

\begin{abstract}
The Heston stochastic volatility process is a degenerate diffusion process where the degeneracy in the diffusion coefficient is proportional to the square root of the distance to the boundary of the half-plane. The generator of this process with killing, called the elliptic Heston operator, is a second-order, degenerate-elliptic partial differential operator, where the degeneracy in the operator symbol is proportional to the distance to the boundary of the half-plane. In mathematical finance, solutions to the obstacle problem for the elliptic Heston operator correspond to value functions for perpetual American-style options on the underlying asset. With the aid of weighted Sobolev spaces and weighted H\"older spaces, we establish  the optimal $C^{1,1}$  regularity (up to the boundary of the half-plane) for solutions to obstacle problems for the elliptic Heston operator when the obstacle functions are sufficiently smooth.
\end{abstract}

\maketitle
\tableofcontents

\section{Introduction}
\label{sec-intro}
In \cite{Daskalopoulos_Feehan_statvarineqheston} (see also \cite{Feehan_maximumprinciple, Feehan_Pop_higherregularityweaksoln}), the authors established the existence and  uniqueness of a solution in a weighted Sobolev space\footnote{See section \ref{sec:Spaces} for their definitions.}, $u \in H_{\loc}^2(\sO\cup\Gamma_0,\fw)\cap H^1(\sO,\fw)$,  to the obstacle problem,
\begin{equation}
\label{eqn-obst1}
\begin{aligned}
\min\{Au-f,u-\psi\} &= 0 \quad \hbox{a.e. on }\sO,
\\
u &= g \quad\hbox{on } \Gamma_1,
\end{aligned}
\end{equation}
for the \emph{Heston operator} \cite{Heston1993},
\begin{equation}
\label{eqn-Heston}
Au := -\frac{y}{2}\left(u_{xx} + 2\varrho \sigma u_{xy} + \sigma^2 u_{yy}\right) - \left(r-q-\frac{y}{2}\right)u_x - \kappa(\theta-y)u_y + ru,
\end{equation}
on a subdomain  $\sO$ (possibly unbounded) of the  upper half-plane $\HH := \R \times (0,\infty)$, where $f:\sO\to\R$ is a source function, $g:\Gamma_1\to\R$ prescribes a Dirichlet boundary condition along $\Gamma_1 := \HH\cap\partial\sO$,  and $\psi:\sO\cup\Gamma_1\to\R$ is an obstacle function which is compatible with $g$ in the sense that $\psi\leq g$ on $\Gamma_1$. The differential operator $A$ given in  \eqref{eqn-Heston}  is elliptic  on $\sO$ but becomes degenerate along $\bar\Gamma_0$, where $\Gamma_0$ denotes the interior of $\{y=0\}\cap\partial\sO$. Because
$\kappa \theta >0$ (see assumption \eqref{eqn-coeff} below), {\em no boundary condition} is prescribed along the portion $\bar\Gamma_0$ of the boundary $\partial\sO = \bar\Gamma_0\cup\Gamma_1$ of $\sO$.

The operator  $A$ is  the generator of the two-dimensional Heston stochastic volatility process with killing, a degenerate diffusion process well known in mathematical finance and a paradigm for a broad class of degenerate diffusion processes. The coefficients defining $A$ in \eqref{eqn-Heston} are constants assumed throughout this article to obey
\begin{equation}
\label{eqn-coeff}
\sigma \neq 0, \quad -1< \varrho < 1, \quad r \geq 0, \quad q \geq 0, \quad \kappa>0, \quad  \theta>0,
\end{equation}
%COMMENT PF 5.20.2012: Note that we do not require r > 0.
while their financial meaning is described in \cite{Heston1993}. For a detailed introduction to the Heston operator and the obstacle  problem \eqref{eqn-obst1}, we refer the reader to our article  \cite{Daskalopoulos_Feehan_statvarineqheston}.

In this article,  we will establish $C^{1,1}_s$ regularity on $\sO\cup\Gamma_0$ and a priori $C^{1,1}_s$ estimates for the solution $u$ to \eqref{eqn-obst1} on subdomains $U\Subset \sO\cup\Gamma_0$. We use $C^{1,1}_s$ to indicate a weighted H\"older norm and corresponding H\"older space which are distinct from the usual $C^{1,1}$ H\"older norm and H\"older space and which take into account the degeneracy of the operator, $A$, along $y=0$ --- see section \ref{sec:Spaces} for their definition. In the case of a uniformly elliptic operator on a bounded domain, interior $C^{1,1}$ regularity was established by Brezis and Kinderlehrer \cite{Brezis_Kinderlehrer_1973} (see also \cite[Theorem 1.4.1]{Friedman_1982} for a statement of their result and an exposition of their proof), while global $C^{1,1}$ regularity, given a Dirichlet boundary condition, was established by Jensen \cite{Jensen_1980} (see also \cite[Theorem 4.38]{Troianiello} for a statement of his result and an exposition of his proof), recalling that \cite[p. 23]{Friedman_1982}, for a bounded domain $U\subset\R^n$, one has $W^{2,\infty}(U) = C^{1,1}(\bar U)$.
%COMMENT PF5.22.2012: More precise ref from Adams or GT?
To the best of the authors' knowledge, however, our article is the first to establish $C^{1,1}$ regularity of a solution to an obstacle problem defined by a \emph{degenerate} elliptic operator with a
%COMMENT PF10.19.2014: updated next line
boundary degeneracy of the kind in \eqref{eqn-Heston},
despite the importance of this question in applications to American-style option pricing problems for asset prices modeled by stochastic volatility processes \cite{Heston1993}.
%COMMENT PF10.19.2014: added next sentence
While Danielli, Garofalo, and Salsa \cite{Danielli_Garofalo_Salsa_2003} also obtain optimal regularity for a solution to a degenerate obstacle problem, their operator is hypoelliptic and so there is no overlap between the techniques in \cite{Danielli_Garofalo_Salsa_2003} and those of the present article.

For interior $C^{1,1}$ regularity, the case of a uniformly elliptic operator on a bounded domain reduces, by standard methods (see, for example, \cite{Blanchet_2006b, Troianiello}), to the case of the Laplace operator and ingenious techniques introduced by Caffarelli \cite{Caffarelli_jfa_1998} greatly simplify the proof of interior $C^{1,1}$ regularity for solutions to an obstacle problem in this case. We shall adapt Caffarelli's approach in our article but, because of the degeneracy of our operator, $A$, along $y=0$, careful consideration must be given to  the different scaling of the equation near $y=0$. This scaling is reflected in the use of the {\em cycloidal} distance function, $s(\cdot,\cdot)$, defined in section \ref{sec:Spaces} and of weighted Sobolev, H\"older and $C^{1,1}$ spaces. Weighted Sobolev and H\"older spaces have been introduced previously (see, for example, \cite{Daskalopoulos_Feehan_statvarineqheston, DaskalHamilton1998, Feehan_Pop_regularityweaksoln, Koch}) in order to obtain sharp
estimates for solutions to equations  involving degenerate elliptic operators of the form \eqref{eqn-Heston} and their parabolic analogues.
\begin{comment}
%COMMENT PF 8.9.2012: Maybe add below after rechecking for relevance
Related regularity results have been obtained by Fang-Hua Lin \cite{Lin_1989} and his Ph.D. student, Yoshihiro Tonegawa \cite{Tonegawa_1996} in the context of regularity of constant mean curvature hypersurfaces in hyperbolic space.
\end{comment}

%COMMENT PF 5.9.2012: I changed the definition here so B^+ is topologically open. I prefer to avoid conventions implying that domains U or B^+ contain the boundary portion y=0, since that obscures the question of regularity up to y=0 (not an easy result for H^1 or H^2 solutions) and also conflicts with customary usage in GT
Let $B_\rho(Q_0) := \{P\in \R^2: \dist(P,Q_0) < \rho\}$ denote the open ball with center $Q_0=(p_0,q_0)\in\R^2$ and radius $\rho>0$, and set
\begin{equation}
\label{eqn-halfball}
B_\rho^+(Q_0):=B_\rho(Q_0) \cap \HH.
\end{equation}
For a given radius $R_0>0$ and for any $R$ obeying $0<R<R_0$, we denote
\begin{equation}
\label{eq:domains}
V := B_{R_0}^+(Q_0) \quad\hbox{and}\quad U := B_R^+(Q_0).
\end{equation}
Throughout our article, we shall assume that $Q_0=(p_0,q_0) \in \bar\HH$ with $0\leq q_0\leq \Lambda$, for a positive constant\footnote{Note that $A$ in \eqref{eqn-Heston} is uniformly elliptic on $B_{R_0}(Q_0)$ when $q_0>R_0$, and results concerning regularity of solutions to \eqref{eqn-obst2} are then standard \cite{Friedman_1982, Troianiello} and so, for the purpose of this article, we could choose $\Lambda=R_0$ without loss of generality.} $\Lambda$. We shall abuse notation slightly and let $\Gamma_0$ denote the interiors of $\partial\HH\cap\partial\sO$, $\partial\HH\cap\partial V$, or $\partial\HH\cap\partial U$ when we write $\sO\cup\Gamma_0$, $V\cup\Gamma_0$, or $U\cup\Gamma_0$, respectively. The definitions of the weighted H\"older spaces, $C^\alpha_s(\bar V)$, $C^{2+\alpha}_s(\bar V)$ and $C^{1,1}_s(\bar V)$, which we require for the statement of the main result of this article below are collected in section \ref{sec:Spaces}.

\begin{thm}[Optimal regularity]
\label{thm-c11}
Let $R_0 > 0$ and $\Lambda>0$ and suppose $Q_0=(p_0,q_0) \in \bar\HH$ with $0 \leq q_0  \leq \Lambda$. Let $V$ be as in \eqref{eq:domains}. Assume that $u \in H^2(V,\fw) \cap C(\bar V)$ is a solution to the obstacle problem,
%TODO 6.2.2012: do we really need assume u continuous on the closure of V?
%COMMENT PF12.17.2015:It is a harmless assumption
\begin{equation}
\label{eqn-obst2}
\min\{Au-f,u-\psi\} = 0 \quad \hbox{a.e. on } V,
\end{equation}
%TODO PF6.13.2012: Actually, we need $\psi\in C^{2+\alpha}(\bar V)$, no "s"
with $\psi \in C^{2+\alpha}_s(\bar V)$ and
%COMMENT PF5.9.2012: We need f \in C^{\alpha}_s(\bar V), not C^{\alpha}(\bar V)
$f \in C^{\alpha}_s(\bar V)$, for some
%COMMENT PF6.1.2012: V --> V\cup\Gamma_0 in next line
$\alpha \in (0,1)$. If $r > 0$ in \eqref{eqn-Heston}, then $u\in C^{1,1}_s(V\cup\Gamma_0)$
and there is a constant $C$, depending on $\alpha, R_0, \Lambda$, and the coefficients of the operator $A$, such that if $U$ is as in \eqref{eq:domains} with $R=R_0/2$, then
%COMMENT PF5.23.2012: It would be nicer to just allow 0<R<R_0. Again, the proof clearly allows this
\begin{equation}
\label{eqn-c1100}
\| u \|_{C^{1,1}_s(\bar U)} \leq C\left(\| u \|_{C(\bar V)} + \| f \|_{C^\alpha_s(\bar V)} + \| \psi \|_{C^{1,1}(\bar V)}\right).
\end{equation}
\end{thm}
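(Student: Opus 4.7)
The plan is to adapt Caffarelli's approach \cite{Caffarelli_jfa_1998} to the degenerate setting, systematically replacing Euclidean balls, scalings, and the Hessian by their cycloidal analogues. First, reduce to the zero-obstacle case by setting $v := u - \psi$: then $v \geq 0$ and $\min\{Av - F,\, v\} = 0$ a.e.\ in $V$ with $F := f - A\psi$. Because the second-order coefficients of $A$ vanish like $y$ along $\Gamma_0$ and $\psi \in C^{2+\alpha}_s(\bar V)$, the source $F$ lies in $C^\alpha_s(\bar V)$; it then suffices to prove $v \in C^{1,1}_s(U \cup \Gamma_0)$ with the corresponding bound, since $\psi \in C^{1,1}(\bar V) \subset C^{1,1}_s(\bar V)$. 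On the non-contact set $\sN := \{v > 0\} \cap V$, the function $v$ solves the linear degenerate equation $Av = F$, so interior/boundary Schauder-type estimates for the Heston operator in weighted H\"older spaces yield $C^{2+\alpha}_s$ bounds on compact subsets of $\sN \cup \Gamma_0$.

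The heart of the proof is a quadratic growth estimate at free-boundary points: for each $P_0$ in the contact set $\{v = 0\} \cap (V \cup \Gamma_0)$ we want to show
\[
0 \leq v(Q) \leq C\, s(P_0, Q)^2 \qquad \text{whenever } Q \in V \text{ and } s(P_0, Q) < r_0,
\]
for a fixed small cycloidal radius $r_0$ and a constant $C$ controlled by the right side of \eqref{eqn-c1100}. Following Caffarelli's dichotomy, one iterates on cycloidal balls $B^s_{r_k}(P_0)$ with $r_k = 2^{-k} r_0$: either $\sup v \leq C r_k^2$ on $B^s_{r_k}$ and we pass to the next scale, or this supremum is large, in which case comparison with the solution $w$ to $Aw = F$ in a slightly smaller cycloidal ball with $w = v$ on the boundary---combined with a Harnack-type lower bound and a barrier built intrinsically in the cycloidal geometry---forces $w > 0$ in the interior. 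The obstacle condition then gives $v \equiv w$ on that ball, and we are back in the Schauder regime.

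With the quadratic bound in hand, a standard rescaling-plus-Schauder argument produces pointwise bounds on the cycloidal Hessian of $v$: for each $P \in U$ set $r := s(P, \{v = 0\})$, rescale $B^s_r(P)$ to unit size via the cycloidal dilation adapted to $A$, apply the Schauder estimate on the rescaled ball, and pull back. Combined with the estimate on $\sN$ this yields $v \in C^{1,1}_s(U \cup \Gamma_0)$ together with the desired inequality \eqref{eqn-c1100}.

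The principal obstacle is the quadratic growth step at free-boundary points lying on $\Gamma_0$: the degeneracy rules out standard interior arguments, so the dilations, barriers, and comparison functions must all be constructed intrinsically in the cycloidal metric, and the Fichera condition at $\{y = 0\}$ (no prescribed boundary values) precludes any Dirichlet-based reduction to the non-degenerate case. Carrying out Caffarelli's iteration on cycloidal balls that straddle $\{y = 0\}$---and in particular producing the requisite barrier and Harnack-type lower bound for $A$ without boundary data on $\Gamma_0$---will be the main technical hurdle.
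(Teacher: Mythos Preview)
Your overall strategy---adapt Caffarelli's harmonic-replacement idea to obtain quadratic growth at free-boundary points, then combine with weighted Schauder estimates---matches the paper's, and your reformulation of the growth bound as $v(Q)\le Cs(P_0,Q)^2$ neatly unifies what the paper proves as two separate Euclidean-ball statements (Propositions~\ref{prop-sup1} and~\ref{prop-sup2}): quadratic growth $\sup_{B_{\rho y_0/2}(P_0)}(u-\psi)\le Cy_0\rho^2$ away from $\Gamma_0$, and \emph{linear} growth $\sup_{B^+_{\rho/2}(P_0)}(u-\psi)\le C\rho$ near $\Gamma_0$. The paper also makes a different initial reduction: rather than subtracting $\psi$ to reach a zero obstacle, it subtracts a $C^{2+\alpha}_s$ solution of $Lv=f$ to reach $f=0$ while keeping the obstacle (Proposition~\ref{prop-fzero}).

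However, the mechanism you sketch for the growth estimate has a gap. You write that if the supremum of $v$ is large on $B^s_{r_k}(P_0)$, then the replacement $w$ solving $Aw=F$ with $w=v$ on the boundary satisfies $w>0$, and ``the obstacle condition then gives $v\equiv w$.'' But $v\ge w$ (by comparison) together with $w>0$ does not force equality; since $v(P_0)=0$ it would instead yield a contradiction, and even \emph{that} conclusion requires $w(P_0)>0$, which does not follow from a large supremum of boundary data alone because $w$ solves an inhomogeneous equation and need not be nonnegative. The paper bypasses this entirely with a different construction: it introduces an auxiliary function $\zeta$ (respectively $\xi$ near $\Gamma_0$) solving $L\zeta=L(l_{P_0})$ with constant boundary value $10My_0\rho^2$, proves sharp two-sided bounds $My_0\rho^2\le\zeta\le 14My_0\rho^2$ via explicit barriers (Lemmas~\ref{lem-zeta} and~\ref{lem-xi}), and works with $w:=u-l_{P_0}+\zeta$. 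The $\zeta$ term makes $w$ strictly positive; one then splits $w=w_1+w_2$ into an $L$-harmonic part controlled by Harnack (using $w_1(P_0)\le w(P_0)=\zeta(P_0)\le 14My_0\rho^2$) and a remainder $w_2$ vanishing on the boundary, whose interior maximum is forced onto the contact set by the strong maximum principle. There is no iteration. This auxiliary-function device---and the fact that near $\Gamma_0$ the effective barrier is simply the linear function $y-y_0+\rho$, exploiting $L(y)=\kappa(\theta-y)$ rather than anything intrinsically cycloidal---is the main technical idea your proposal is missing.
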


See Remark \ref{rem:Positive_coefficient_r} for comments regarding the hypotheses that $r>0$ and $\psi \in C_s^{2+\alpha}(\bar V)$. Theorem \ref{thm-c11} immediately yields

\begin{cor}[Optimal regularity]
\label{cor-c11}
%COMMENT PF6.13.2012: Could omit \sO bounded in next line and add $u \in C(\bar\sO)$
Let $\sO \subset \HH$ be a bounded domain. Assume that $u \in H_{\mathrm{loc}}^2(\sO\cup\Gamma_0,\fw)\cap C(\sO\cup\Gamma_0)$ is a solution to the obstacle problem \eqref{eqn-obst2} on $\sO$ with
%TODO PF6.13.2012: Actually, we need $\psi\in C^{2+\alpha}(\bar\sO)$, no "s"
$\psi \in C_s^{2+\alpha}(\bar\sO)$ and $f \in C^{\alpha}_s(\bar\sO)$, for some $\alpha \in (0,1)$. Then,
%COMMENT PF6.1.2012: \sO --> \sO\cup\Gamma_0 in next line
$u\in C^{1,1}_s(\sO\cup\Gamma_0)$ and, for each precompact subdomain $\sO'\Subset\sO\cup\Gamma_0$, there is a constant $C$, depending on $\alpha, \sO', \sO$, and the coefficients of the operator $A$, such that
\begin{equation}
\label{eqn-c11domain}
\| u \|_{C^{1,1}_s(\bar\sO')} \leq C\left(\| u \|_{C(\bar\sO)} + \| f \|_{C^\alpha_s(\bar\sO)} + \| \psi \|_{C^{1,1}(\bar\sO)}\right).
\end{equation}
\end{cor}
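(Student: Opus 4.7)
The plan is to derive Corollary \ref{cor-c11} from Theorem \ref{thm-c11} by a straightforward covering argument, with slight care taken near the degenerate boundary $\Gamma_0$.

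First I would exploit the hypothesis $\sO' \Subset \sO\cup\Gamma_0$: because $\bar\sO'$ is compact and contained in $\sO\cup\Gamma_0$, the distance
\[
d := \dist\!\bigl(\bar\sO',\, \partial\sO\setminus\Gamma_0\bigr)
\]
is strictly positive, while the supremum $\Lambda := \sup\{q : (p,q)\in\bar\sO'\}$ is finite since $\bar\sO'\subset\bar\sO$ and $\sO$ is bounded. I would fix $R_0 := d/2$ (or any smaller positive constant), so that for every point $Q_0=(p_0,q_0)\in\bar\sO'$ the half-ball $B_{R_0}^+(Q_0)$ satisfies
\[
B_{R_0}^+(Q_0)\subset \sO,\qquad 0\leq q_0\leq \Lambda,
\]
which are exactly the geometric hypotheses of Theorem \ref{thm-c11}. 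I would also note that, by the continuity hypothesis packaged into the $C(\bar\sO)$-norm on the right side of \eqref{eqn-c11domain} (together with the embedding of $H^2(\sO,\fw)$ into continuous functions up to $\Gamma_0$ established in the companion paper \cite{Daskalopoulos_Feehan_statvarineqheston}), the restriction of $u$ to each such half-ball lies in $H^2(B_{R_0}^+(Q_0),\fw)\cap C(\overline{B_{R_0}^+(Q_0)})$.

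Next, I would apply Theorem \ref{thm-c11} on $V = B_{R_0}^+(Q_0)$ with $U = B_{R_0/2}^+(Q_0)$. The hypotheses $\psi\in C^{2+\alpha}_s(\bar\sO)$ and $f\in C^\alpha_s(\bar\sO)$ restrict to the corresponding weighted H\"older spaces on $\bar V$ with norms bounded by their global counterparts. Theorem \ref{thm-c11} then yields $u\in C^{1,1}_s(\overline{B_{R_0/2}^+(Q_0)})$ together with the estimate
\[
\|u\|_{C^{1,1}_s(\overline{B_{R_0/2}^+(Q_0)})} \leq C\bigl(\|u\|_{C(\bar\sO)} + \|f\|_{C^\alpha_s(\bar\sO)} + \|\psi\|_{C^{1,1}(\bar\sO)}\bigr),
\]
where the constant $C$ depends only on $\alpha$, $R_0$, $\Lambda$, and the coefficients of $A$ — and in particular is independent of the base point $Q_0\in\bar\sO'$.

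Finally, since $\bar\sO'\subset \bigcup_{Q_0\in\bar\sO'} B_{R_0/2}^+(Q_0)$ and $\bar\sO'$ is compact, I would extract a finite subcover $\{B_{R_0/2}^+(Q_i)\}_{i=1}^N$ and conclude $u\in C^{1,1}_s(\sO\cup\Gamma_0)$ on a neighborhood of $\bar\sO'$, with the global bound \eqref{eqn-c11domain} obtained by taking the maximum of the finitely many local estimates. There is no substantive analytic obstacle in this argument: the only point requiring mild attention is verifying that $C^{1,1}_s$-control on a finite union of half-balls produces $C^{1,1}_s$-control on $\bar\sO'$, which follows from the local nature of the weighted seminorms defined via the cycloidal distance $s(\cdot,\cdot)$. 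The substantive work — in particular the handling of the degeneracy at $\{y=0\}$ and Caffarelli-style truncation arguments in the weighted framework — has already been done in Theorem \ref{thm-c11}.
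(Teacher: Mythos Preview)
Your proposal is correct and matches the paper's approach: the paper gives no explicit proof, stating only that Theorem \ref{thm-c11} ``immediately yields'' the corollary, and your covering argument is precisely the standard way to make this immediate. One small correction: the paper explicitly notes (just after the definition of $H^2(\sO,\fw)$) that functions in $H^2(\sO,\fw)$ need \emph{not} be continuous up to $\bar\Gamma_0$, so your appeal to a Sobolev embedding up to $\Gamma_0$ is misplaced; the continuity of $u$ on each $\bar V$ should instead be justified, as you also indicate, by the implicit finiteness of $\|u\|_{C(\bar\sO)}$ on the right-hand side of \eqref{eqn-c11domain}.
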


\begin{rmk}[Hypotheses on regularity of the solution]
\label{rmk:thm-c11_continuity}
As we note following Definition \ref{defn:Weighted_Sobolev_spaces}, functions that belong to the weighted Sobolev space, $H^2(V,\fw)$, are continuous up to the boundary portion, $\Gamma_1$, but need not be continuous up to the boundary portion, $\bar\Gamma_0$, and this is the reason for our hypothesis in Theorem \ref{thm-c11} that the function $u$ belong to both $H^2(V,\fw)$ and $C(\bar V)$ and similarly in Corollary \ref{cor-c11}. The H\"older regularity results \cite[Theorem 1.20 and Corollary 1.21]{Feehan_Pop_regularityweaksoln} suggest that these continuity hypotheses may be relaxed with the aid of interior versions of those results.
\end{rmk}

\begin{rmk}[Regularity up to the corners where $\Gamma_0$ and $\Gamma_1$ meet]
\label{rmk:thm-c11_corners}
It is interesting to note that, even without an obstacle, it is difficult in general to establish higher-order regularity up to the corner points for a solution $u$ to the boundary-degenerate elliptic Heston equation and indeed this is not asserted by in Theorem \ref{thm-c11} or Corollary \ref{cor-c11}. In the article \cite{Feehan_Pop_regularityweaksoln}, Pop and the second author applied a version of Moser iteration for weak solutions to the boundary-degenerate elliptic Heston equation and obstacle problems to prove that a weak solution $u$ is $C_s^\alpha$-H\"older continuous, for \emph{some} $\alpha \in (0,1)$, and continuous in the usual sense up to the corners where $\Gamma_0$ and $\Gamma_1$ meet. This issue concerning regularity at the corners where $\Gamma_0$ and $\Gamma_1$ meet is discussed further in two other articles with Pop \cite{Feehan_Pop_elliptichestonschauder, Feehan_Pop_higherregularityweaksoln}.
\end{rmk}

\begin{rmk}[Regularity of the obstacle function]
One can speculate as to whether it might be possible to improve the estimates in Theorem \ref{thm-c11} and Corollary \ref{cor-c11} by replacing $\|\psi\|_{C^{1,1}(\bar V)}$ with $\|\psi\|_{C_s^{1,1}(\bar V)}$ on the right-hand side. It is not clear that simple refinements of our proofs would yield such an improvement, but that does not preclude the possibility that a more sophisticated proof could succeed.
\end{rmk}

Our proof of Theorem \ref{thm-c11} proceeds by adapting ideas of Caffarelli in \cite{Caffarelli_jfa_1998}; see also an exposition by Petrosyan in \cite{Petrosyan_2011_msrilectures}. However, because  our operator is degenerate, careful consideration must be given to the difference of the scaling of the equation in regions  close ($y$ small) and  away ($y \geq \rho >0$) from the portion of the boundary, $\{y=0\}\cap\partial V$, where $A$ becomes degenerate.

\subsection{Generalizations}
When the main result of our article (Theorem \ref{thm-c11}) is combined with Jensen's global $C^{1,1}$ regularity theorem \cite{Jensen_1980}, we see that $H^2(\sO,\fw)$ solutions, $u$, to \eqref{eqn-obst1} actually belong to
%COMMENT PF6.13.2012: \sO --> \sO\cup\Gamma_0 in next line
$C^{1,1}_s(\sO\cup\Gamma_0)\cap C^{1,1}(\sO\cup\Gamma_1)$ under hypotheses on $f$ and $\psi$ analogous to those stated in Theorem \ref{thm-c11}. By making further use of methods in \cite{Feehan_Pop_elliptichestonschauder}, it should follow that $u\in C^{1,1}_{s,\loc}(\bar\sO)$. Moreover, there is good reason to believe that  results on the regularity  of the free boundary for the obstacle problem defined by a non-degenerate elliptic or parabolic operator extend to degenerate operators of the kind considered in this article; see \cite{Petrosyan_Shagholian_Uraltseva} and references therein for the non-degenerate elliptic case and \cite{Laurence_Salsa_2009, Nystrom_2007} and references therein for the non-degenerate parabolic case. We shall leave consideration of these extensions to our future articles.

The solution, $u$, to \eqref{eqn-obst1} can be interpreted as the value function for a perpetual American-style option with payoff function, $\psi$ \cite{KaratzasShreve1998}.
%TODO PF6.13.2012: Actually, we need $\psi\in C^{2+\alpha}(\bar V)$, no "s"
The $C^{2+\alpha}_s(\bar V)$ regularity property assumed for the obstacle function, $\psi$, in the statement of Theorem \ref{thm-c11} does not reflect the more typical Lipschitz regularity for $\psi$ encountered in applications to mathematical finance, such as $\psi(x,y) = \max\{E-e^x,0\}$, where $E$ is a positive constant, in the case of a put option. Nevertheless, simple examples in this context \cite[\S 8.3]{Shreve2} and results of \cite{Laurence_Salsa_2009, Nystrom_2007} suggest that the solution, $u$, should nevertheless have the optimal $C^{1,1}_s$ regularity even when $\psi = \max\{E-e^x,0\}$. Again, we shall leave consideration of this question to our future articles.

We have chosen, in this article, to work with our model, the Heston operator $A$, because of its relevance to mathematical finance and reliance on results in our previous work \cite{Daskalopoulos_Feehan_statvarineqheston} and that of Feehan and Pop \cite{Feehan_Pop_higherregularityweaksoln, Feehan_Pop_regularityweaksoln, Feehan_Pop_elliptichestonschauder, PopThesis}. However, we expect that the $C^{1,1}_s$ regularity result and a priori estimate in Theorem \ref{thm-c11} may be easily  generalized to  higher dimensions and degenerate elliptic operators on  $\R^{n-1} \times (0,\infty)$ with variable coefficients,
$$
Au =  -x_n a_{ij} u_{x_ix_j} - b_iu_{x_i} + cu,
$$
under the assumptions that $(a_{ij})$ is strictly elliptic,  $b_n \geq \nu >0$,  for some constant $\nu >0$, and $c \geq 0$ and all coefficients are H\"older continuous of class $C^{\alpha}_s(\bar V)$, for some $\alpha \in (0,1)$. See \cite{Feehan_Pop_mimickingdegen_pde, Feehan_Pop_mimickingdegen_probability} for an analysis with applications to probability theory based on parabolic operators of this type.

\subsection{Outline of the article}
\label{subsec:Guide}
For the convenience of the reader, we provide a brief outline of the article. We begin in \S \ref{sec:Spaces} by reviewing our definitions of weighted H\"older spaces \cite{DaskalHamilton1998} and weighted Sobolev spaces \cite{Daskalopoulos_Feehan_statvarineqheston} which we shall need for this article. In \S \ref{sec:Regularity}, we review results from \cite{Feehan_maximumprinciple, Feehan_Pop_regularityweaksoln, Feehan_Pop_higherregularityweaksoln, Feehan_Pop_elliptichestonschauder} concerning existence, uniqueness, and regularity of solutions to the elliptic Heston equation on bounded subdomains of the upper half-plane; see also \cite{DaskalHamilton1998}. In \S \ref{sec:SupBounds}, we develop the key pointwise growth estimates (see Propositions \ref{prop-sup1} and \ref{prop-sup2}) for solutions to the obstacle problem for the elliptic Heston operator. We conclude in \S \ref{sec:ProofMainTheorem} with the proof of our main result, Theorem \ref{thm-c11}.

\subsection{Notation}
Throughout the rest of the article  we will set $Lu: =-Au$, where $A$ is given by \eqref{eqn-Heston} and we work with $L$ instead to facilitate comparisons with the methods of Caffarelli \cite{Caffarelli_jfa_1998} and the sign conventions therein. The operator $L$ is then given by
\begin{equation}
\label{eqn-oper}
Lu = \frac{y}2\left( u_{xx} + 2 \varrho  \sigma u_{xy} + \sigma^2 u_{yy} \right) +
\left(r  - q-   \frac y2\right)u_x + \kappa\left(\theta - y\right)u_y - ru,
\end{equation}
with coefficients which satisfy the assumption \eqref{eqn-coeff}.

We let $C=C(*,\ldots,*)$ denote a constant which depends at most on the quantities appearing on the parentheses. In a given context, constants denoted by $C, C', \cdots$ and so on may have different values depending on the same set of arguments and may increase from one inequality to the next. Constants with values denoted by $K, K',\cdots$ and so on are reserved for quantities which remain fixed. We let $C(L)$ denote a constant which may depend on one or more of the constant coefficients of the operator $L$ (that is, $q,r,\kappa,\theta,\varrho,\sigma$).

\subsection{Acknowledgments}
We are grateful to Arshak Petroysan for sharing Mathematica code from his lecture notes \cite{Petrosyan_2011_msrilectures} and which we adapted to create the figures in this article. We are also grateful to Camelia Pop for many helpful conversations. We are grateful to the anonymous referees for their careful reading of our manuscript and their comments.

\section{Weighted Sobolev and H\"older spaces}
\label{sec:Spaces}
In \cite{Daskalopoulos_Feehan_statvarineqheston} the authors defined the following weighted Sobolev spaces of functions on a possibly unbounded domain $\sO \subset \HH$.

\begin{defn}[Weighted Sobolev spaces]
\label{defn:Weighted_Sobolev_spaces}
Let $L^2(\sO,\fw)$ denote the Hilbert space of Borel measurable functions, $u : \sO \to \R$, such that
$$
\|u\|_{L^2(\sO,\fw)}:= \left(\int_\sO  u^2 \, \fw\,dx\,dy \right)^{1/2} < \infty,
$$
with weight function $\fw(x,y) := y^{\beta-1}e^{-\gamma|x|-\mu y}$, for $(x,y)\in\HH$, where $\beta := 2\kappa\theta/\sigma^2$ and $\mu := 2\kappa/\sigma^2$ and the constant $\gamma >0$ depends only on the coefficients of $A$. We define the vector spaces,
\begin{align*}
H^1(\sO,\fw)
&:= \left\{u \in L^2(\sO,\fw):\  y^{1/2}|Du|,\ (1+y)^{1/2} u \in L^2(\sO,\fw)\right\},
\\
H^2(\sO,\fw)
&:= \left\{u \in L^2(\sO,\fw):\  y|D^2u|,\ (1+y) |Du|,\ (1+y)^{1/2} u \in L^2(\sO,\fw)\right\},
\end{align*}
where $Du=(u_x,u_y)$ and $D^2u = (u_{xx}, u_{xy}, u_{yx}, u_{yy})$ are defined in the sense of distributions.
\end{defn}

When equipped with the norm,
$$
\|u\|_{H^2(\sO,\fw)} := \left(\int_\sO  \left(y^2|D^2u|^2 + (1+y)^2|Du|^2 + (1+y)u^2\right)\, \fw\,dx\,dy \right)^{1/2},
$$
one finds that $H^2(\sO,\fw)$ is a Hilbert space and, noting that $\sO$ has dimension two, $H^2(\sO,\fw) \subset C(\sO\cup\Gamma_1)$ via the embedding theorem for standard, unweighted Sobolev spaces \cite{Daskalopoulos_Feehan_statvarineqheston}, but elementary examples show that functions in $H^2(\sO,\fw)$ need not be continuous up to $\bar\Gamma_0$. We say that $u \in H_{\loc}^2(\sO \cup \Gamma_0)$, if $u \in H^2(U)$ for any subdomain $U \Subset \sO \cup \Gamma_0$.

We next define weighted $C^{1,1}$ and H\"older norms on a bounded domain $\sO \subset \HH$.

\begin{defn} [$C^{1,1}_s$ norm and Banach space]
\label{defn-c11}  We say that $u \in C^{1,1}_s(\bar \sO)$ if $u$ belongs to  $C^{1,1}(\sO) \cap C^1(\bar \sO)$ and
%COMMENT PDF 5.9.2012: By defn, \sO = \sO\cap\{y>0\} and L^\infty(\bar \sO) = L^\infty(\sO)
$$
\| u \|_{C^{1,1}_s(\bar \sO)} := \| yD^2 u \|_{L^\infty(\sO)} +  \| D u \|_{C(\bar\sO)} +  \|  u \|_{C(\bar\sO)}  < \infty.
$$
Also, we say  that $u \in C^{1,1}_s(\sO \cup \Gamma_0)$, if $u \in C^{1,1}_s(\bar U)$ for any subdomain $U \Subset \sO \cup \Gamma_0$.
\end{defn}

We recall the definition of the distance function, $s(\cdot,\cdot)$ on $\HH$, equivalent to the distance function defined by the \emph{cycloidal metric}, $y^{-1}(dx^2+dy^2)$ on $\HH$, and introduced by Daskalopoulos and Hamilton in \cite{DaskalHamilton1998} and by H.  Koch in \cite{Koch},
\begin{equation}
\label{eqn-dist}
s(z,z_0) := \frac{|x-x_0| + |y-y_0|}{\sqrt{y} + \sqrt{y_0} + \sqrt{|x-x_0| + |y-y_0|}}, \quad \forall z=(x,y), z_0=(x_0,y_0)\in \HH.
\end{equation}
This is the natural metric for our degenerate equation; see \cite{DaskalHamilton1998} for a discussion. The following weighted H\"older spaces were introduced by Daskalopoulos and Hamilton in \cite{DaskalHamilton1998}.

\begin{defn}[$C^\alpha_s$ and $C^{2+\alpha}_s$ norms and Banach spaces]
\label{defn:DHspaces}
Given $\alpha \in (0,1)$, we say that $u\in C^\alpha_s(\bar\sO)$ if $u \in C(\bar\sO)$ and
$$
\|u\|_{C^\alpha_s(\bar \sO)} := \| u \|_{C(\bar\sO)} + \sup_{\stackrel{z_1,z_2\in\sO}{z_1\neq z_2}}\frac{|u(z_1)-u(z_2)|}{s(z_1,z_2)^\alpha} < \infty.
$$
We say that $u \in C^{2+\alpha}_s(\bar\sO)$ if $u$ has continuous first and second derivatives, $Du, D^2u$, in $\sO$, and $Du, yD^2u$ extend continuously up to the boundary, $\partial\sO$, and the extensions belong to $C^\alpha_s(\bar\sO)$. We denote
$$
\| u \|_{C^{2+\alpha}_s(\bar\sO)} :=  \| u \|_{C^{\alpha}_s(\bar\sO)} + \| Du \|_{C^{\alpha}_s(\bar\sO)} + \| yD^2 u \|_{C^{\alpha}_s(\bar\sO)}.
$$
We say that\footnote{In \cite[p. 901]{DaskalHamilton1998}, when defining the spaces $C^\alpha_s(\sA)$ and $C^{2+\alpha}_s(\sA)$, it is assumed that $\sA$ is a compact subset of the \emph{closed} half-plane, $\{y\geq 0\}$.} $u \in C^\alpha_s(\sO \cup \Gamma_0)$ if $u \in C^\alpha_s(\bar U)$ for every subdomain $U \Subset \sO \cup \Gamma_0$ and similarly that
$u \in C^{2+\alpha}_s(\sO \cup \Gamma_0)$  if $u \in C^{2+\alpha}_s(\bar U)$ for every subdomain $U \Subset \sO \cup \Gamma_0$.
\end{defn}

%COMMENT PF6.13.2012: Replaced next line
%One can show that $C^{2+\alpha}_s(\bar\sO)$ is a Banach space when equipped with the norm in Definition \ref{defn:DHspaces}.
One can show that $C^{1,1}_s(\bar\sO)$, $C^\alpha_s(\bar\sO)$, and $C^{2+\alpha}_s(\bar\sO)$ are Banach spaces when equipped with the indicated norms.

For any subset $S \subset \HH$, we let $C_b(S)$ denote the vector space of bounded, continuous functions on $S$.

%COMMENT PF6.1.2012: Removed redundant $\Lambda$ in this remark
\begin{rem} On any bounded subdomain $U \subset \HH$ we have,
\begin{equation}\label{eqn-cnorms}
c\,  |z-z_0| \leq  s( z,z_0) \leq \sqrt {|z-z_0|},
\end{equation}
for some positive constant $c:=c(\mbox{diam} (U))$ depending only on the diameter of  $U$. Hence, $C^\alpha(\bar U) \subset C^\alpha_s(\bar U) \subset C^{\alpha/2} (\bar U)$.
\end{rem}

\section {Schauder existence, uniqueness, and regularity results}
\label{sec:Regularity}
We collect some known results for solutions to the degenerate elliptic equation,
\begin{equation}
\label{eqn-dee}
Lv =f \quad \mbox{on }  V,
\end{equation}
where $V$ is as in \eqref{eq:domains}. These results will be used in the proof of Theorem \ref{thm-c11}. Theorems \ref{thm-dp-C0BoundaryData-apriori} and \ref{thm-dp-C0BoundaryData-existence} are proved in \cite{Feehan_Pop_elliptichestonschauder} and may be viewed as analogues of \cite[Theorems 6.2, 6.6, 6.13 and 6.14]{GilbargTrudinger} and a generalization of \cite[Theorem I.1.1]{DaskalHamilton1998}.

\begin{thm}[A priori Schauder interior estimate]
\label{thm-dp-C0BoundaryData-apriori}
(See \cite[Theorem 1.1]{Feehan_Pop_elliptichestonschauder}.)
Given  $f \in C^{\alpha}_s(V\cup\Gamma_0)$, where $V$ is as in \eqref{eq:domains}, and
%COMMENT PF6.1.2012: V --> V\cup\Gamma_0 in next line
a solution $u \in C^{2+\alpha}_s(V\cup\Gamma_0)$ to
%COMMENT PDF 5.20.2012: Enough to have u bounded
$$
Lv=f \quad\hbox{on } V,
$$
there is a constant, $C$, depending at most on $\alpha,R,R_0,\Lambda$, and the coefficients of $L$, such that if $U$ is as in \eqref{eq:domains}, then
\begin{equation}
\label{eqn-dpC0-schauder}
\|u\|_{C^{2+\alpha}_s(\bar U) } \leq C\left(\|u\|_{C(\bar V)} + \|f\|_{C^\alpha_s(\bar V)}\right).
\end{equation}
\end{thm}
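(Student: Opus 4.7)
The plan is to adapt the classical Campanato--Schauder approach to the degenerate setting of $L$, exploiting the fact that the cycloidal distance $s(\cdot,\cdot)$ encodes the natural scaling of $L$ near $\{y=0\}$. The key observation is that for the constant-coefficient model operator obtained by freezing the variable coefficients of $L$ at a boundary point $(x_0,0)\in\Gamma_0$, namely
\[
L_0 v = \frac{y}{2}\bigl(v_{xx} + 2\varrho\sigma v_{xy} + \sigma^2 v_{yy}\bigr) + (r-q)v_x + \kappa\theta v_y - rv,
\]
the anisotropic dilation $(x,y)\mapsto(\lambda^2 x, \lambda^2 y)$ preserves the principal and drift parts up to the common factor $\lambda^{-2}$, while the cycloidal distance $s$ scales linearly. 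This makes $C^{2+\alpha}_s$ the correct scale-invariant H\"older scale for $L_0$, and therefore for $L$ after perturbation.

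First I would establish the interior estimate for the frozen operator $L_0$ on cycloidal half-balls. At an interior point $z_0=(x_0,y_0)$ with $y_0 \geq c\rho$ for some fixed $c>0$, the Euclidean rescaling by $\rho$ renders $L_0$ uniformly elliptic on a unit ball, and standard $C^{2+\alpha}$--Schauder theory applies; one then uses $s\sim|\cdot|$ in this regime to convert the bound to the weighted form. At a boundary point $z_0\in\Gamma_0$ a genuinely degenerate analysis is needed: one can either invoke the explicit transition kernel of the underlying Heston semigroup (a Cox--Ingersoll--Ross process in $y$ coupled to a drift-diffusion in $x$), or give a Campanato-type characterization of $C^\alpha_s$ via oscillation decay on cycloidal half-balls, proving the decay by comparing $v$ to homogeneous solutions of $L_0 w = 0$ whose structure in $y$ is governed by confluent hypergeometric (Kummer) functions. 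I expect this step to be the main obstacle, since one must verify that the oscillations of $Dv$ and $yD^2 v$ decay at the sharp rate $r^\alpha$ measured in the cycloidal metric rather than the Euclidean one, which requires fine control of the model Green's function up to $\Gamma_0$.

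Second, I would pass from $L_0$ to the variable-coefficient operator $L$ by a standard freezing-of-coefficients perturbation argument. The only coefficients of $L$ that vary with position are the factors $-y/2$ and $-\kappa y$ multiplying the first-order terms, and these are H\"older continuous in the cycloidal metric thanks to the embedding $C^\alpha(\bar V)\subset C^\alpha_s(\bar V)$. On a cycloidal half-ball of radius $\rho$ around $z_0$, the error $(L-L_0)v$ is therefore bounded in $C^\alpha_s$ by $C\rho^\alpha\|v\|_{C^{2+\alpha}_s}$, and choosing $\rho$ sufficiently small allows this contribution to be absorbed into the left-hand side, producing a local estimate on $B_{\rho/2}^+(z_0)$.

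Finally, I would assemble the interior estimate on $\bar U$ by a covering argument: cover $\bar U$ by finitely many cycloidal half-balls of radii comparable to $R_0-R$ and to the cycloidal distance to $\partial V\setminus\Gamma_0$, and sum the local estimates using the equivalence of cycloidal and Euclidean norms away from $\Gamma_0$. A weighted interpolation inequality in the $C^{2+\alpha}_s$-scale then absorbs the intermediate seminorms $\|Dv\|_{C(\bar V)}$ and $\|yD^2 v\|_{C(\bar V)}$ against $\varepsilon\|v\|_{C^{2+\alpha}_s(\bar V)}+C_\varepsilon\|v\|_{C(\bar V)}$, delivering \eqref{eqn-dpC0-schauder} with a constant $C$ depending only on $\alpha,R,R_0,\Lambda$ and the coefficients of $L$.
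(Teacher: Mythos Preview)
The paper does not prove this theorem; it is quoted without proof from \cite{Feehan_Pop_elliptichestonschauder}, as the citation immediately following the theorem label indicates. Theorems \ref{thm-dp-C0BoundaryData-apriori}--\ref{thm-dp-existence} are all imported in this way and used as black boxes in the remainder of the article. So there is no ``paper's own proof'' to compare against here.

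That said, your outline is the standard Campanato--Schauder template and is broadly the approach taken in the cited reference (and earlier in \cite{DaskalHamilton1998} for a related model). A few points of caution. First, your freezing argument is slightly miscalibrated: the variable coefficients you identify ($-y/2$ on $u_x$ and $-\kappa y$ on $u_y$) are \emph{first-order} terms, and on a cycloidal ball of radius $\rho$ centred at $(x_0,0)$ one has $y\lesssim\rho^2$, so their contribution is actually $O(\rho^2)$ in $C^\alpha_s$, better than you claim; however, you also need to freeze the second-order coefficient $y$ at the centre, and for interior balls with $y_0>0$ the relevant smallness comes from the ratio $\rho^2/y_0$ being small in the cycloidal regime. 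Second, the genuine work---which you correctly flag as the obstacle---is the $C^{2+\alpha}_s$ estimate for the constant-coefficient model $L_0$ on a half-ball abutting $\Gamma_0$. In \cite{DaskalHamilton1998} and \cite{Feehan_Pop_elliptichestonschauder} this is done not via explicit Green's functions or Kummer expansions but by a direct Campanato-type oscillation-decay argument adapted to the cycloidal metric, together with a Caccioppoli-type inequality for $L_0$; your proposal leaves this step as a black box, which is fair for a sketch but is where essentially all the analysis lives.
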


\begin{thm}[Existence of a solution to a Dirichlet problem with continuous boundary data]
\label{thm-dp-C0BoundaryData-existence}
(See \cite[Corollary 1.13]{Feehan_Pop_elliptichestonschauder}, \cite[Theorem 1.1]{Feehan_classical_perron_elliptic}.)
%COMMENT PF6.1.2012: V --> V\cup\Gamma_0 in next line
Given  $f \in C^{\alpha}_s(V\cup\Gamma_0)\cap C_b(V)$, where $V$ is as in \eqref{eq:domains}, and $g \in C_s^{2+\alpha}(V\cup \Gamma_0)\cap C_b(V\cup\Gamma_1)$,
%COMMENT PF6.1.2012: V --> V\cup\Gamma_0 in next line
there exists a unique solution $u\in  C^{2+\alpha}_s(V\cup\Gamma_0)\cap C_b(V\cup\Gamma_1)$ to the Dirichlet problem
\begin{equation}
\label{eq:dp}
Lv=f \quad\hbox{on } V \quad\hbox{and}\quad v=g \quad\hbox{on }\HH\cap\partial V.
\end{equation}
\end{thm}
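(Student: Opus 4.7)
The plan is to establish uniqueness from a weak maximum principle for the degenerate operator $L$, and then obtain existence by uniformly approximating the continuous boundary data $g$ by smooth data, combining the a priori interior Schauder estimate in Theorem \ref{thm-dp-C0BoundaryData-apriori} with the existence theory in weighted Sobolev spaces from \cite{Daskalopoulos_Feehan_statvarineqheston, Feehan_Pop_regularityweaksoln}, and passing to the limit.

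Uniqueness is straightforward: if $v_1, v_2$ are two solutions agreeing on $\HH\cap\partial V$, then $w:=v_1-v_2 \in C^{2+\alpha}_s(V\cup\Gamma_0)\cap C(\bar V)$ solves $Lw=0$ on $V$ and vanishes on $\HH\cap\partial V$. Because $\kappa\theta>0$, the Fichera condition along $\bar\Gamma_0$ makes it impossible for an interior extremum to migrate onto $\Gamma_0$, and the weak maximum principle for $L$ developed in \cite{Feehan_maximumprinciple} (note $r\geq 0$) forces $w\equiv 0$.

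For existence, choose $g_n \in C^{2+\alpha}(\overline{\HH\cap\partial V})$ with $g_n \to g$ uniformly on $\overline{\HH\cap\partial V}$. For each $n$, the weighted Sobolev existence results of \cite{Daskalopoulos_Feehan_statvarineqheston, Feehan_Pop_regularityweaksoln} yield $u_n\in H^2(V,\fw)$ with $Lu_n=f$ on $V$ and $u_n=g_n$ on $\HH\cap\partial V$, and the higher Schauder regularity theory of \cite{Feehan_Pop_higherregularityweaksoln, Feehan_Pop_elliptichestonschauder} bootstraps this to $u_n \in C^{2+\alpha}_s(V\cup\Gamma_0)\cap C(\bar V)$. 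The weak maximum principle furnishes the uniform bound
\[
\|u_n\|_{C(\bar V)} \leq \|g_n\|_{C(\overline{\HH\cap\partial V})} + C\|f\|_{C(\bar V)},
\]
and Theorem \ref{thm-dp-C0BoundaryData-apriori} then provides uniform $C^{2+\alpha}_s(\bar{V'})$ bounds on $\{u_n\}$ for every subdomain $V' \Subset V\cup\Gamma_0$. A diagonal Arzel\`a--Ascoli argument extracts a subsequence converging in $C^{2+\alpha'}_s(\bar{V'})$ for each $\alpha'<\alpha$ to a limit $u \in C^{2+\alpha}_s(V\cup\Gamma_0)$ which inherits $Lu=f$ on $V$.

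The main obstacle is to upgrade convergence to continuity up to $\HH\cap\partial V$, so that $u \in C(\bar V)$ and $u=g$ on $\HH\cap\partial V$. On a neighborhood of any $z_0 \in \HH\cap\partial V$ that stays a fixed positive distance from $\{y=0\}$, the operator $L$ is uniformly elliptic, so standard elliptic barriers (e.g., exterior-sphere barriers for the Laplacian, transferred to $L$ via a bounded multiplicative correction) yield a modulus of continuity for $u_n$ at $z_0$ that is independent of $n$, and together with $g_n \to g$ uniformly this gives the desired boundary value at $z_0$. The delicate points are the corners where $\overline{\HH\cap\partial V}$ meets $\bar\Gamma_0$: there the uniform-elliptic barriers degenerate, and one must construct an auxiliary barrier scaled according to the cycloidal metric $s(\cdot,\cdot)$ in \eqref{eqn-dist}, exploiting the inward drift $\kappa(\theta-y)\partial_y$ produced by $\kappa\theta>0$ near $\Gamma_0$ (cf.\ \cite{DaskalHamilton1998}). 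This patched barrier, combined with the equicontinuity already available on $V\cup\Gamma_0$ from the uniform $C^{2+\alpha}_s$ bounds, controls $u_n$ uniformly at the corner points and yields $u\in C(\bar V)$ with $u|_{\HH\cap\partial V}=g$, completing the proof.
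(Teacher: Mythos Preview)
The paper does not contain a proof of this theorem: it is quoted verbatim from \cite{Feehan_Pop_elliptichestonschauder} (see the sentence immediately preceding Theorem \ref{thm-dp-C0BoundaryData-apriori}), so there is no ``paper's own proof'' to compare your proposal against.

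As an independent attempt, your outline follows the standard approximation/Perron template and the broad strokes are reasonable, but two points deserve scrutiny. First, the supremum estimate you write,
\[
\|u_n\|_{C(\bar V)} \leq \|g_n\|_{C(\overline{\HH\cap\partial V})} + C\|f\|_{C(\bar V)},
\]
does not follow from the weak maximum principle when $r=0$ (constants are then in the kernel of $L$); the paper itself flags this issue in the second Remark after Proposition \ref{prop-c2a}, where the analogous bound is derived only under the extra hypothesis $r>0$. You can repair this by first subtracting a fixed $C^{2+\alpha}_s(\bar V)$ solution of $Lv_0=f$ with zero boundary data (from Theorem \ref{thm-dp-existence}, which however needs $f\in C^\alpha_s(\bar V)$, a stronger hypothesis than you have), or by working directly with differences $u_n-u_m$, which satisfy the homogeneous equation and for which the weak maximum principle does give a Cauchy estimate in $C(\bar V)$ without any sign condition on $r$.

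Second, and more seriously, the corner barrier at the two points of $\overline{\HH\cap\partial V}\cap\bar\Gamma_0$ is the entire crux of the result, and you have only gestured at it (``an auxiliary barrier scaled according to the cycloidal metric \ldots exploiting the inward drift''). Constructing an explicit local supersolution there that (i) vanishes at the corner, (ii) dominates a given modulus of continuity of $g$, and (iii) is genuinely a supersolution for $L$ uniformly in the approximation parameter, is nontrivial precisely because the second-order part of $L$ degenerates while the boundary portion $\HH\cap\partial V$ is transversal to $\{y=0\}$. Without an explicit barrier here your argument is incomplete; this is exactly the technical content that \cite{Feehan_Pop_elliptichestonschauder} supplies.
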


%TODO Rewrite so independent of \cite{Feehan_Pop_higherregularityweaksoln}
The preceding results easily imply the following consequence when combined with a regularity theorem from \cite{Feehan_Pop_higherregularityweaksoln} and a maximum principle estimate from \cite{Feehan_maximumprinciple}.

%COMMENT PF5.5.2012: Also needed to assert that H^2 solution is actually C^{2+\alpha}_s.
\begin{prop}[Regularity and interior Schauder estimate]
\label{prop-c2a}
%COMMENT PF6.1.2012: V --> V\cup\Gamma_0 in next line
Let $f \in C^\alpha_s(V\cup\Gamma_0)$ and let $v \in H^2(V, \fw)$ be a solution to
$$
Lv=f \quad\hbox{a.e. on } V.
$$
%COMMENT PF6.1.2012: V --> V\cup\Gamma_0 in next line
Then, $v \in C^{2+\alpha}_s(V\cup\Gamma_0)$.
%TODO PF 5.9.2012: We may need to adjust what we mean by $C^{2+\alpha}_s(V)$ to account for corners
%COMMENT PF12.17.2015: Not relevant; interior estimate only
Moreover, there is a constant, $C$, depending at most on $\alpha,R,R_0,\Lambda$, and the coefficients of $L$, such that if $U$ is as in \eqref{eq:domains}, then
\begin{equation}
\label{eqn-schauder}
\| v \|_{C^{2+\alpha}_s(\bar U)} \leq C\left (\| v \|_{C(\bar V)} + \| f \|_{C^{\alpha}_s(\bar V)} \right).
\end{equation}
\end{prop}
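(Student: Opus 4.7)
The strategy is a standard bootstrapping-by-uniqueness argument: construct, on suitable half-ball subdomains, a classical $C^{2+\alpha}_s$ solution with the same boundary data as $v$ and then identify it with $v$ via a maximum principle. Fix $Q \in V \cup \Gamma_0$ and choose a half-ball $V' \Subset V \cup \Gamma_0$ containing $Q$ (if $Q$ lies in the interior of $V$, an ordinary ball works by the classical non-degenerate Schauder theory, so the interesting case is $Q$ near $\Gamma_0$). By Theorem \ref{thm-dp-C0BoundaryData-existence} applied on $V'$, there exists a unique $w \in C^{2+\alpha}_s(V' \cup \Gamma_0)\cap C(\bar V')$ solving $Lw=f$ on $V'$ with $w = v|_{\HH\cap\partial V'}$; this uses the continuity of $v$ along the non-degenerate portion of $\partial V'$.

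The continuity of $v$ needed to define these boundary data must be established separately, since $H^2(\sO,\fw)$ membership does not by itself give continuity up to $\bar\Gamma_0$. Here one invokes the higher regularity result for weak solutions from \cite{Feehan_Pop_higherregularityweaksoln}: the combination $v\in H^2(V,\fw)$ and $Lv=f\in C^\alpha_s(V\cup\Gamma_0)$ upgrades $v$ to a function that is continuous on $V\cup\Gamma_0$, so $v\in C(\overline{V'})$ for any $V'\Subset V\cup\Gamma_0$. With this in hand, both $v$ and $w$ lie in $H^2(V',\fw)\cap C(\bar V')$, satisfy $L(v-w)=0$ a.e.\ on $V'$, and agree on $\HH\cap\partial V'$. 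Because $\kappa\theta>0$ (so that no boundary condition is needed along the degenerate face), the maximum principle from \cite{Feehan_maximumprinciple} yields $v=w$ on $V'$. Hence $v$ inherits the $C^{2+\alpha}_s(V'\cup\Gamma_0)$ regularity of $w$, and since $V'$ was arbitrary, $v \in C^{2+\alpha}_s(V\cup\Gamma_0)$.

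For the quantitative Schauder estimate, insert an intermediate half-ball $U \Subset V'' \Subset V$ with radii comparable to $R$, $R_0$. By the previous paragraph $v \in C^{2+\alpha}_s(V''\cup\Gamma_0)\cap C(\bar V)$ and solves $Lv = f$ classically, so Theorem \ref{thm-dp-C0BoundaryData-apriori} applied on $V''$ gives
\[
\|v\|_{C^{2+\alpha}_s(\bar U)} \leq C\bigl(\|v\|_{C(\overline{V''})} + \|f\|_{C^\alpha_s(\overline{V''})}\bigr)
\leq C\bigl(\|v\|_{C(\bar V)} + \|f\|_{C^\alpha_s(\bar V)}\bigr),
\]
which is precisely \eqref{eqn-schauder} (the constant $C$ depending only on $\alpha,R,R_0,\Lambda$ and the coefficients of $L$, by the choice of intermediate radii).

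The main obstacle is the first bootstrap step: turning an a.e.\ $H^2(V,\fw)$ solution into a classical solution continuous up to $\Gamma_0$, which is needed both to make sense of the trace $v|_{\HH\cap\partial V'}$ and to apply the maximum principle on $V'$. I would rely on the Feehan--Pop higher regularity theorem to overcome this; once continuity up to $\Gamma_0$ is secured, the remaining construct-and-identify argument and the Schauder estimate follow routinely from Theorems \ref{thm-dp-C0BoundaryData-apriori} and \ref{thm-dp-C0BoundaryData-existence}.
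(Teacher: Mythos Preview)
Your construct-and-identify strategy is valid and in fact appears in the paper as an alternative route (see the Remark following the proof), but there is a subtle circularity in how you set it up. To pose the Dirichlet problem with boundary data $v|_{\HH\cap\partial V'}$ and then apply the maximum principle on $V'$, you need $v\in C(\bar V')$, in particular continuity of $v$ up to $\Gamma_0$. You obtain this by invoking \cite{Feehan_Pop_higherregularityweaksoln} directly on the \emph{inhomogeneous} equation $Lv=f$. The paper, however, only cites that reference for the \emph{homogeneous} case $Lv_0=0$ (where it yields $v_0\in C^\infty$); if the inhomogeneous version were available, it would already deliver $v\in C^{2+\alpha}_s(V\cup\Gamma_0)$ and make your identification step redundant. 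The paper's main proof avoids this issue by a different decomposition: it solves $Lw=f$ with \emph{zero} boundary data (so no continuity of $v$ is required), sets $v_0:=v-w\in H^2(V_1,\fw)$ with $Lv_0=0$, and then legitimately applies the homogeneous Feehan--Pop result to $v_0$. This is the main substantive difference between your argument and the paper's.

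For the quantitative Schauder estimate your route is genuinely simpler: once $v\in C^{2+\alpha}_s(V\cup\Gamma_0)$ is established, you apply Theorem~\ref{thm-dp-C0BoundaryData-apriori} directly on an intermediate half-ball, whereas the paper carries the decomposition $v=v_0+w$ through the estimate and invokes the open mapping theorem to bound $\|w\|_{C^{2+\alpha}_s}$ by $\|f\|_{C^\alpha_s}$. Your approach here is correct and more economical. (One minor point: your ``$U\Subset V''\Subset V$'' should read $V''\Subset V\cup\Gamma_0$, since the half-balls must abut $\Gamma_0$.)
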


\begin{proof}
Choose $R_1$ obeying $R\leq R_1 <R_0$ and let $V_1 := B^+_{R_1}(Q_0)$, so that $U\subseteqq V_1\Subset V\cup\Gamma_0$. Then $f \in C^\alpha_s(\bar V_1)$ and we may choose $w \in C^{2+\alpha}_s(V_1\cup\Gamma_0)\cap C_b(V_1\cup\Gamma_1)$ to be the unique solution to $Lw=f$ on $V_1$ and $w = 0$ on $\HH\cap\partial V_1$ provided by Theorem \ref{thm-dp-C0BoundaryData-existence}. Moreover, $v_0 := v-w \in H^2(V_1, \fw)$ is a solution to $Lv_0 = 0$ a.e. on $V_1$ and so, by \cite[Corollary 1.8]{Feehan_Pop_higherregularityweaksoln},
%TODO PF5.5.2012: Add more precise reference after update
we have $v_0 \in C^\infty(V_1\cup\Gamma_0)$ and
%COMMENT PF6.1.2012: V --> V\cup\Gamma_0 in next line
thus $v = v_0 + w \in C^{2+\alpha}_s(V_1\cup\Gamma_0)$. Since $R_1$ is arbitrary,
%COMMENT PF6.1.2012: V --> V\cup\Gamma_0 in next line
we obtain $v \in C^{2+\alpha}_s(V\cup\Gamma_0)$, as desired. The a priori estimate \eqref{eqn-schauder} thus follows from the a priori estimate \eqref{eqn-dpC0-schauder} provided by Theorem \ref{thm-dp-C0BoundaryData-apriori}.
\end{proof}

\begin{rem}[Alternative proofs of regularity in  Proposition \ref{prop-c2a}]
We can avoid relying on the regularity result \cite[Corollary 1.8]{Feehan_Pop_higherregularityweaksoln} if we are given $v \in H^2(V, \fw)\cap C_b(V)$. Indeed, Theorem \ref{thm-dp-C0BoundaryData-existence} provides
%COMMENT PF6.1.2012: V --> V\cup\Gamma_0 in next line
a unique solution $\tilde v \in C^{2+\alpha}_s(V_1\cup\Gamma_0)\cap C_b(V_1\cup\Gamma_1)$ to $L\tilde v=f$ on $V_1$ and $\tilde v = v$ on $\HH\cap\partial V_1$. But then $\tilde v \in H^2(V_1,\fw)$ and by the weak maximum principle for $L$ acting on functions in $H^2(V_1,\fw)$ \cite[Lemma 6.13 \& Theorem 8.8]{Feehan_maximumprinciple}, we must have $v=\tilde v$ a.e. on $V_1$ and
%COMMENT PF6.1.2012: V --> V\cup\Gamma_0 in next line
thus $v \in C^{2+\alpha}_s(V_1\cup\Gamma_0)$.
\end{rem}

The following weak and strong  maximum  principles are shown in \cite{Feehan_maximumprinciple}.
%COMMENT PF6.1.2012: \sO --> \sO\cup\Gamma_0 in next line
Recall that if $v  \in C^{2+\alpha}_s(\sO\cup\Gamma_0)$, then $Dv \in C(\sO\cup\Gamma_0)$ and $yD^2v\in C(\sO\cup\Gamma_0)$ (by definition) while $yD^2v = 0$ on $\Gamma_0$ (see \cite[Proposition I.12.1]{DaskalHamilton1998} or \cite[Lemma 3.1]{Feehan_Pop_mimickingdegen_pde}).

\begin{thm}[Weak maximum principle for the Heston operator]
\label{thm-wmp}
\cite[Theorem 5.1]{Feehan_maximumprinciple}
%COMMENT PF6.1.2012: \sO --> \sO\cup\Gamma_0 in next line
Let $v  \in C^{2+\alpha}_s(\sO\cup\Gamma_0)\cap C_b(\sO\cup\Gamma_1)$ be a subsolution, $Lv \geq 0$ on $\sO$ and $v \leq 0$ on $\HH\cap \partial \sO$, for a bounded domain $\sO \subset \HH$.  Then $v \leq 0$ on $\sO$.
\end{thm}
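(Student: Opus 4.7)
The plan is to argue by contradiction, reducing to a classical maximum-point analysis after subtracting a strict supersolution barrier. Suppose $M := \sup_{\sO} v > 0$; since $v \leq 0$ on $\Gamma_1 := \HH \cap \partial \sO$ with $v \in C(\bar\sO)$ and $\bar\sO$ compact, any positive supremum of the perturbed function $v - \epsilon\phi$ must be attained at a point of $\sO \cup \Gamma_0$ (boundary-of-$\Gamma_0$ corners lying in $\bar\Gamma_1$ where $v_\epsilon \leq 0$).

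The first step is to construct $\phi \in C^{2+\alpha}_s(\bar\sO)$ with $\phi > 0$ on $\bar\sO$ and $L\phi \leq -c_0 < 0$ uniformly. When $r > 0$, the constant $\phi \equiv 1$ gives $L\phi = -r$ and suffices. When $r = 0$ I propose
\begin{equation*}
\phi(x,y) := e^{\mu x} + B(Y_0 + 1 - y),\quad \mu := \tfrac{1}{2},
\end{equation*}
with $Y_0 := \sup_\sO y$, $X_0 := \sup_\sO |x|$, and $B > 0$ chosen so that $B\kappa \leq \tfrac{1}{8}e^{-X_0/2}$. A direct computation, using $\mu(\mu-1)/2 = -1/8$ together with $r,q \geq 0$, yields
\begin{equation*}
L\phi \leq -B\kappa\theta - \tfrac{r+q}{2}e^{\mu x} + y\bigl(B\kappa - \tfrac{1}{8}e^{\mu x}\bigr) \leq -B\kappa\theta < 0,
\end{equation*}
where the decisive constant $-B\kappa\theta$ is produced by the positive drift $\kappa\theta$ at $\Gamma_0$ acting on the decreasing-in-$y$ summand. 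Setting $v_\epsilon := v - \epsilon\phi$ then gives $Lv_\epsilon \geq \epsilon B\kappa\theta > 0$ on $\sO$ and $v_\epsilon \leq -\epsilon \min_{\Gamma_1}\phi < 0$ on $\Gamma_1$.

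The second step is the maximum-point contradiction. Assuming $\sup_\sO v_\epsilon > 0$, the supremum is attained at some $P_\epsilon \in \sO \cup \Gamma_0$. If $y(P_\epsilon) > 0$, then $Dv_\epsilon(P_\epsilon) = 0$ and $D^2 v_\epsilon(P_\epsilon) \leq 0$; combined with positive semidefiniteness of $\tfrac{y}{2}\bigl(\begin{smallmatrix}1 & \varrho\sigma \\ \varrho\sigma & \sigma^2\end{smallmatrix}\bigr)$ for $y > 0$ (using $|\varrho| < 1$ and $\sigma \neq 0$), this forces $Lv_\epsilon(P_\epsilon) \leq -rv_\epsilon(P_\epsilon) \leq 0$, contradicting $Lv_\epsilon > 0$. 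If instead $P_\epsilon \in \Gamma_0$, I invoke the weighted regularity $v_\epsilon \in C^{2+\alpha}_s(\sO\cup\Gamma_0)$: by \cite[Proposition I.12.1]{DaskalHamilton1998}, $yD^2 v_\epsilon$ vanishes on $\Gamma_0$, while tangential maximality gives $\partial_x v_\epsilon(P_\epsilon) = 0$ and the boundary maximality (moving upward into $\sO$) gives $\partial_y v_\epsilon(P_\epsilon) \leq 0$. Evaluating $L$ at $P_\epsilon$ leaves only
\begin{equation*}
Lv_\epsilon(P_\epsilon) = \kappa\theta\,\partial_y v_\epsilon(P_\epsilon) - r v_\epsilon(P_\epsilon) \leq 0,
\end{equation*}
again contradicting $Lv_\epsilon > 0$. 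Hence $\sup_\sO v_\epsilon \leq 0$ for every $\epsilon > 0$, and letting $\epsilon \to 0$ concludes that $v \leq 0$ on $\sO$.

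The principal obstacle is the barrier construction in the zero-discount case $r = 0$ (and a fortiori $r = q = 0$), where $L$ has no restoring zeroth-order term and the usual constant or purely exponential barriers degenerate to $L\phi = 0$ at $\Gamma_0$. The resolution exploits the Fichera sign condition $\kappa\theta > 0$ on the $y$-drift: any barrier strictly decreasing in $y$ near $\Gamma_0$ produces strict negativity of $L\phi$ at the degenerate boundary through the term $\kappa\theta\phi_y$. The coefficient $B$ of the decreasing piece then has to be chosen small enough that, in the bulk, the term $B\kappa y$ remains dominated by the strictly negative concave-exponential contribution $-\tfrac{y}{8}e^{\mu x}$; the explicit constraint $B\kappa \leq \tfrac{1}{8}e^{-X_0/2}$ records this balance and is the only place where boundedness of $\sO$ enters quantitatively.
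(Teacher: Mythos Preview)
The paper does not supply a proof of this theorem; it is quoted from \cite[Theorem~5.1]{Feehan_maximumprinciple} and used as a black box. So there is no in-paper argument to compare against.

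Your barrier-plus-maximum-point argument is correct and is essentially the classical approach to weak maximum principles for Fichera-type degenerate operators. The two ingredients you identify are exactly the right ones: (i) the $C^{2+\alpha}_s$ regularity forces $yD^2v_\epsilon\to 0$ on $\Gamma_0$, so at a boundary maximum only the first-order and zeroth-order terms of $L$ survive; and (ii) the Fichera condition $\kappa\theta>0$ then makes the surviving drift term $\kappa\theta\,\partial_y v_\epsilon$ nonpositive at such a maximum, giving the contradiction. Your barrier for the $r=0$ case is well chosen: the linear-in-$y$ piece generates the crucial $-B\kappa\theta$ at $\Gamma_0$, and the exponent $\mu=\tfrac12$ makes $\mu(\mu-1)<0$ so that the second-order contribution from $e^{\mu x}$ absorbs the competing $+B\kappa y$ term in the bulk.

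Two small remarks. First, in your displayed bound for $L\phi$ you write $-\tfrac{r+q}{2}e^{\mu x}$; the actual coefficient from $(r-q-\tfrac{y}{2})\phi_x$ is $\tfrac{r-q}{2}e^{\mu x}$, but since you are in the case $r=0$ both expressions reduce to $-\tfrac{q}{2}e^{\mu x}\leq 0$, so the conclusion stands. Second, the inequality $Lv_\epsilon>0$ is asserted on $\sO$, whereas in Step~4 you evaluate it at $P_\epsilon\in\Gamma_0$; this is legitimate because all constituents of $Lv_\epsilon$ extend continuously to $\Gamma_0$ under the $C^{2+\alpha}_s$ hypothesis, so the strict lower bound $Lv_\epsilon\geq\epsilon c_0$ persists by continuity. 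It would be worth saying this explicitly.
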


\begin{thm}[Strong maximum principle for the Heston operator]
\label{thm-smp}
\cite[Theorem 4.10]{Feehan_maximumprinciple}
%COMMENT PF6.1.2012: \sO --> \sO\cup\Gamma_0 in next line
Let $v \in C^{2+\alpha}_s(\sO\cup\Gamma_0)$ be a subsolution, $Lv \geq 0$ on $\sO$, for a bounded, connected domain $\sO \subset \HH$. If $v$ achieves its maximum value at a point $P \in \sO \cup \Gamma_0$ and, in addition,  $v(P) \geq 0$ if $r >0$ (where $r$ is the coefficient of $L$ in \eqref{eqn-oper}),
%COMMENT PF5.20.2012: Note the condition on $r$
then $v$ must be a constant on $\bar\sO$.
\end{thm}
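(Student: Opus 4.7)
My plan is to reduce the proof to two cases distinguished by where $v$ attains its maximum $M$, and to handle the degenerate boundary case with a Hopf-type boundary-point argument tailored to the fact that the $y$-drift coefficient $\kappa\theta$ of $L$ is strictly positive at $\Gamma_0$. Since $\sO\subset\HH=\R\times(0,\infty)$, every point of $\sO$ has $y>0$, so $L$ is uniformly elliptic on every compactly contained subdomain of $\sO$. Hence the classical Hopf strong maximum principle applies: if $v$ attains $M$ at some $P\in\sO$, then $v\equiv M$ on the connected component of $\sO$ containing $P$, and by connectedness $v\equiv M$ on $\sO$, hence on $\bar\sO$ by continuity. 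It therefore suffices to rule out, by contradiction, the remaining case in which $v<M$ on $\sO$ but $v(P_0)=M$ for some $P_0=(x_0,0)\in\Gamma_0$.

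Assume such $P_0$ and set $u:=M-v$, so $u\geq 0$, $u>0$ on $\sO$, $u(P_0)=0$, and $Lu\leq -rM\leq 0$. Evaluating $Lv\geq 0$ at $P_0$ using the $C^{2+\alpha}_s$-regularity of $v$ (recall $yD^2v=0$ on $\Gamma_0$) yields
$$
(r-q)v_x(P_0)+\kappa\theta\, v_y(P_0)-rM\geq 0.
$$
Because $\Gamma_0$ is relatively open in $\{y=0\}$ and $P_0$ is an interior maximum of $v|_{\Gamma_0}$ we have $v_x(P_0)=0$; because $M$ is attained at $P_0$ from inside $\sO$ we have $v_y(P_0)\leq 0$. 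Combining these with $rM\geq 0$ (trivially true if $r=0$, and using the hypothesis $v(P)\geq 0$ when $r>0$) forces $v_y(P_0)=0$ and $rM=0$, hence also $u_y(P_0)=0$.

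To contradict $u_y(P_0)=0$ I would execute a Hopf boundary-point argument. Fix $\rho>0$ so small that $\overline{B_{2\rho}(P_0)}\subset\sO\cup\Gamma_0$, and let $B:=B_\rho((x_0,\rho))$ be the open ball tangent to $\Gamma_0$ only at $P_0$, so that $\bar B\setminus\{P_0\}\subset\sO$ and $u>0$ on $\bar B\setminus\{P_0\}$. On the annulus $A:=B\setminus\overline{B_{\rho/2}((x_0,\rho))}$ consider the classical Hopf barrier
$$
\phi(x,y):=e^{-\alpha\left[(x-x_0)^2+(y-\rho)^2\right]}-e^{-\alpha\rho^2},
$$
which vanishes on $\partial B$ and is strictly positive in $B$. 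A direct computation gives
$$
L\phi(P_0)=2\kappa\theta\alpha\rho\, e^{-\alpha\rho^2}>0,
$$
the positivity coming entirely from the drift term $\kappa\theta\phi_y(P_0)$ since the diffusive coefficient $y/2$ vanishes at $P_0$. For $\alpha$ large enough, $L\phi>0$ throughout $\bar A$: on $\{y\geq y_0\}\cap\bar A$ this is the standard uniform-ellipticity calculation, while on $\{y<y_0\}\cap\bar A$, which shrinks to $\{P_0\}$ as $y_0\to 0$, positivity follows by continuity of $L\phi$ from its positive value at $P_0$. Using compactness and $u>0$ on the inner circle $\partial B_{\rho/2}((x_0,\rho))$, choose $\epsilon>0$ so small that $u\geq\epsilon\phi$ on $\HH\cap\partial A$ (on $\partial B$, $\phi\equiv 0$). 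Applying Theorem~\ref{thm-wmp} to $\epsilon\phi-u$ then gives $u\geq\epsilon\phi$ throughout $A$, with equality at $P_0$. Taking the one-sided derivative at $P_0$ in the interior direction $+\hat y$ (using that $u,\phi$ are $C^1$ at $P_0$ with $u(P_0)=\phi(P_0)=0$) yields
$$
u_y(P_0)\geq\epsilon\phi_y(P_0)=2\epsilon\alpha\rho\, e^{-\alpha\rho^2}>0,
$$
contradicting $u_y(P_0)=0$.

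The main obstacle is establishing $L\phi>0$ right up to the point $P_0$ where $L$ degenerates: the classical Hopf calculation relies on uniform ellipticity, whereas here the principal coefficient $y/2$ vanishes and the positive contribution at $P_0$ must come solely from the first-order drift term. The argument succeeds precisely because of the non-degeneracy condition $\kappa\theta>0$, which is the analytic manifestation of the Fichera second-kind condition at $\Gamma_0$ and the reason no boundary condition is prescribed there. Once $L\phi>0$ is verified, the remainder of the argument parallels the classical Hopf boundary-point lemma.
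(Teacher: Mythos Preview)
The paper does not give a proof of this theorem; it is merely quoted from \cite[Theorem~4.3]{Feehan_maximumprinciple}, so there is no in-paper argument to compare against. Your outline is a sound and standard route to such a result: reduce to the classical interior strong maximum principle when the maximum is attained in $\sO$, and for a maximum at $P_0\in\Gamma_0$ run a Hopf boundary-point lemma exploiting the strictly positive normal drift $\kappa\theta>0$ (the Fichera condition) to manufacture the missing ``push'' where the second-order part degenerates. The preliminary deductions $v_x(P_0)=0$, $v_y(P_0)=0$, $rM=0$ from evaluating $Lv\geq 0$ at $P_0$ are correct, as is the computation $L\phi(P_0)=2\kappa\theta\alpha\rho\,e^{-\alpha\rho^2}>0$.

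The one place to tighten is the justification that $L\phi>0$ on all of $\bar A$ for $\alpha$ large. As written, your two-region argument risks a circular dependence: the ``large $\alpha$'' needed on $\{y\geq y_0\}$ depends on $y_0$, while the ``small neighborhood of $P_0$'' on which continuity yields positivity depends on $\alpha$. The clean fix is to factor $L\phi=e^{-\alpha r^2}g(x,y;\alpha)$ and observe that $g$ has the form $2\alpha^2 yQ+\alpha h(x,y)+O(1)$ with $Q\geq c\rho^2/4>0$ on $\bar A$ and $h(P_0)=2\kappa\theta\rho>0$. Fix first an $\alpha$-independent neighborhood $U_0$ of $P_0$ in $\bar A$ on which $h\geq\kappa\theta\rho$; on $U_0$ the term $\alpha h$ already forces $g>0$ for $\alpha$ moderately large, while on the compact set $\bar A\setminus U_0$ one has $y\geq y_0>0$ and the $\alpha^2$-term dominates. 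With this ordering the dependence is non-circular and the application of Theorem~\ref{thm-wmp} on $A$ (where $\Gamma_0(A)=\emptyset$) goes through as you describe.
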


We finish this section by showing  how to reduce to the case $f = 0$ in Theorem \ref{thm-c11}.

\begin{prop}[Reduction to a homogeneous obstacle problem]
\label{prop-fzero}
We may assume, without loss of generality, that $f = 0$ on $V$ in Theorem \ref{thm-c11}.
\end{prop}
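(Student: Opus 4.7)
The plan is a classical subtraction argument that absorbs the inhomogeneity $f$ into the obstacle. I will produce a smooth auxiliary function $w$ with $Aw = f$ on $V$, set $\tilde u := u - w$ and $\tilde\psi := \psi - w$, and observe that $\tilde u$ solves an obstacle problem with vanishing source and new obstacle $\tilde\psi$ in the same regularity class as $\psi$.

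First, I would invoke Theorem \ref{thm-dp-existence} with data $f \in C^\alpha_s(\bar V)$ and boundary value $g \equiv 0$ (which lies trivially in $C^{2+\alpha}_s(\bar V)$) to obtain a unique $w \in C^{2+\alpha}_s(\bar V)$ satisfying
\begin{equation*}
Lw = -f \quad\hbox{on } V, \qquad w = 0 \quad\hbox{on } \HH \cap \partial V,
\end{equation*}
which, since $L = -A$, is the same as $Aw = f$ on $V$. The global Schauder estimate of Theorem \ref{thm-dp-apriori} then yields
\begin{equation*}
\|w\|_{C^{2+\alpha}_s(\bar V)} \leq C\left(\|w\|_{C(\bar V)} + \|f\|_{C^\alpha_s(\bar V)}\right) \leq C\|f\|_{C^\alpha_s(\bar V)},
\end{equation*}
where in the last inequality I would use the weak maximum principle (Theorem \ref{thm-wmp}, or the estimate in \cite{Feehan_maximumprinciple} recorded after Proposition \ref{prop-c2a}) to bound $\|w\|_{C(\bar V)}$ in terms of $\|f\|_{C(\bar V)}$.

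Next, since $w \in C^{2+\alpha}_s(\bar V)$ lies in $H^2(V,\fw)$ on the bounded domain $V$, the function $\tilde u := u - w$ belongs to $H^2(V,\fw) \cap C(\bar V)$, and $\tilde\psi := \psi - w$ belongs to $C^{2+\alpha}_s(\bar V)$. Because $A\tilde u = Au - f$ and $\tilde u - \tilde\psi = u - \psi$ pointwise a.e., the identity
\begin{equation*}
\min\{A\tilde u - 0,\; \tilde u - \tilde\psi\} = \min\{Au - f,\; u - \psi\} = 0 \quad\hbox{a.e. on } V
\end{equation*}
shows that $\tilde u$ solves the homogeneous obstacle problem \eqref{eqn-obst2} with source $\tilde f \equiv 0$ and obstacle $\tilde\psi$.

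Finally, granting Theorem \ref{thm-c11} in the special case $f \equiv 0$, one obtains $\tilde u \in C^{1,1}_s(V\cup\Gamma_0)$ together with an estimate of the form $\|\tilde u\|_{C^{1,1}_s(\bar U)} \leq C(\|\tilde u\|_{C(\bar V)} + \|\tilde\psi\|_{C^{1,1}(\bar V)})$. Writing $u = \tilde u + w$ and combining this with the Schauder bound on $w$ (which controls $\|w\|_{C(\bar V)}$, $\|Dw\|_{C(\bar V)}$, and $\|yD^2 w\|_{L^\infty(V)}$, and hence $\|w\|_{C^{1,1}_s(\bar V)}$, by $\|f\|_{C^\alpha_s(\bar V)}$) delivers both $u \in C^{1,1}_s(V\cup\Gamma_0)$ and the full estimate \eqref{eqn-c1100}. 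The only real point of care is that the substitution must preserve the regularity class of the obstacle and leave the $C^{1,1}(\bar V)$-norm of $\tilde\psi$ controlled by $\|\psi\|_{C^{1,1}(\bar V)}$ plus quantities absorbable into the right-hand side of \eqref{eqn-c1100}; this is exactly what the global Schauder theory for $L$ guarantees. Once the reduction is in place, it suffices for the remainder of the paper to prove Theorem \ref{thm-c11} under the additional hypothesis $f \equiv 0$.
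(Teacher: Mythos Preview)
Your proposal is correct and matches the paper's argument almost exactly: solve the Dirichlet problem with zero boundary data via Theorem~\ref{thm-dp-existence}, subtract the resulting function from both $u$ and $\psi$, and reduce to the case $f\equiv 0$. The only nuance is that the paper absorbs the term $\|w\|_{C(\bar V)}$ into $\|f\|_{C^\alpha_s(\bar V)}$ via the open mapping theorem (as in the proof of \eqref{eqn-schauder}) rather than the maximum-principle estimate you cite, which as recorded in the remark after Proposition~\ref{prop-c2a} needs $r>0$.
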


\begin{proof}
%COMMENT PF6.1.2012: C^{2+\alpha}_s(V) \cap C(\bar V) --> C^{2+\alpha}_s(\bar V) in next line
Let $v \in C^{2+\alpha}_s(V\cup\Gamma_0)\cap C_b(V\cup\Gamma_1)$ be the solution to the Dirichlet problem $Lv = f$ on $V$ and $v=0$ on $\HH\cap\partial V$ (its existence follows from Theorem \ref{thm-dp-C0BoundaryData-existence}). It follows from \eqref{eqn-dpC0-schauder} that
$$
\| v \|_{C^{2+\alpha}_s(\bar U) } \leq  C\left(\| v \|_{C(\bar V)} + \| f \|_{C^\alpha(\bar V)}\right),
$$
and hence by the weak maximum principle estimate,
$$
\| v \|_{C(\bar V)} \leq \frac{1}{r}\| f \|_{C(\bar V)},
$$
provided by \cite[Proposition 2.2 (6) and Theorem 5.1]{Feehan_maximumprinciple}, noting that $r > 0$ by hypothesis in Theorem \ref{thm-c11}, we obtain
\begin{equation}
\label{eqn-c2a6}
\| v \|_{C^{2+\alpha}_s(\bar U) } \leq  C\| f \|_{C^\alpha(\bar V)},
\end{equation}
where in \eqref{eqn-c2a6} we use $C$ to denote a constant which depends at most on $\alpha$, $R_0$, $\Lambda$ and the coefficients  of $L$.

Now if $u$ is a solution to the obstacle problem \eqref{eqn-obst1} on $V$ as in Theorem \ref{thm-c11}, then $\bar u:=u-v$ is a solution to the obstacle problem \eqref{eqn-obst1} on $V$ with source function $\bar f=0$ on $V$ and obstacle $\bar \psi := \psi - v$ on $V$. If Theorem \ref{thm-c11} is proved for $\bar f = 0$ in place of $f$ on $V$, then $\bar u \in C^{1,1}_s(\bar U)$ and the estimate \eqref{eqn-c1100} for $\bar u$ yields
$$
\|\bar u\|_{C^{1,1}_s(\bar U)} \leq C\left(\|\bar u \|_{C(\bar V)} + \|\bar \psi \|_{C^{1,1}(\bar V)}\right).
$$
But $u = \bar u + v \in C^{1,1}_s(\bar U)$ and we obtain the estimate \eqref{eqn-c1100} for $u$ from the preceding inequality and the estimate \eqref{eqn-c2a6} for $v$, together with the weak maximum principle estimate for $v$.
\end{proof}

\begin{rem}[Role of the hypotheses that the coefficient $r$ is positive in Theorem \ref{thm-c11}]
\label{rem:Positive_coefficient_r}
We appeal to positivity of the coefficient $r$ in the statement of Theorem \ref{thm-smp} and the proof of Proposition \ref{prop-fzero}; we use the fact that $\psi \in C_s^{2+\alpha}(\bar V)$ when we appeal to Theorem \ref{thm-dp-C0BoundaryData-existence} to solve the Dirichlet problem \eqref{eqn-xi} with source term $L\psi \in C_s^\alpha(V\cup\Gamma_0)\cap C_b(V\cup\Gamma_1)$.
\end{rem}

Because of the reduction in Proposition \ref{prop-fzero}, we may assume without loss of generality that $u \in H^2(V,\fw) \cap C(\bar V)$ is  a solution to the  obstacle problem \eqref{eqn-obst1} with obstacle function $\psi \in C_s^{2+\alpha}(\bar V)$
%COMMENT PF5.9.2012: Strengthened space for \psi
%COMMENT PF5.17.2012: Strengthened space for \psi again: V --> \bar V
and $f = 0$ on $V$, that is,
\begin{equation}
\label{eqn-obst_homog}
\min\{-Lu, u-\psi\} = 0 \quad \mbox{a.e. on } V.
\end{equation}
We make this assumption for the remainder of this article.

\section{Supremum bounds}
\label{sec:SupBounds}
We will assume, throughout this section, that $u$ is a solution to the obstacle problem \eqref{eqn-obst_homog} on $V$, where $V$ is as in \eqref{eq:domains}, and that all the assumptions of Theorem \ref{thm-c11} hold. Adopting the terminology of mathematical finance, we call
\begin{equation}
\label{eqn-om}
\C(u) = \{P \in V\cup\Gamma_0  :  u(P) > \psi(P)\}
\end{equation}
the {\em continuation region}  (or non-coincidence set),
\begin{equation}
\label{eqn-con}
\E(u) = \{P \in V\cup\Gamma_0  : u(P) = \psi(P)\}
\end{equation}
the {\em exercise region} (or coincidence set), and
\begin{equation}
\label{eqn-fb}
\F = (V\cup\Gamma_0)\cap \partial \C(u)
\end{equation}
the \emph{free boundary} (or optimal exercise boundary, as it is known in mathematical finance). From \eqref{eqn-obst_homog} and \eqref{eqn-om}, we see that
\begin{equation}
\label{eqn-obst3}
Lu \leq 0  \quad \mbox{a.e. on } V \quad \mbox{and} \quad Lu = 0 \quad \mbox{on } \C(u).
\end{equation}
Since $Lu=0$ on $\sC(u)$, it follows from Proposition \ref{prop-c2a} that $u$ is of class $C^{2+\alpha}_s$  on  $\C(u)$. (Actually one may also easily see  that $u$ is of class $ C^\infty$ on $\C(u)$.)

We will establish sharp growth estimates from above on $u-\psi$ near free boundary  points $P_0 \in \F$. Because of the degeneracy of our operator $L$,  we will need to scale our estimates in different ways, depending on the distance of $P_0$ from the boundary portion, $\bar\sO\cap\partial\HH = \{y=0\}$. Similar estimates in the non-degenerate case, where $L$ is the Laplace operator, $\Delta$, were established by Caffarelli in \cite{Caffarelli_jfa_1998}.

The first such estimate, in Proposition \ref{prop-sup1}, concerns with free boundary points $P_0=(x_0,y_0) \in \F$ with $y_0 >0$. To simplify the notation we will assume that $0< y_0 <1$. The estimate near any free boundary point $P_0=(x_0,y_0) \in \F$ with $y_0 >1$ can be shown similarly. We have the following analogue of \cite[Lemma 2]{Caffarelli_jfa_1998}; see also \cite[Lemma 1.6]{Petrosyan_2011_msrilectures} (where $L=\Delta$ and $\psi=0$).

\begin{prop}[Quadratic growth of solution near free boundary and away from degenerate boundary]
\label{prop-sup1}
%COMMENT 5.21.2012: Do we need to keep repeating  \F \cap V rather than \F?
Let $u$ be as in Theorem \ref{thm-c11} and let $P_0=(x_0,y_0) \in \F \cap V$ with $0 < y_0 <1$. Then there are constants $0 < \rho_0 <1$ and $0<C < \infty$, depending at most on the coefficients of $L$, such that if $B_{\rho_0 y_0}(P_0) \Subset V$, then
\begin{equation}
\label{eqn-bound1}
\sup_{B_{\rho y_0/2}(P_0)} (u - \psi)  \leq  Cy_0\rho^2\| \psi \|_{C^{1,1}(\bar B_{\rho_0 y_0}(P_0))}, \quad \forall \rho < \rho_0.
\end{equation}
\end{prop}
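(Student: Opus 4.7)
The plan is to adapt Caffarelli's quadratic growth argument \cite[Lemma 2]{Caffarelli_jfa_1998} (see also \cite[Lemma 1.6]{Petrosyan_2011_msrilectures}) to our degenerate operator $L$. The key point is that for $y_0>0$ and $\rho_0$ small, $L$ acts as a uniformly elliptic operator on $B_{\rho_0 y_0}(P_0)$, with ellipticity proportional to $y_0$; under a suitable rescaling the local obstacle problem becomes uniformly elliptic on a unit ball, and the classical Caffarelli argument applies with minor modifications.

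First I would rescale. Fix $\rho_0\le 1/2$ (to be chosen below), and for each $\rho\in(0,\rho_0)$ introduce coordinates $\tilde X=(X-P_0)/(\rho y_0)$ and rescaled functions
\[
\tilde u(\tilde X):=\frac{u(P_0+\rho y_0\tilde X)}{\rho^2 y_0}, \qquad \tilde\psi(\tilde X):=\frac{\psi(P_0+\rho y_0\tilde X)}{\rho^2 y_0}.
\]
A direct computation shows that $\tilde u$ solves $\min\{\tilde L\tilde u,\,\tilde u-\tilde\psi\}=0$ on $B_1(0)$, where the second-order coefficient of $\tilde L$ equals $(1+\rho\tilde y)/2\in[1/4,3/4]$ on $B_1$ (uniformly elliptic, constants depending only on $\sigma,\varrho$), and the first- and zeroth-order coefficients are $O(\rho_0)$. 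Tracking the scaling factors for the derivatives of $\tilde\psi$, one verifies that each of the three types of terms in $\tilde L\tilde\psi$ contributes $O(K)$, where $K=\|\psi\|_{C^{1,1}(\bar B_{\rho_0 y_0}(P_0))}$ (using $y_0\le 1$ for the second-order piece, and that the $\rho$-factors in the rescaled coefficients of $\tilde L$ exactly cancel the $1/\rho$ and $1/(\rho^2 y_0)$ growth of the derivatives and zeroth-order part of $\tilde\psi$). Hence $\tilde w:=\tilde u-\tilde\psi\ge 0$ satisfies $\tilde w(0)=0$ and $|\tilde L\tilde w|\le C(L)K$ a.e.\ on $B_1$, with $\tilde L\tilde w=-\tilde L\tilde\psi$ holding classically on the rescaled continuation region by Proposition \ref{prop-c2a} and $\tilde w\equiv 0$ on the exercise region.

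Next I would prove the uniformly-elliptic quadratic-growth estimate $\sup_{B_{1/2}(0)}\tilde w\le C(L)K$ by contradiction. If this fails, there are sequences of rescaled data (with $K_j=1$ after normalization) such that $M_j:=\sup_{B_{1/2}}\tilde w_j\to\infty$. A further normalization $\bar w_j:=\tilde w_j/M_j$ yields a sequence on $B_1$ with $\sup_{B_{1/2}}\bar w_j=1$, $\bar w_j(0)=0$, $\bar w_j\ge 0$, and $|\tilde L_j\bar w_j|\le C/M_j\to 0$ in $L^\infty$. Since each $\tilde L_j$ is uniformly elliptic on $B_1$ with uniformly bounded coefficients, interior H\"older and Schauder estimates for non-degenerate elliptic operators furnish uniform compactness, and a subsequence converges locally uniformly to $\bar w_\infty\ge 0$ on $B_{1/2}$, satisfying $\bar w_\infty(0)=0$, $\sup_{B_{1/2}}\bar w_\infty=1$, and solving a homogeneous constant-coefficient elliptic equation. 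The strong maximum principle (cf.\ Theorem \ref{thm-smp}, applied to $-\bar w_\infty$) then forces $\bar w_\infty\equiv 0$, a contradiction. Unwinding the rescaling,
\[
\sup_{B_{\rho y_0/2}(P_0)}(u-\psi)=\rho^2 y_0\sup_{B_{1/2}(0)}\tilde w\le C(L)K y_0\rho^2,
\]
which is \eqref{eqn-bound1}.

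The main obstacle will be the compactness step in the blow-up argument: one must extract a nontrivial limit $\bar w_\infty$ with enough regularity for the strong maximum principle to apply. Since the rescaled operators $\tilde L_j$ are uniformly elliptic on $B_1$ with uniformly bounded coefficients (by construction of the rescaling, given $\rho_0$ small and $y_0$ bounded away from zero in the rescaled domain), both compactness and regularity of the limit reduce to standard non-degenerate elliptic theory; the subtle point is to normalize the family so that the supremum is attained in the closed half-ball where interior compactness holds, which forces the limit to be nontrivial.
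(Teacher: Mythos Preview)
Your rescaling is correct and the setup is fine: after the change of variables, $\tilde w=\tilde u-\tilde\psi\ge 0$ satisfies $\tilde w(0)=0$ and $|\tilde L\tilde w|\le C(L)K$ a.e.\ on $B_1$, with $\tilde L$ uniformly elliptic with bounded coefficients. The gap is in the compactness step. After normalizing so that $M_j=\sup_{B_{1/2}}\tilde w_j\to\infty$ and setting $\bar w_j=\tilde w_j/M_j$, you only control $\|\bar w_j\|_{L^\infty(B_{1/2})}=1$; you have \emph{no} a priori bound on $\bar w_j$ on any ball $B_r$ with $r>1/2$. Interior $W^{2,p}$ or Schauder estimates therefore give compactness only on compact subsets of $B_{1/2}$, so the limit $\bar w_\infty$ lives on the open ball $B_{1/2}$. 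Nothing prevents the maxima of $\bar w_j$ from drifting to $\partial B_{1/2}$, so $\bar w_\infty$ may well be identically zero and there is no contradiction. Your last paragraph flags exactly this issue but does not resolve it; the fix you allude to would require a doubling-type inequality such as $\sup_{B_{1/2}}\tilde w\le C\sup_{B_{1/4}}\tilde w$, which you have not established and which does not follow from what you have.

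The paper avoids this difficulty by a direct Harnack argument (which is in fact closer to what Caffarelli actually does in \cite[Lemma~2]{Caffarelli_jfa_1998}). One introduces an auxiliary function $\zeta$ solving $L\zeta=L(\ell_{P_0})$ on $B_{\rho y_0}(P_0)$ with constant boundary value $10My_0\rho^2$, where $\ell_{P_0}$ is the first-order Taylor polynomial of $\psi$ at $P_0$, and proves via explicit barriers that $My_0\rho^2\le\zeta\le 14My_0\rho^2$. Setting $w=u-\ell_{P_0}+\zeta>0$, one has $Lw=Lu\le 0$. Splitting $w=w_1+w_2$ with $Lw_1=0$ and $w_1=w$ on $\partial B_{\rho y_0}(P_0)$, the weak maximum principle gives $0\le w_1\le w$, and then the Harnack inequality (after the same rescaling you use) yields $\sup_{B_{\rho y_0/2}}w_1\le Cw_1(P_0)\le Cw(P_0)=C\zeta(P_0)\le CMy_0\rho^2$. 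The piece $w_2$ vanishes on the boundary and is handled by a strong-maximum-principle case analysis: its interior maximum, if it lies in $\sC(u)$, must propagate to a point of $\sE(u)$, where $w_2\le w=\psi-\ell_{P_0}+\zeta\le CMy_0\rho^2$. Your own framework can be repaired along the same lines, bypassing compactness: on $B_1$ split $\tilde w=h+p$ with $\tilde Lh=0$, $h=\tilde w$ on $\partial B_1$; then $|p|\le CK$ by the ABP estimate, $h\ge 0$ by the weak maximum principle, and Harnack gives $\sup_{B_{1/2}}h\le Ch(0)=C|p(0)|\le CK$, hence $\sup_{B_{1/2}}\tilde w\le CK$ directly.
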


\begin{rem}
\label{rem-sup1}
We shall establish \eqref{eqn-bound1} with the aid of certain auxiliary functions, $\zeta$ in \eqref{eqn-zeta0} and $w$ in \eqref{eq:defn-w}, defined on balls $\bar B_{\rho y_0}(P_0)$. (See Fig. \ref{fig:c11_growth_away_degen}.)
\end{rem}

\begin{figure}[htbp]
\centering
\begin{picture}(200,200)(0,0)
\put(0,0){\includegraphics[height=200pt]{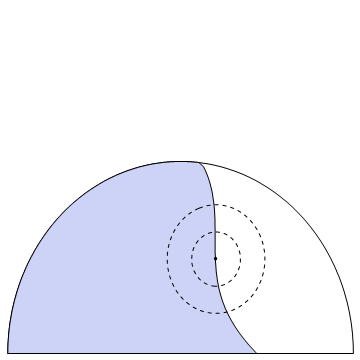}}
\put(45,95){$\scriptstyle V$}
\put(30,20){$\scriptstyle \sE(u)$}
\put(148,20){$\scriptstyle \sC(u)$}
\put(109,53){$\scriptstyle P_0$}
\put(125,71){$\scriptstyle \rho y_0$}
\put(148,53){$\scriptstyle \rho_0 y_0$}
\end{picture}
\caption{Regions in the proof of Proposition \ref{prop-sup1} for estimating the growth of a solution near the free boundary and away from the degenerate boundary.}
\label{fig:c11_growth_away_degen}
\end{figure}

We begin by observing that since $B_{\rho_0 y_0}(P_0) \Subset V$ by assumption, the operator $L$ is uniformly elliptic  on $B_{\rho_0 y_0}(P_0) \Subset \HH$. Consider the linear approximation,
\begin{equation}
\label{eq:ObstacleLinearApprox}
\lo(x, y) := \psi(P_0) + D\psi (P_0) \cdot (x-x_0,y-y_0), \quad (x,y) \in \R^2,
\end{equation}
to our obstacle function $\psi$ at $P_0$. A direct calculation shows that
\begin{equation}
\label{eqn-lo2}
|L (\lo)| \leq M \quad \mbox{on }  B_{\rho_0 y_0}(P_0),
\end{equation}
where, noting that $0<\rho_0<1$ and $0<y_0<1$ as in the hypotheses of Proposition \ref{prop-sup1} and that $\| \psi \|_{C^1(\bar B_{\rho_0 y_0}(P_0))} \leq \| \psi \|_{C^{1,1}(\bar B_{\rho_0 y_0}(P_0))}$,
\begin{equation}
\label{eqn-defnM}
M := K\| \psi \|_{C^{1,1}(\bar B_{\rho_0 y_0}(P_0))},
\end{equation}
%COMMENT PF 7.25.2012 Here and below, replaced the DH style C^{2+\alpha}(U) by the GT style C^{2,\alpha}(U) to avoid confusion with C^{2+\alpha}_s(U)
and the constant $K>0$ depends at most on the coefficients of $L$. For $0<\rho<\rho_0$, let $\zeta \in C^{2,\alpha}(\bar{B}_{\rho y_0}(P_0))$ be the unique solution (assured by \cite[Theorem 6.14]{GilbargTrudinger}) to the elliptic  boundary value problem,
\begin{equation}
\label{eqn-zeta0}
\begin{cases}
L\zeta = L(\lo) & \mbox{on } \Boy,
\\
\zeta = 10 My_0 \rho^2 & \mbox{on } \partial \Boy.
\end{cases}
\end{equation}
The next lemma provides sharp bounds from above and below on $\zeta$ in terms of $\rho$ and the constant $M$ in \eqref{eqn-zeta0}.
%COMMENT PF5.21.2012: 0<\rho\leq \rho_0 probably suffices

\begin{lem}[Quadratic growth of an auxiliary function near free boundary and away from degenerate boundary]
\label{lem-zeta}
The function $\zeta \in C^{2,\alpha}(\bar{B}_{\rho y_0}(P_0))$ in \eqref{eqn-zeta0} satisfies the bound,
\begin{equation}
\label{eqn-zeta-bound}
M y_0   \rho^2 \leq \zeta  \leq 14  M y_0  \rho^2 \quad \mbox{\emph{on} }\Boy, \quad 0 < \rho < \rho_0,
\end{equation}
where $M$ is as in \eqref{eqn-defnM} and $\rho_0<1$ is a constant depending at most on the coefficients of $L$.
%COMMENT PF5.11.2012: Not sure we need \rho_0 < 1, but added here since asserted in proof of lemma
\end{lem}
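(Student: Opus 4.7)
The plan is to apply the weak maximum principle for $L$ on the ball $B_{\rho y_0}(P_0)$, using explicit quadratic barriers. The crucial observation is that, since $\rho<\rho_0<1$, we have $y\in[(1-\rho_0)y_0,(1+\rho_0)y_0]$ throughout $B_{\rho y_0}(P_0)$, so $L$ is uniformly elliptic there with ellipticity bounded below by a positive multiple of $y_0$ and with first- and zero-order coefficients uniformly bounded in terms of the coefficients of $L$ alone (using $y_0<1$). The correct scaling $y_0\rho^2$ of $\zeta$ matches the heuristic bound $R^2/\lambda=\rho^2 y_0^2/y_0=y_0\rho^2$ for a Poisson-type problem on a ball of radius $R=\rho y_0$ with ellipticity $\lambda\sim y_0$; one should think of the boundary value $10My_0\rho^2$ as being large relative to the interior perturbation forced by the right-hand side $L(\lo)$.

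For the upper bound, I would set $v:=\zeta-10My_0\rho^2$, so that $v=0$ on $\partial B_{\rho y_0}(P_0)$ and $Lv=L(\lo)+10rMy_0\rho^2$, yielding $|Lv|\leq 2M$ provided $\rho_0$ is taken small enough. Next, I take the barrier $\phi_+(P):=c(\rho^2y_0^2-|P-P_0|^2)$. A direct computation gives
$$
L\phi_+ = -cy(1+\sigma^2) + c\,O(\rho y_0) + c\,O(\rho^2 y_0^2),
$$
with implicit constants depending only on the coefficients of $L$. Taking $\rho_0$ sufficiently small then forces $L\phi_+\leq -cy_0(1+\sigma^2)(1-\varepsilon)$ on $B_{\rho y_0}(P_0)$, with $\varepsilon=\varepsilon(L,\rho_0)$ as small as desired. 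Choosing $c:=4M/((1-\varepsilon)y_0(1+\sigma^2))$ gives $L\phi_+\leq -2M\leq Lv$ together with $\phi_+=v=0$ on $\partial B_{\rho y_0}(P_0)$. The weak maximum principle (Theorem~\ref{thm-wmp}), applicable since the zero-order coefficient of $L$ is $-r\leq 0$, then yields $v\leq\phi_+$. The maximum of $\phi_+$, attained at $P_0$, is $c\rho^2y_0^2=4My_0\rho^2/((1-\varepsilon)(1+\sigma^2))\leq 4My_0\rho^2$, where the last inequality uses $(1-\varepsilon)(1+\sigma^2)\geq 1$ and can be arranged by taking $\varepsilon$ small, which is possible since $\sigma^2>0$. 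This gives $\zeta=v+10My_0\rho^2\leq 14My_0\rho^2$. For the lower bound, I repeat the argument with $w:=10My_0\rho^2-\zeta$ in place of $v$: this function also vanishes on $\partial B_{\rho y_0}(P_0)$ and satisfies $|Lw|\leq 2M$, so the same barrier yields $w\leq 4My_0\rho^2$ and hence $\zeta\geq 6My_0\rho^2\geq My_0\rho^2$.

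The main obstacle is the quantitative verification that the lower-order terms in $L$ do not overwhelm the second-order term on the small ball $B_{\rho y_0}(P_0)$, which is precisely what forces $\rho_0$ to depend on the coefficients of $L$. A related subtlety is the apparent degeneracy as $y_0\to 0$: although both the radius $R=\rho y_0$ and the ellipticity $\lambda\sim y_0$ of $L$ tend to zero with $y_0$, their combination $R^2/\lambda=y_0\rho^2$ stays controlled, and every constant entering the barrier argument scales in a compatible way, so the bounds in \eqref{eqn-zeta-bound} are uniform in $y_0\in(0,1)$.
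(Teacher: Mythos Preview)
Your proof is correct and follows essentially the same strategy as the paper: apply the weak maximum principle on $B_{\rho y_0}(P_0)$ with quadratic barriers whose second-order contribution under $L$ is of size $\sim cy_0$, balancing the right-hand side $|L(\lo)|\leq M$ and forcing $\rho_0$ small to absorb the first- and zero-order terms. The paper differs only in implementation: it first rescales $\zeta$ to $\bar\zeta(x,y)=(My_0)^{-1}\zeta(x_0+y_0x,y_0+y_0y)$ on $B_\rho$, so that the rescaled operator $L_{y_0}$ is uniformly elliptic with constants independent of $y_0$, and then uses the one-variable barrier $\vartheta(x,y)=ax^2$ (with $a=-8$ for the lower bound and $a=4$ for the upper bound) rather than your radial barrier $c(\rho^2y_0^2-|P-P_0|^2)$. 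Your barrier has the slight advantage of vanishing on $\partial B_{\rho y_0}(P_0)$, which streamlines the boundary comparison; the paper's choice avoids the dependence on $\sigma^2$ that enters your constant through $1+\sigma^2$ (you need $(1-\varepsilon)(1+\sigma^2)\geq 1$, which is fine since $\sigma\neq 0$ by \eqref{eqn-coeff}, but forces $\rho_0$ to depend on $\sigma$ more delicately when $\sigma$ is small).
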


Before proceeding to the proof of Lemma \ref{lem-zeta}, we consider the effect of rescaling on the operator $L$. Observe that, for any $u\in C^2(\HH)$, if
$$
v(x,y) := u(x_0 + y_0x, y_0 + y_0y), \qquad (\bar x, \bar y) := (x_0 + y_0x, y_0 + y_0y),
$$
then
%%COMMENT PF 5.14.2012: Maybe avoid use of u here since we reserve u for solution to the obstacle problem
%$$
%Dv(x,y) = y_0(Du)(x_0 + y_0x, y_0 + y_0y),
%$$
%where $Dv = (v_x,v_y)$ and $Du = (u_{\bar x}, u_{\bar y})$ and similarly for $D^2v, D^2u$, where
%$$
%D^2v(x,y) = y_0^2(D^2u)(x_0 + y_0x, y_0 + y_0y).
%$$
%Thus, we obtain,
\begin{align*}
(Lu)(\bar x, \bar y)
%&= \frac{\bar y}{2} \left(u_{\bar x\bar x} + 2\varrho\sigma u_{\bar x\bar y} + \sigma^2 u_{\bar y\bar y}\right)(\bar x, \bar y) + \left(r-q-\bar y/2\right) u_{\bar x}(\bar x, \bar y)
%\\
%&\quad  + \kappa\left(\theta - \bar y\right) u_{\bar y}(\bar x, \bar y) - r u(\bar x, \bar y)
%\\
&= \frac{y_0 + y_0 y}{2}y_0^{-2}\left(v_{xx} + 2\varrho\sigma v_{xy} + \sigma^2 v_{yy}\right)(x,y)
+ \left(r-q - \frac{y_0(1+y)}{2}\right) y_0^{-1}v_x(x,y)
\\
&\quad + \kappa\left(\theta - y_0(1+y)\right)v_y(x,y) - r v(x,y),
\end{align*}
and therefore,
\begin{equation}
\label{eq:LuLvidentity}
\begin{aligned}
y_0(Lu)(\bar x, \bar y) = (L_{y_0}v)(x,y), \quad \forall (x,y)\in\HH,
\end{aligned}
\end{equation}
where
\begin{equation}
\label{eqn-Lv}
\begin{aligned}
(L_{y_0}v)(x,y) &:= \frac{1+y}{2}\left(v_{xx} + 2\varrho\sigma v_{xy} + \sigma^2 v_{yy}\right)(x,y)
+ \left(r-q - \frac{y_0(1+y)}{2}\right)v_x(x,y)
\\
&\qquad + \kappa\left(\theta - y_0(1+y)\right)v_y(x,y) - ry_0 v(x,y), \quad \forall (x,y)\in\HH.
\end{aligned}
\end{equation}
We now proceed to the

\begin{proof}[Proof of Lemma \ref{lem-zeta}]
Since the ellipticity constant for $L$ depends on $y_0$, we shall use the rescaling in \eqref{eq:LuLvidentity}. Note that the operator $L_{y_0}$ is uniformly elliptic on $B_{1/2}$, since
\begin{equation}
\label{eqn-ell}
  \frac 14 <    \frac{1+y}2 < \frac 34 \quad \mbox{on }\ B_{1/2},
\end{equation}
and the coefficients of $L_{y_0}$ are bounded by a constant (recall that $y_0 <1$) depending at most on the coefficients of $L$. Let
\begin{equation}
\label{eq:defn-bzeta}
\bar \zeta(x,y) := \frac 1{M y_0} \zeta(x_0 + y_0x, y_0 + y_0y), \quad\forall (x,y) \in B_\rho,
\end{equation}
with $\zeta$ as in \eqref{eqn-zeta0}. It follows from \eqref{eqn-lo2} and \eqref{eqn-zeta0} that  $\bar \zeta$ satisfies
\begin{equation}
\label{eqn-bzeta}
\begin{aligned}
|L_{y_0}\bar\zeta| &\leq 1 \quad\hbox{on }\Boo,
\\
\bar\zeta &= 10 \rho^2 \quad \mbox{on } \partial \Boo,
\end{aligned}
\end{equation}
since \eqref{eq:LuLvidentity} yields
$$
(L_{y_0}\bar\zeta)(x,y) = \frac{1}{M y_0}y_0(L\zeta)(\bar x, \bar y) = \frac{1}{M}(L\zeta)(\bar x, \bar y).
$$
We will show that
\begin{equation}
\label{eqn-bzeta-bound}
\rho^2 \leq \bar \zeta  \leq 14    \rho^2  \quad \mbox{on } \Boo,
\end{equation}
provided $\rho <\rho_0$, with $\rho_0<1$ a constant depending at most on the coefficients of $L$, and this will conclude the proof of the lemma, since \eqref{eqn-zeta-bound} follows from \eqref{eq:defn-bzeta} and \eqref{eqn-bzeta-bound}.

To this end, we  consider the barrier function,
\begin{equation}
\label{eq:VarthetaBarrier}
\vartheta(x,y) := ax^2, \quad\forall (x,y)\in\R^2,
\end{equation}
for different choices  of constants $a \in \R$ and  compute that
$$
(L_{y_0}\vartheta)(x,y) = a\left[  (1+y) +  2\left(r - q- \frac {y_0(1+y)}2\right)x -  r  y_0x^2 \right].
$$
Since $1/2 < 1+y < 3/2$ on $B_{1/2}$,  by choosing $\rho < \rho_0$, with $\rho_0<1$ a constant depending at most on the coefficients $r,q$ of $L$,
and using $(x,y) \in B_\rho$ and recalling that $0 < y_0 < 1$,  we can ensure that
\begin{equation}
\label{eqn-theta}
L_{y_0}\vartheta <  \frac 14 \,  a \quad \mbox{on } \Boo, \quad \mbox{if}\,\, a < 0
\quad \mbox{and}  \quad
L_{y_0}\vartheta >  \frac 14 \,  a \quad \mbox{on } \Boo, \quad \mbox{if}\,\, a > 0.
\end{equation}
Choose $a = - 8$ and set $w:=\bar \zeta + \vartheta -  \rho^2$. By combining \eqref{eqn-bzeta} and \eqref{eqn-theta} and using the definition \eqref{eqn-Lv} of $L_{y_0}$, we obtain
$$
L_{y_0}w \leq 1 - 2  + r   \rho^2 y_0<0 \quad \mbox{on } B_\rho,
$$
if $\rho^2 < 1/r$  (remember that $y_0 < 1$). On the other hand,  since $\bar \zeta = 10\rho^2$ on $\partial B_\rho$
%COMMENT 5.11.2012: Removed (P_0) from the \partial B_\rho, since we're centered at (0,0)
by \eqref{eqn-bzeta} and using \eqref{eq:VarthetaBarrier}, we see that
\begin{align*}
w &= \bar \zeta + \vartheta - \rho^2
\\
&\geq 10\rho^2 - 8\rho^2 -  \rho^2 >0 \quad \mbox{on } \partial B_\rho.
\end{align*}
Therefore, the weak maximum principle for $L_{y_0}$ on $B_\rho$ implies that
$$
\bar \zeta +  \vartheta -  \rho^2 \geq 0 \quad\hbox{on } B_\rho.
$$
Since $\vartheta = -8x^2 \leq 0$ on $\R^2$, we conclude that $\bar \zeta \geq   \rho^2 - \vartheta \geq   \rho^2$ in $\Boo$.

We will now estimate $\bar \zeta$ from above. This time we take $a= 4$ and setting $z:=\bar \zeta + \vartheta$,
%COMMENT 5.11.2012: Note change from \theta above and immediately below (typos)
we now find from \eqref{eqn-bzeta} and \eqref{eqn-theta} that
\begin{equation*}
L_{y_0}z > -1 + \frac{1}{4}4 =0 \quad\hbox{on } B_\rho.
\end{equation*}
%COMMENT 5.11.2012: Note change from L above (typo)
But \eqref{eqn-bzeta} and the definition \eqref{eq:VarthetaBarrier} give $\vartheta = 4x^2 \leq 4\rho^2$ on $B_\rho$ and
$$
z=\bar \zeta+ \vartheta \leq 10\rho^2 + 4\rho^2 = 14\rho^2 \quad\hbox{on }\partial \Boo,
$$
and so the weak maximum principle for $L_{y_0}$ on $B_\rho$ shows that $\bar \zeta \leq 14 \rho^2$ on $\Boo$. This finishes the proof of \eqref{eqn-bzeta-bound}, and hence concludes the proof of our lemma.
\end{proof}

\begin{proof}[Proof of Proposition \ref{prop-sup1}]  We shall follow the proof of Lemma 2  in \cite{Caffarelli_jfa_1998}. Our case is more difficult since linear functions are not solutions to the equation $Lu=0$. In addition, our  operator $L$ has variable coefficients and our scaling depends on the ellipticity constant of the operator $L$ on $V$, which is comparable to $y_0$.

With $\lo$ given by \eqref{eq:ObstacleLinearApprox} and $0<\rho<\rho_0$ and $\zeta \in C^{2,\alpha}(\bar B_{\rho y_0}(P_0))$ the function defined by \eqref{eqn-zeta0}, we set
\begin{equation}
\label{eq:defn-w}
w:=u-\lo + \zeta \in H^2(B_{\rho y_0}(P_0))\cap C(\bar B_{\rho y_0}(P_0))
\end{equation}
and observe that
$$
w =  (u-\psi) + (\psi  - \lo + \zeta) \geq \psi  - \lo + \zeta \quad\hbox{on }\bar B_{\rho y_0}(P_0),
$$
since $u\geq \psi$ on $\bar B_{\rho y_0}(P_0)$. By Taylor's theorem,
\begin{equation}
\label{eqn-taylor1}
|\psi(x,y) - \lo (x,y)| \leq 2y_0^2\rho^2 \| \psi \|_{C^{1,1}(\bar B_{\rho y_0}(P_0))}, \quad \forall (x,y) \in \bar B_{\rho y_0}(P_0).
\end{equation}
Since $\zeta \geq M y_0 \rho^2$ by \eqref{eqn-zeta-bound} and $y_0 <1$ (by hypothesis in Proposition \ref{prop-sup1}), we conclude that
%COMMENT PF5.22.2012: Often write $\rho < \rho_0$ but need $\rho \leq \rho_0
\begin{align*}
w  &\geq  - 2y_0^2\rho^2\| \psi \|_{C^{1,1}(\bar B_{\rho y_0}(P_0))} + M y_0\rho^2
\\
&\geq  - 2y_0^2\rho^2\| \psi \|_{C^{1,1}(\bar B_{\rho_0 y_0}(P_0))} + M y_0\rho^2 \quad\hbox{(since $\rho<\rho_0$)}
\\
&>0 \quad \mbox{on } \bar B_{\rho y_0}(P_0)  \quad\hbox{(by definition of $M$)},
\end{align*}
provided that the constant $K$ in the definition \eqref{eqn-defnM} of $M$ is chosen large enough that $K>2$. Also, since  $L\zeta=L(\lo)$ by \eqref{eqn-zeta0}, we have
$$
Lw  = Lu \leq 0 \quad \mbox{a.e. on } \Boy,
$$
where the inequality follows from \eqref{eqn-obst3}. Let us now split $w$ as
\begin{equation}
\label{eq:split-w}
w=w_1 + w_2,
\end{equation}
where $w_1 \in C^{2,\alpha}(\Boy)\cap C(\bar B_{\rho y_0}(P_0))$ is the unique solution (assured by \cite[Theorem 6.13]{GilbargTrudinger}) to
\begin{equation}
\label{eq:BVPw1}
\begin{cases}
L w_1=0  &\mbox{on } \Boy,
\\
w_1=w &\mbox{on } \partial\Boy.
\end{cases}
\end{equation}
(Note that $w=u-\lo + \zeta$ belongs to $C(\partial\Boy)$.)
%COMMENT PF 7.24.2012 Added detail from this point through Equation \eqref{eq:w1-inequality}
Because
$$
L(w_1-w)\geq 0 \quad\hbox{a.e. on }\Boy \quad\hbox{and}\quad w_1-w=0 \quad\mbox{on } \partial\Boy,
$$
the weak maximum principle \cite[Theorem 9.1]{GilbargTrudinger} implies
$$
w_1 \leq w \quad\hbox{on }\bar B_{\rho y_0}(P_0),
$$
and, noting that $w >0$ on $\bar B_{\rho y_0}(P_0)$ and thus $w_1=w>0$ on $\partial\Boy$,
$$
w_1 \geq 0 \quad\hbox{on }\bar B_{\rho y_0}(P_0),
$$
so that
\begin{equation}
\label{eq:w1-inequality}
0 \leq w_1 \leq w \quad\hbox{on }\bar B_{\rho y_0}(P_0),
\end{equation}
and hence
\begin{equation}
\label{eq:w2-inequality}
0 \leq w_2 \leq w \quad\hbox{on }\bar B_{\rho y_0}(P_0).
\end{equation}
The inequality \eqref{eq:w1-inequality} obeyed by $w_1$ and the definition \eqref{eq:defn-w} of $w$ yield,
$$
w_1(P_0) \leq  w(P_0) = \zeta(P_0),
$$
and thus, by \eqref{eqn-zeta-bound},
\begin{equation}
\label{eq:w1P0bound}
w_1(P_0) \leq  14M y_0\rho^2.
\end{equation}
Consider the rescaled solution,
\begin{equation}
\label{eq:defn-rescaledw1}
\bar w_1(x,y) := w_1(x_0 + y_0x, y_0 + y_0y), \quad\forall (x,y) \in \bar B_\rho,
\end{equation}
and observe that the function $w_1\in C^{2,\alpha}(B_\rho)\cap C(\bar B_\rho)$, by \eqref{eq:LuLvidentity} and \eqref{eq:BVPw1}, satisfies the uniformly  elliptic  equation,
$$
L_{y_0} \bar w_1=0 \quad\hbox{on } B_\rho.
$$
The Harnack inequality \cite[Corollary 9.25 \& Equation (9.47)]{GilbargTrudinger}, the definition \eqref{eq:defn-rescaledw1} of $\bar w_1$, and the inequality \eqref{eq:w1P0bound} imply the estimate,
$$
\sup_{B_{\rho/2}} \bar  w_1 \leq C'\inf_{B_{\rho/2}} \bar  w_1 \leq C'\bar w_1(0) = C'w_1(P_0) \leq CMy_0\rho^2,
$$
for constants $C'$ and $C=14C'$ which depend at most on the coefficients of $L$, but are independent of $y_0$, and the constant $M$ is given by \eqref{eqn-defnM}. Hence, by \eqref{eq:defn-rescaledw1},
\begin{equation}
\label{eq:sup-bound-w1}
\sup_{B_{\rho y_0/2}(P_0)} w_1 \leq  CMy_0\rho^2, \quad 0 < \rho < \rho_0.
\end{equation}
We will next bound  $w_2$ on $B_{\rho y_0}(P_0)$, taking care to note that (like the regularity of $u$ in Theorem \ref{thm-c11}) $w_2$ only belongs to $H^2(B_{\rho y_0}(P_0))\cap C(\bar B_{\rho y_0}(P_0))$. Recall that $0 \leq w_2 \leq w$ on $\Boy$ by \eqref{eq:w2-inequality} and that $w_2=0$ on  $\partial \Boy$ by \eqref{eq:split-w} and \eqref{eq:BVPw1}. Assume that  $P_1=(x_1,y_1)$ is a  maximum point for the function  $w_2$ on  the closure of the ball $B_{\rho y_0}(P_0)$ and that $w_2(P_1) >0$. Then, $P_1 \in \Boy$ and we consider two cases.

\begin{case}[$P_1 \in \sE(u)$]
If  $P_1 \in \sE(u)$ (where $ u = \psi$), then  $u(P_1)=\psi(P_1)$ and hence, by the inequalities  \eqref{eqn-zeta-bound}, \eqref{eqn-taylor1}, \eqref{eq:w2-inequality}, and definition \eqref{eq:defn-w} of $w$,  we have
$$
w_2(P_1) \leq  w(P_1) =\psi(P_1) - \lo(P_1) + \zeta(P_1) \leq 16 M y_0  \rho^2,
$$
provided the constant $K$ in the definition \eqref{eqn-defnM} of $M$ is chosen large enough that $K>2$.
\end{case}

\begin{case}[$P_1 \in \sC(u)$]
If $P_1 \in \sC(u) $ (where $ u > \psi$) then, since $Lw_2=0$ on the open set $\sC(u) \cap \Boy$ and $w_2$ achieves an interior maximum there, the strong maximum principle \cite[Theorem 3.5]{GilbargTrudinger} implies that $w_2$ must be constant on the connected component of $\sC(u) \cap \Boy$ containing $P_1$. Since $w_2 =0$ on $\partial\Boy$
%COMMENT PF5.12.2012: Corrected from $w_2 =0$ on $\Boy$
and $w_2(P_1) >0$ by assumption, it follows that  $w_2(P_1) = w_2(P_2)$ for some point $P_2 \in \sE(u) \cap \Boy$. (Recall that, by hypothesis, $P_0\in\sF(u)$ and so $\sE(u) \cap \Boy$ is non-empty.) Thus, by the inequalities  \eqref{eqn-zeta-bound}, \eqref{eqn-taylor1}, \eqref{eq:w2-inequality}, and definition \eqref{eq:defn-w} of $w$, we have
$$
w_2(P_1) =w_2(P_2) \leq  w(P_2) =\psi(P_2) - \lo(P_2) + \zeta(P_2) \leq 16 M y_0  \rho^2,
$$
provided the constant $K$ in the definition \eqref{eqn-defnM} of $M$ is chosen large enough that $K>2$.
\end{case}

By combining the two cases and recalling that $w_2\leq w_2(P_1)$ on $B_{\rho y_0}(P_0)$, we obtain
\begin{equation}
\label{eq:sup-bound-w2}
\sup_{B_{\rho y_0/2}(P_0)} w_2\leq 16 M y_0\rho^2, \quad 0 < \rho < \rho_0.
\end{equation}
By combining the supremum bounds \eqref{eq:sup-bound-w1} and \eqref{eq:sup-bound-w2} for $w_1$ and $w_2$, respectively, we obtain
\begin{equation}
\label{eqn-wb1}
w \leq CM y_0\rho^2 \quad \mbox{on } B_{\rho y_0/2}(P_0), \quad 0 < \rho < \rho_0,
\end{equation}
where $C$ depends at most on the coefficients of $L$, and $M$ is given by \eqref{eqn-defnM}. This shows, in particular, by \eqref{eqn-zeta-bound} and \eqref{eq:defn-w}, that
$$
u -\lo \leq CMy_0 \rho^2 \quad \mbox{on } B_{\rho y_0/2}(P_0),
$$
where $C$ depends at most on the coefficients of $L$. Now, again using \eqref{eqn-taylor1}, we have
$$
u-\psi = u-\lo + \lo  -  \psi \leq CMy_0\rho^2 \quad \mbox{on } B_{\rho y_0/2}(P_0), \quad 0 < \rho < \rho_0,
$$
for a possibly larger constant $C$ that depends at most on the coefficients of $L$, and this gives the desired bound \eqref{eqn-bound1}.
\end{proof}

We will next establish a supremum bound for the solution, $u$, which holds near $y=0$ and is independent of the $y_0$ coordinate of the point $P_0$.

\begin{prop}[Linear growth of solution near free and degenerate boundaries]
\label{prop-sup2}
Let $u$ be as in Theorem \ref{thm-c11} and let $P_0=(x_0,y_0) \in \F \cap V$ with $0 \leq  y_0 <\theta /4$, where $\theta>0$ is a coefficient of $L$ in \eqref{eqn-oper}. Then, there are a constant $0 < \rho_0 <1$ and  a constant $0<C < \infty$, depending at most on the coefficients of $L$, such that if $B_{\rho_0}^+(P_0) \Subset V\cup\Gamma_0$, then
%COMMENT  PF5.13.2012: We're allowing $B_{\rho_0}^+(P_0) \Subset V\cup\Gamma_0$ rather than just $B_{\rho_0}^+(P_0) \Subset V$.
\begin{equation}
\label{eqn-bound212}
\sup_{B_{\rho/2}^+(P_0)} (u - \psi) \leq  C\rho\| \psi \|_{C^{1,1}(\bar B^+_{\rho_0}(P_0))}, \quad  0 < \rho < \rho_0.
\end{equation}
%COMMENT PF5.21.2012: We actually prove estimate with B_{\rho/2}^+(P_0) and not B_\rho^+(P_0) on LHS
\end{prop}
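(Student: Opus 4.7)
My plan is to adapt the proof of Proposition \ref{prop-sup1} (Caffarelli's strategy) to the degenerate regime $y_0 < \theta/4$, with two essential conceptual changes. First, the natural growth of a $C^{1,1}_s$ function near $\Gamma_0$ is linear in $\rho$ rather than quadratic, consistent with the cycloidal metric \eqref{eqn-dist}. Second, because $y_0$ may be arbitrarily small or zero, the $y_0$-rescaling \eqref{eq:LuLvidentity} is no longer available; instead, the strictly positive drift coefficient of $u_y$, namely $\kappa(\theta-y) \geq \kappa\theta/2$ on $B^+_{\rho_0}(P_0)$ when $\rho_0 < \theta/4$, will play the role previously played by the uniform ellipticity constant after rescaling.

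Let $\ell_{P_0}$ be the linear approximation of $\psi$ at $P_0$ as in \eqref{eq:ObstacleLinearApprox}. A direct computation on $B^+_{\rho_0}(P_0)$ yields $|L(\ell_{P_0})| \leq M$ where $M := K\|\psi\|_{C^{1,1}(\bar B^+_{\rho_0}(P_0))}$ and $K = K(L)$. For $0 < \rho < \rho_0$, I take $\zeta \in C^{2+\alpha}_s(B^+_\rho(P_0)\cup\Gamma_0)\cap C(\bar B^+_\rho(P_0))$ to be the unique solution, provided by Theorem \ref{thm-dp-C0BoundaryData-existence}, of the degenerate Dirichlet problem
\begin{equation*}
L\zeta = L(\ell_{P_0}) \quad \text{in } B^+_\rho(P_0), \qquad \zeta = C_0 M\rho \quad \text{on } \HH \cap \partial B^+_\rho(P_0),
\end{equation*}
with no datum on $\Gamma_0$, for a constant $C_0 = C_0(L) > 0$ to be chosen. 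The analog of Lemma \ref{lem-zeta} to be proved is
$M\rho \leq \zeta \leq C M\rho$ on $B^+_\rho(P_0)$ for $0 < \rho < \rho_0$,
with $C = C(L)$. For this I use affine barriers of the form $\vartheta(x,y) = aM\rho \pm bM(y-y_0)$; the crucial identity is
$L(y-y_0) = \kappa(\theta-y) - r(y-y_0) \geq \kappa\theta/4$ on $B^+_\rho(P_0)$ for $\rho_0$ small (depending on $\kappa,\theta,r$), which absorbs the source $L(\ell_{P_0})$ and lets the weak maximum principle (Theorem \ref{thm-wmp}) deliver both inequalities.

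Set $w := u - \ell_{P_0} + \zeta$. Taylor's theorem gives $|\psi - \ell_{P_0}| \leq \tfrac12 \|\psi\|_{C^{1,1}}\rho^2$, and combining with $u \geq \psi$ and $\zeta \geq M\rho$ yields $w > 0$ on $\bar B^+_\rho(P_0)$ for $\rho$ small, while $Lw = Lu \leq 0$ a.e.\ by \eqref{eqn-obst3}. Split $w = w_1 + w_2$ where $w_1$ solves $Lw_1 = 0$ on $B^+_\rho(P_0)$ with $w_1 = w$ on $\HH\cap\partial B^+_\rho(P_0)$ (existence via Theorem \ref{thm-dp-C0BoundaryData-existence}); Theorem \ref{thm-wmp} yields $0 \leq w_1, w_2 \leq w$ and $w_2 = 0$ on $\HH\cap\partial B^+_\rho(P_0)$. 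Using $u(P_0) = \psi(P_0) = \ell_{P_0}(P_0)$ gives $w_1(P_0) \leq w(P_0) = \zeta(P_0) \leq CM\rho$; I then invoke a Harnack inequality for $L$ valid up to $\Gamma_0$ (from the regularity theory in \cite{Feehan_Pop_regularityweaksoln}) applied to the non-negative solution $w_1$ to conclude $\sup_{B^+_{\rho/2}(P_0)} w_1 \leq CM\rho$. For $w_2$, let $P_1$ be a maximum point in $B^+_\rho(P_0)\cup\Gamma_0$ and argue exactly as in Proposition \ref{prop-sup1}: if $P_1 \in \sE(u)$, the bound $w_2(P_1) \leq CM\rho$ is immediate from $u(P_1) = \psi(P_1)$, Taylor, and the upper bound on $\zeta$; if $P_1 \in \sC(u)$, the strong maximum principle (Theorem \ref{thm-smp}) forces $w_2 \equiv w_2(P_1)$ on the connected component $K$ of $\sC(u)\cap(B^+_\rho(P_0)\cup\Gamma_0)$ containing $P_1$, and since $B^+_\rho(P_0)\cup\Gamma_0$ is connected while $P_0 \in \sE(u)$ lies outside $K$, $K$ cannot be clopen, so $\partial K$ meets $\sE(u)$ inside the domain and the previous subcase applies by continuity. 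Combining these bounds gives $\sup w \leq CM\rho$ on $B^+_{\rho/2}(P_0)$, and \eqref{eqn-bound212} follows from the upper bound on $\zeta$ and the Taylor estimate on $\psi - \ell_{P_0}$.

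The two main obstacles are the barrier construction for the analog of Lemma \ref{lem-zeta} (which must exploit the first-order drift of $L$, since rescaling to a uniformly elliptic operator is impossible) and the invocation of a Harnack inequality for $L$ that is valid up to and across the degenerate boundary $\Gamma_0$; the latter is the genuinely new ingredient relative to Proposition \ref{prop-sup1}.
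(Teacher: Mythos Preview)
Your proposal is correct and follows the same outline as the paper. The one difference worth noting is that the paper subtracts $\psi$ itself rather than its linearization: it sets $w := u - \psi + \xi$, where $\xi$ solves $L\xi = L\psi$ on $B^+_\rho(P_0)$ with $\xi = 10N\rho$ on $\HH\cap\partial B^+_\rho(P_0)$ and $N := K'\|\psi\|_{C^{1,1}(\bar B^+_{\rho_0}(P_0))}$. This buys two small simplifications: positivity of $w$ is immediate from $u \geq \psi$ and $\xi \geq N\rho$ (no Taylor remainder to absorb), and at any coincidence point $P_2 \in \sE(u)$ one reads off $w(P_2) = \xi(P_2) \leq 20N\rho$ directly. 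Otherwise the arguments coincide: the linear-in-$y$ barriers for $\xi$ (the paper uses $\xi \pm cN(y - y_0 + \rho) + \text{const}$, which is equivalent to your $aM\rho \pm bM(y-y_0)$), the splitting $w = w_1 + w_2$, the Harnack inequality up to $\Gamma_0$ from \cite{Feehan_Pop_regularityweaksoln} applied to $w_1$ after the $\rho$-rescaling $(x,y)\mapsto(x_0+\rho x,\,y_0+\rho y)$, and the strong maximum principle (Theorem~\ref{thm-smp}) for $w_2$.
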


\begin{figure}[htbp]
\centering
\begin{picture}(200,200)(0,0)
\put(0,0){\includegraphics[height=200pt]{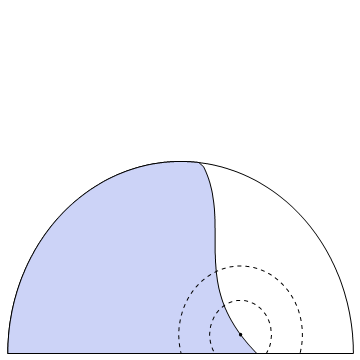}}
\put(45,95){$\scriptstyle V$}
\put(30,20){$\scriptstyle \sE(u)$}
\put(135,60){$\scriptstyle \sC(u)$}
\put(122,12){$\scriptstyle P_0$}
\put(150,25){$\scriptstyle \rho$}
\put(167,30){$\scriptstyle \rho_0$}
\end{picture}
\caption{Regions in the proof of Proposition \ref{prop-sup2} for estimating the growth of a solution near the free boundary and near the degenerate boundary.}
\label{fig:c11_growth_near_degen}
\end{figure}

Our proof of Proposition \ref{prop-sup2} follows the pattern of the proof of Proposition \ref{prop-sup1}. (See Fig. \ref{fig:c11_growth_near_degen}.) However, we shall use a different scaling. Observe that
\begin{equation}
\label{eq:LpsiBound}
|L \psi| \leq N\kappa \theta  \quad \mbox{on } B^+_{\rho_0}(P_0),
\end{equation}
where $\kappa>0, \theta>0$ are coefficients of $L$ in \eqref{eqn-oper} and
\begin{equation}
\label{eq:LpsiConstantDefn}
N := K'\| \psi \|_{C^{1,1}(\bar B^+_{\rho_0}(P_0))},
\end{equation}
where $K'$ is a constant which depends at most on the coefficients of $L$ in \eqref{eqn-oper} (remember that $0 \leq  y_0 <\theta /4$ and $0 < \rho_0 <1$).
%COMMENT PF5.14.2012: A slight improvement is possible here, in that C^{1,1} above could be replaced by C^{1,1}_s

For $0<\rho<\rho_0$, let $\xi \in C^{2+\alpha}_s(\Bop\cup\Gamma_0) \cap C_b(B^+_\rho(P_0)\cup\Gamma_1)$ be the solution to the boundary value problem,
\begin{equation}
\label{eqn-xi}
\begin{cases}
L\xi = L\psi &\mbox{on } \Bop,
\\
\xi =  10 N  \rho &\mbox{on } \HH\cap\partial \Bo,
\end{cases}
\end{equation}
provided by Theorem \ref{thm-dp-C0BoundaryData-existence}.
%COMMENT PF5.14.2012: Note that we need psi at least in C^{2+\alpha}_s(V) here
%COMMENT PF5.14.2012: We could claim \xi in C^{2+\alpha}_s(\bar V) as same true of \xi in C^{2+\alpha}_s(\bar V), but should only need \xi in C^{2+\alpha}_s(V) if that matters ...

\begin{lem}[Linear growth of an auxiliary function near free and degenerate boundaries]
\label{lem-xi}
The function $\xi$ given by \eqref{eqn-xi} satisfies the bound,
\begin{equation}
\label{eqn-xi-bound}
N   \rho  \leq \xi  \leq 20 N  \rho \quad \mbox{\emph{on} }\Bop, \quad 0 < \rho <\rho_0,
\end{equation}
where $N$ is as in \eqref{eq:LpsiBound} and $\rho_0 <1$ is a constant depending at most on the coefficients of $L$.
\end{lem}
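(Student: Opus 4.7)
The plan is to adapt the barrier strategy of Lemma \ref{lem-zeta}, but the rescaling $\bar\zeta = \zeta/(My_0)$ used there becomes singular as $y_0\to 0$, so I will instead work directly in the original variables and construct explicit affine-in-$y$ barriers. The structural observation that makes this feasible is that, under the standing hypotheses $y_0<\theta/4$ and $\rho<\rho_0\le\theta/4$, one has $y\le y_0+\rho\le\theta/2$ everywhere on $\Bop$, so the first-order coefficient $\kappa(\theta-y)$ of $L$ is bounded below by $\kappa\theta/2$; an affine function of $y$ with slope $\mp 4N$ therefore has $L$-action of leading order $\mp 2N\kappa\theta$, which is exactly what is needed to dominate the source bound $|L\psi|\le N\kappa\theta$ from \eqref{eq:LpsiBound}.

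Concretely, I would set
\[
\vartheta_+(x,y) := 14N\rho + 4N(y_0 - y), \qquad \vartheta_-(x,y) := 6N\rho + 4N(y - y_0).
\]
Both are smooth, both equal $10N\rho$ at $y=y_0+\rho$, and by monotonicity in $y$ one checks $\vartheta_+\ge 10N\rho$ and $\vartheta_-\le 10N\rho$ on the entire spherical boundary $\HH\cap\partial\Bo$. Since $\vartheta_{\pm,x}=0$ and $\vartheta_{\pm,yy}=0$, only the $\kappa(\theta-y)\vartheta_{\pm,y}$ and $-r\vartheta_\pm$ terms of $L$ in \eqref{eqn-oper} survive; a direct calculation (collected around the anchor $y=y_0+\rho$) yields
\[
L\vartheta_\pm \;=\; \mp\, 4N\kappa(\theta - y_0 - \rho) \;-\; 10rN\rho \;\pm\; 4N(\kappa + r)(y - y_0 - \rho).
\]
Since $y - y_0 - \rho\le 0$ on $\bar\Bop$, the last term is $\le 0$ for $\vartheta_+$ and $\ge 0$ for $\vartheta_-$; using $\theta - y_0 - \rho\ge\theta/2$ we therefore obtain
\[
L\vartheta_+ \le -2N\kappa\theta - 10rN\rho \le -N\kappa\theta, \qquad L\vartheta_- \ge 2N\kappa\theta - 10rN\rho,
\]
the second of which exceeds $N\kappa\theta$ provided $\rho_0\le \kappa\theta/(10r)$ (the constraint is vacuous if $r=0$).

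Combining these barrier inequalities with $|L\psi|\le N\kappa\theta$ gives $L(\xi - \vartheta_+) = L\psi - L\vartheta_+ \ge 0$ and $\xi - \vartheta_+ \le 10N\rho - 10N\rho = 0$ on $\HH\cap\partial\Bo$; hence Theorem \ref{thm-wmp} (which requires no condition on the degenerate portion $\{y=0\}\cap\bar\Bo\subset\bar\Gamma_0$) yields $\xi\le\vartheta_+$ on $\Bop$, and the symmetric argument applied to $\vartheta_--\xi$ gives $\xi\ge\vartheta_-$. Each $\vartheta_\pm$ depends only on $y$, so its extremum over $\Bop$ is attained at $y=\max\{0,y_0-\rho\}$, producing $\sup_{\Bop}\vartheta_+\le 18N\rho$ and $\inf_{\Bop}\vartheta_-\ge 2N\rho$; hence $2N\rho\le\xi\le 18N\rho$ on $\Bop$, which is \eqref{eqn-xi-bound} with slack.

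The main subtlety is the slope coefficient in the barriers. A naive slope $\pm 2N$ would give only $L\vartheta_-\ge N\kappa\theta - 10rN\rho$, just failing the threshold $N\kappa\theta$ because of the zeroth-order term $-r\vartheta_-$; doubling the slope to $\pm 4N$ doubles the coercive contribution $4N\kappa(\theta-y_0-\rho)\ge 2N\kappa\theta$, which then absorbs the $10rN\rho$ loss once $\rho_0\le\kappa\theta/(10r)$. This obstruction is specific to the degenerate setting of Lemma \ref{lem-xi}: in Lemma \ref{lem-zeta} the rescaling replaces the zeroth-order coefficient $r$ by $ry_0$, which is then absorbed into an error of size $r\rho^2 y_0$, so the analogous complication does not appear there.
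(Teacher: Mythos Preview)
Your proof is correct and follows essentially the same strategy as the paper: affine-in-$y$ barriers combined with the weak maximum principle (Theorem \ref{thm-wmp}), exploiting that $\kappa(\theta-y)\ge\kappa\theta/2$ on $\Bop$ when $y_0<\theta/4$ and $\rho_0\le\theta/4$. The paper's barriers are $18N\rho+2N(y_0-y)$ and $5N\rho+4N(y-y_0)$ (packaged as $z=\xi+2N(y-y_0+\rho)-20N\rho$ and $z=\xi-4N(y-y_0+\rho)-N\rho$), differing from yours only in the specific constants; your choice of slope $4N$ in both barriers yields the slightly sharper range $2N\rho\le\xi\le 18N\rho$.
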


\begin{proof}
We first establish the bound from above. We set
$$
z:=\xi + 2 N (y-y_0+\rho)-20N\rho \in C^{2+\alpha}_s(\Bop\cup\Gamma_0) \cap C(\bar B^+_\rho(P_0)),
$$
and use \eqref{eqn-oper} to compute that
\begin{align*}
Lz &= L\xi + 2 N \kappa (\theta -y) - 2  N r  (y-y_0+\rho) + 20N r \rho
\\
&\geq  - N \kappa \theta + 2 N \kappa (\theta -y) - 2  N r  (y-y_0+\rho) + 20N r \rho \quad\hbox{(by \eqref{eq:LpsiBound} and \eqref{eqn-xi})}
\\
&\geq  - N \kappa \theta + N \kappa\theta + 16N r \rho
\\
&\geq 0 \quad\hbox{on }\Bop,
\end{align*}
if $0 \leq y_0 \leq \theta /4$ and $\rho \leq \rho_0$ with $\rho_0 \leq  \min \{ \theta/4,1 \}$ and noting that $0\leq y < y_0+\rho \leq \theta/2$. On the other hand, since $\xi=10 N \rho$ on $\HH\cap\partial \Bop$ by \eqref{eqn-xi}, we have
\begin{align*}
z&=\xi + 2N(y-y_0+\rho) - 20N\rho
\\
&\leq  10 N \rho + 4 N \rho -20N\rho
\\
&\leq  0 \quad\hbox{on } \HH\cap\partial \Bop.
\end{align*}
Hence, the weak  maximum principle for $L$ on $\Bop$ (Theorem \ref{thm-wmp}), implies that $z \leq 0$ on $\Bop$, which implies the desired upper bound in \eqref{eqn-xi-bound},
$$
\xi = z - 2N(y-y_0+\rho) + 20N\rho \leq 20 N\rho \quad\hbox{on } \Bop,
$$
since $y-y_0+\rho \geq -\rho+\rho = 0$ on $\Bop$.

For the bound from below, we now set
$$
z:=\xi - 4  N  (y-y_0+\rho) - N \rho,
$$
and use \eqref{eqn-oper} to compute that
%COMMENT PF5.14.2012: Corrected sign errors in comparison below
\begin{align*}
Lz &= L\xi - 4 N \kappa (\theta -y)  + 4  N r  (y-y_0+\rho) + N r \rho
\\
&\leq   N \kappa \theta - 4 N \kappa (\theta -y)  + 4  N r  (y-y_0+\rho) + N r \rho \quad\hbox{(by \eqref{eq:LpsiBound} and \eqref{eqn-xi})}
\\
&\leq   N \kappa \theta - 2N \kappa\theta  + 9N r\rho
\\
&\leq   N \kappa \theta    -  2N \kappa \theta +  N \kappa \theta
\\
&\leq 0 \quad\hbox{on }  \Bop,
\end{align*}
%COMMENT PF5.14.2012: Corrected \rho_0 choice below
if $0 \leq y_0 \leq \theta/4 $ and $\rho \leq \rho_0$ with $\rho_0 \leq \min \{ \theta/4,\kappa\theta/(9r),1\} $ and noting that $0\leq y < y_0+\rho \leq \theta/2$.  On the other hand, since $\xi=10 N \rho$ on $\HH\cap\partial \Bop$ by \eqref{eqn-xi}, we have
\begin{align*}
z&=\xi - 4N(y-y_0+\rho)- N\rho
\\
&\geq  10 N \rho - 8 N \rho  - N \rho
\\
&\geq  0 \quad\hbox{on } \HH\cap\partial \Bop.
\end{align*}
%COMMENT PF5.14.2012: Condition below redundant
%provided that $\rho \leq \rho_0 \leq  1/8$.
%COMMENT PF5.14.2012: Corrections below
The weak  maximum principle for $L$ on $\Bop$ (Theorem \ref{thm-wmp}) once more shows that $z \geq  0$ on  $\Bop$.  We conclude that
$$
\xi = z + 4 N(y-y_0+\rho) + N \rho \geq N \rho \quad \mbox{on } \Bop,
$$
provided that $\rho < \rho_0$, with $\rho_0 <1$ depending at most on the coefficients of $L$. This yields the desired upper bound in \eqref{eqn-xi-bound} and finishes  the proof of the lemma.
\end{proof}

We will now give the proof of Proposition \ref{prop-sup2}.

\begin{proof}[Proof of Proposition \ref{prop-sup2}]
We give an argument which is similar to the one used in the proof of Proposition \ref{prop-sup1} but we scale our estimate differently and use Lemma \ref{lem-xi} instead of Lemma \ref{lem-zeta}. We set
\begin{equation}
\label{eq:sup2-wdefn}
w:=u - \psi + \xi \in H^2(\Bop,\fw) \cap C(\bar B^+_\rho(P_0)),
\end{equation}
with $\xi \in C^{2+\alpha}_s(\Bop\cup\Gamma_0) \cap C_b(B^+_\rho(P_0)\cup\Gamma_1)$ given by \eqref{eqn-xi}. Then $w$ satisfies
\begin{equation}
\label{eq:Lw-eqn}
Lw  = Lu \quad \mbox{a.e. on } \Bop.
\end{equation}
Let us now split $w$ as  $w=w_1 + w_2$, where $w_1 \in C^{2+\alpha}_s(\Bop\cup\Gamma_0) \cap C_b(B^+_\rho(P_0)\cup\Gamma_1)$ (whose existence is assured by Theorem \ref{thm-dp-C0BoundaryData-existence}) is defined by
\begin{equation}
\label{eq:sup2-w1defn}
\begin{cases}
L w_1 =0 &\mbox{on } \Bop,
\\
w_1 =w &\mbox{on } \HH\cap\partial\Bop.
\end{cases}
\end{equation}
By the weak maximum principle for $L$ on $\Bop$ (Theorem \ref{thm-wmp}) and the fact that  $w \geq \xi \geq 0$ on $\Bop$, we have
\begin{equation}
\label{eq:sup2-w1bounds}
0 \leq w_1 \leq w \quad \mbox{on } \Bop,
\end{equation}
and thus
\begin{equation}
\label{eq:sup2-w2bounds}
0 \leq w_2 \leq w \quad \mbox{on } \Bop.
\end{equation}
From \eqref{eqn-xi-bound}, \eqref{eq:sup2-wdefn}, \eqref{eq:sup2-w1bounds}, and the fact that $u(P_0)=\psi(P_0)$, we see that
\begin{equation}
\label{eq:sup2-w1P0growth}
w_1(P_0) \leq  w(P_0) = \xi(P_0) \leq 20N \rho \quad \mbox{on } \Bop.
\end{equation}
Set $(x_\rho,y_\rho):=(x_0+\rho y, y_0+\rho y)$ and consider the rescaled solution,
$$
\bar w_1(x,y) := w_1(x_0 + \rho x, y_0 + \rho y), \quad (x,y) \in B_1 \cap \{ y_\rho \geq 0 \},
$$
which satisfies the equation,
\begin{equation}
\label{eq:Lrhobarw1-eqn}
L_\rho \bar w_1=0 \quad\hbox{on } B_1 \cap \{ y_\rho >0 \},
\end{equation}
where (compare \eqref{eqn-Lv})
\begin{equation}
\label{eqn-Lr}
\begin{aligned}
L_\rho  v &:= \frac{y_\rho}{2\rho}\left(v_{xx} + 2\varrho\sigma v_{xy} + \sigma^2v_{yy}\right) + (r- q- y_\rho)v_x
\\
&\qquad + \kappa (\theta - y_\rho)v_y - r \rho v, \quad\forall v\in C^\infty(\HH),
\end{aligned}
\end{equation}
and using the fact that (compare \eqref{eq:LuLvidentity})
$$
\rho (Lw_1)(x_\rho,y_\rho) = (L_\rho \bar w_1)(x,y).
$$
From \eqref{eq:Lrhobarw1-eqn}, the Harnack inequality\footnote{See also \cite[Theorem 4.5.3]{Koch} for a version of the Harnack inequality for the linearization of the parabolic porous medium equation.} \cite[Theorem 1.16]{Feehan_Pop_regularityweaksoln} yields the estimate
$$
\sup_{B_{1/2}(0) \cap \{ y_\rho > 0 \}} \bar w_1 \leq C\inf_{B_{1/2}(0) \cap \{ y_\rho > 0 \}} \bar w_1 \leq C \bar w(0),
$$
for a  constant, $C$, depending at most on the coefficients of $L$. Combining the preceding inequality with \eqref{eq:sup2-w1P0growth} yields
$$
\sup_{B^+_{\rho/2} (P_0)} w_1\leq C w(P_0) \leq  20 C N \rho,
$$
that is,
\begin{equation}
\label{eq:sup2-bound-w1}
\sup_{B^+_{\rho/2} (P_0)} w_1 \leq  20 C N \rho, \quad 0 < \rho < \rho_0,
\end{equation}
for a constant, $\rho_0$, depending at most on the coefficients of $L$.

We will next bound  $w_2$ on $B_\rho^+(P_0)$, following the same reasoning as in the proof of Proposition \ref{prop-sup1}. Recall that $0 \leq w_2 \leq w$ on $B_\rho^+(P_0)$ by \eqref{eq:sup2-w2bounds} and $w_2=0$  on $\HH\cap\partial B_\rho^+(P_0)$ by \eqref{eq:sup2-w1defn}. Assume that $P_1=(x_1,y_1)$ is a maximum point for the function $w_2$ on $\bar B_{\rho}^+(P_0)$ and that $w_2(P_1) >0$. Therefore, $P_1 \in B_{\rho}^+(P_0)\cup \Gamma_0$, where (by our convention) $\Gamma_0 = \{y=0\}\cap\partial B_{\rho}^+(P_0)$.

\setcounter{case}{0}
\begin{case}[$P_1 \in \sE(u)$]
If  $P_1 \in \sE(u)\cap (B_{\rho}^+(P_0)\cup\Gamma_0)$, then  $u(P_1)=\psi(P_1)$. Recalling that $w=u-\psi+\xi$ by \eqref{eq:sup2-wdefn}, we conclude from \eqref{eqn-xi-bound} and \eqref{eq:sup2-w2bounds} that at $P_1$ we have the bound
$$
w_2(P_1) \leq  w(P_1) = \xi(P_1)  \leq 20 N  \rho,
$$
provided $\rho< \rho_0$.
\end{case}

\begin{case}[$P_1 \in \sC(u)$]
If $P_1 \in \sC(u) \cap (B_{\rho}^+(P_0)\cup\Gamma_0)$ (where $ u > \psi$), then since $w_2 \in H^2(\Bop,\fw) \cap C(\bar B^+_\rho(P_0))$ obeys
$$
Lw_2=0 \quad\hbox{a.e. on }\sC(u) \cap B_{\rho}^+(P_0),
$$
by \eqref{eqn-obst3}, \eqref{eq:Lw-eqn}, and \eqref{eq:sup2-w1defn},
%COMMENT PDF 5.3.2012: Changed $\B_{\rho}(P_0)$ to $B_{\rho}^+(P_0)$ in preceding line
the regularity result in Proposition \ref{prop-c2a} implies that $w_2$ also belongs to $C^{2+\alpha}_s(\sC(u) \cap (B_{\rho}^+(P_0)\cup\Gamma_0))$. But $w_2$ achieves a positive maximum at $P_1\in\sC(u) \cap (B_{\rho}^+(P_0)\cup\Gamma_0)$ and so the strong maximum principle (Theorem \ref{thm-smp}) implies that $w_2$ must be constant on the connected component of $\sC(u) \cap (B_{\rho}^+(P_0)\cup\Gamma_0)$ containing $P_1$.  Since $w_2 =0$  on $\HH\cap\partial B_{\rho}^+(P_0)$ and $w_2(P_1) >0$, it follows that  $w_2(P_1) = w_2(P_2)$, for some point $P_2$ with $P_2 \in  \sE(u) \cap (B_{\rho}^+(P_0)\cup\Gamma_0)$. (Recall that, by hypothesis, $P_0\in\sF(u)$ and so $\sE(u) \cap (B_{\rho}^+(P_0)\cup\Gamma_0)$ is non-empty.)  We conclude that by \eqref{eqn-xi-bound}, \eqref{eq:sup2-wdefn}, \eqref{eq:sup2-w2bounds}, and the fact that $u(P_2)=\psi(P_2)$,
$$
w_2(P_1) = w_2(P_2) \leq w(P_2) = \xi(P_2)  \leq 20 N\rho,
$$
provided $\rho< \rho_0$.
\end{case}

Combining the two cases and recalling that $w_2\leq w_2(P_1)$ on $\bar B_{\rho}^+(P_0)$, by definition of $P_1$, yields
\begin{equation}
\label{eq:sup2-bound-w2}
\sup_{B_{\rho}^+(P_0)} w_2\leq 20 N\rho, \quad 0 < \rho < \rho_0.
\end{equation}
Combining the estimates \eqref{eq:sup2-bound-w1} and \eqref{eq:sup2-bound-w2}, respectively, for $w_1$ and $w_2$ yields
$$
\sup_{B^+_{\rho/2} (P_0)} w \leq C'N\rho, \quad 0 < \rho < \rho_0,
$$
for a constant $C'$ which depends at most on the coefficients of $L$, and $N$ is as in \eqref{eq:LpsiConstantDefn}. This yields the desired bound \eqref{eqn-bound212}.
\end{proof}

\begin{cor}[Linear growth of solution near free and degenerate boundaries]
\label{cor-sup3}
Under the hypotheses of Proposition \ref{prop-sup2},  there are a constant $0 < \rho_0 <1$ and a constant $0<C < \infty$, depending at most on the coefficients of $L$, such that
\begin{equation}
\label{eqn-bound3}
\sup_{B_{\rho/2}^+(P_0)} (u - \psi (P_0)) \leq  C\rho\| \psi \|_{C^{1,1}(\bar B^+_{\rho_0}(P_0))}, \quad 0 < \rho < \rho_0.
\end{equation}
\end{cor}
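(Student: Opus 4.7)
The plan is to reduce the claim to Proposition \ref{prop-sup2} by the obvious decomposition
\[
u - \psi(P_0) = (u - \psi) + (\psi - \psi(P_0)),
\]
and to bound each summand separately on $B_{\rho/2}^+(P_0)$.

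For the first summand, I would invoke Proposition \ref{prop-sup2} directly: choosing the same $\rho_0 \in (0,1)$ supplied there (depending only on the coefficients of $L$), we already have
\[
\sup_{B_{\rho/2}^+(P_0)}(u-\psi) \leq C\rho\,\|\psi\|_{C^{1,1}(\bar B^+_{\rho_0}(P_0))}, \quad 0 < \rho < \rho_0,
\]
for a constant $C$ depending only on the coefficients of $L$.

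For the second summand, note that the hypothesis $\psi \in C^{1,1}(\bar V) \supset C^{1,1}(\bar B^+_{\rho_0}(P_0))$ implies that $\psi$ is Lipschitz on $\bar B^+_{\rho_0}(P_0)$, with Lipschitz constant bounded by $\|\psi\|_{C^{1,1}(\bar B^+_{\rho_0}(P_0))}$. Hence, for every $P \in B_{\rho/2}^+(P_0)$, the mean value inequality yields
\[
|\psi(P) - \psi(P_0)| \leq \|D\psi\|_{L^\infty(\bar B^+_{\rho_0}(P_0))}\,|P-P_0| \leq \tfrac{\rho}{2}\,\|\psi\|_{C^{1,1}(\bar B^+_{\rho_0}(P_0))}.
\]

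Combining the two estimates gives
\[
\sup_{B_{\rho/2}^+(P_0)}\bigl(u - \psi(P_0)\bigr) \leq \Bigl(C + \tfrac{1}{2}\Bigr)\rho\,\|\psi\|_{C^{1,1}(\bar B^+_{\rho_0}(P_0))}, \quad 0 < \rho < \rho_0,
\]
which is \eqref{eqn-bound3} after relabeling the constant. There is no serious obstacle here: the corollary is just a cosmetic reformulation of Proposition \ref{prop-sup2} that replaces the function $\psi$ by its value at the free boundary point $P_0$, using only the Lipschitz regularity of $\psi$ inherited from its $C^{1,1}$ norm.
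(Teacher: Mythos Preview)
Your argument is correct and is precisely the intended one: the paper states Corollary~\ref{cor-sup3} without proof, treating it as an immediate consequence of Proposition~\ref{prop-sup2}, and your decomposition $u-\psi(P_0)=(u-\psi)+(\psi-\psi(P_0))$ together with the Lipschitz bound on $\psi$ is exactly what makes it immediate. One very minor remark: for the mean value inequality you implicitly use that the segment from $P_0$ to $P$ stays in $\bar B^+_{\rho_0}(P_0)$, which holds because this set is convex (the intersection of a ball with a half-plane) and contains $P_0$.
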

%COMMENT PF5.12.2012: We actually prove this estimate on the ball of radius \rho y_0/2, not rho y_0

\section{Proof of main theorem}
\label{sec:ProofMainTheorem}
We will establish in this section the $C^{1,1}_s$ regularity of our solution, $u$, in Theorem \ref{thm-c11}. For a much simpler example --- interior $C^{1,1}$ regularity for a solution, $u$, to $\min\{\Delta u-1, u\} = 0$ on a bounded domain in $\R^n$ --- but one which conveys some of the flavor of our proof of Theorem \ref{thm-c11}, see the proof of Theorem 1.1 in \cite[p. 11]{Petrosyan_2011_msrilectures}, which is based in turn on ideas of Caffarelli \cite{Caffarelli_jfa_1998}.

\begin{proof}[Proof of Theorem  \ref{thm-c11}]
Because of Proposition \ref{prop-fzero} we may assume without loss of generality that  $u \in H^2(V,\fw) \cap C(\bar V)$ is  a solution to the \emph{homogeneous} obstacle problem \eqref{eqn-obst_homog}  with obstacle function $\psi \in C_s^{2+\alpha}(\bar V)$ and $f=0$ on $V$. Recall that $V=B_{R_0}^+(Q_0)$ is as in \eqref{eq:domains}, for some $R_0 > 0$ and $Q_0=(p_0,q_0) \in \bar\HH$. We may also assume without loss of generality that
%COMMENT PF6.2.2012: Highlighted
$$
0<R_0\leq 1,
$$
%COMMENT PF5.15.2012: Changed \HH to \bar\HH here and R_1 < 1 to R_0 \leq 1
and also that
%COMMENT PF6.2.2012: Highlighted
$$
\Lambda=1 \quad\hbox{and}\quad 0 \leq q_0 \leq 1,
$$
since $L$ is uniformly elliptic on $V=B_{R_0}(Q_0)$ when $q_0 >1$ and standard results imply that $u\in C^{1,1}(\bar V)$ \cite[Theorem 4.38]{Troianiello}.

\setcounter{figure}{0}
\begin{figure}[htbp]
\centering
\begin{picture}(200,200)(0,0)
\put(0,0){\includegraphics[height=200pt]{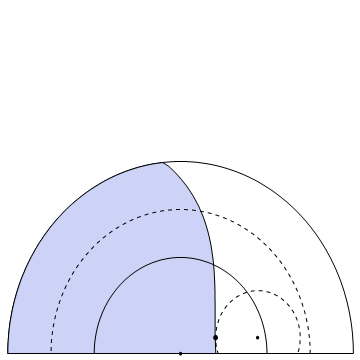}}
\put(45,95){$\scriptstyle V$}
\put(60,77){$\scriptstyle V_1$}
\put(80,55){$\scriptstyle U$}
\put(75,30){$\scriptstyle \sE(u)$}
\put(108,92){$\scriptstyle \sF(u)$}
\put(142,77){$\scriptstyle \sC(u)$}
\put(89,7){$\scriptstyle Q_0$}
\put(109,10){$\scriptstyle P_1$}
\put(132,10){$\scriptstyle P_0$}
\put(145,40){$\scriptstyle dy_0$}
\end{picture}
\caption{Regions in the proof of Theorem \ref{thm-c11} for estimating the $C^{1,1}_s$ norm of a solution.}
\label{fig:c11_estimate_regions}
\end{figure}

%COMMENT PF6.2.2012: we want to allow P_0 to meet the curved boundary of U; the flat boundary is addressed by independence of y_0 >0
Let $P_0=(x_0,y_0) \in \sC(u) \cap  \bar U$, where $U=B_{R_0/2}^+(Q_0)$ as in \eqref{eq:domains} with $R=R_0/2$; see Fig. \ref{fig:c11_estimate_regions}. Assuming without loss of generality that\footnote{Our assumptions so far that $P_0\in\bar U$ and $R_0=1$ ensure $0\leq y_0\leq 3/2$, but standard results apply when $y_0\geq 1$.}
%COMMENT PF6.2.2012: Highlighted \eqref{eq:y0leq1}
\begin{equation}
\label{eq:y0leq1}
0 < y_0 \leq 1,
\end{equation}
we  will  establish the bound
\begin{equation}
\label{eqn-c11}
y_0 |D^2 u(P_0)|  + |Du(P_0)| + |u(P_0)| \leq  C\left(\| u \|_{C(\bar V)} + \| \psi \|_{C^{1,1}(\bar V)}\right),
\end{equation}
where  the constant $C=C(L,R_0)$ may depend $R_0$ and  the coefficients of $L$. Since the constant $C$ will not depend on $y_0$ (if $y_0$ obeys \eqref{eq:y0leq1}) this will provide the desired $C^{1,1}_s$ bound on $u$ up to $y=0$. Set
%COMMENT PF6.2.2012: Highlighted
$$
V_1:=B_{3R_0/4}^+(Q_0).
$$
%COMMENT PF6.2.2012: Added details in paragraph below; note that, by definition, $\sE(u), \sC(u) \subset V\cup\Gamma_0$
Since $u$ is continuous on $\bar V$, the exercise region, $\E(u)$, as defined in \eqref{eqn-con}, is a relatively closed subset of $V\cup\Gamma_0$. We may suppose without loss of generality that
$$
\E(u)\cap V_1 \neq \emptyset.
$$
Otherwise, $V_1\subset\C(u)$, where $\C(u)$ is the continuation region, as defined in \eqref{eqn-om}, and because $Lu=0$ on $\C(u)$, Theorem \ref{thm-c11} would follow immediately from Proposition \ref{prop-c2a}.
Now let $d$ be the maximum number such that
\begin{equation}
\label{eq:defn-d}
\Qdy \cap \E(u) \cap \bar V_1 = \emptyset.
\end{equation}
Then there exists at least one point\footnote{We alert the reader that in section \ref{sec:SupBounds} we use $P_0$ to denote a point in $\sF(u)$ whereas in this section we use $P_0$ to denote a point in $\sC(u)$ and $P_1$ to denote a point in $\sF(u)$.}
\begin{equation}
\label{eq:defn-P1}
P_1=(x_1,y_1) \in \partial  \Qdy \cap \F \cap \bar V_1.
\end{equation}
Since $P_0\in \bar U$ and $P_1 \in \bar V_1$, we have $0<dy_0 \leq 5R_0/4$.

Throughout this section, we let $0<\rho_0<1$ denote the smaller of the two constants in Propositions \ref{prop-sup1} and \ref{prop-sup2} and, by replacing $\rho_0$ with a smaller constant if needed, we may assume that
%COMMENT PF 6.2.2012: Decreased size rho_0 to ensure \eqref{eqn-b11} holds
\begin{equation}
\label{eq-size-rho0}
0 < \rho_0 < \min\{1,R_0/5\}.
\end{equation}
We shall distinguish between three situations. We begin with the first situation.

%COMMENT 6.2.2012: Omitted U and its boundary circle as not to scale and already given in the first figure
\begin{figure}[htbp]
\centering
\begin{picture}(200,200)(0,0)
\put(0,0){\includegraphics[height=200pt]{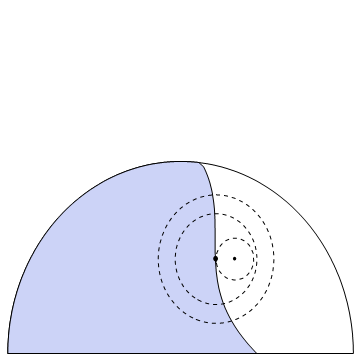}}
\put(45,95){$\scriptstyle V$}
%\put(60,80){$\scriptstyle U$}
\put(30,20){$\scriptstyle \sE(u)$}
\put(160,20){$\scriptstyle \sC(u)$}
\put(109,53){$\scriptstyle P_1$}
\put(131,53){$\scriptstyle P_0$}
\put(125,69){$\scriptstyle dy_0$}
\put(130,30){$\scriptstyle \rho y_1$}
\put(150,40){$\scriptstyle \rho_0 y_1$}
\end{picture}
\caption{Regions for Step \ref{step:pfthm-d_leq_quarterrhozero} (where $d \leq  {\rho_0}/4$) of the proof of Theorem \ref{thm-c11} for estimating the $C^{1,1}_s$ norm of a solution near the free boundary and away from the degenerate boundary.}
%(The size of $U$ relative to $V$ is not drawn to scale, since $U=B^+_{R_0/2}(Q_0)$ and $V=B^+_{R_0}(Q_0)$.)
\label{fig:c11_estimate_away_degen}
\end{figure}

\begin{step}[$d \leq  {\rho_0}/4$]
%COMMENT PF6.2.2012: We already chose \rho_0 < 1, so no need to repeat
%\begin{step}[$d \leq  {\rho_0}/4 < 1/4$]
\label{step:pfthm-d_leq_quarterrhozero}
%COMMENT PF6.1.2012: Moved justification of \eqref{eqn-b11} here, since only applies to this step
Since $\rho_0 < 1$, we have $B_{\rho_0 y_1}(P_1) \Subset \HH$ (unless $y_1=0$), while  $P_1\in \bar V_1$ implies  $\dist(P_1,\HH\cap\partial V) \geq R_0/4$; since we also have $\rho_0 < R_0/5$ by \eqref{eq-size-rho0}, we may conclude that
%COMMENT PF6.2.2012: corrected ball inclusion condition below
\begin{equation}
\label{eqn-b11}
B_{\rho_0 y_1}(P_1) \Subset V \quad \hbox{if }  0<y_1<\frac{5}{4}.
\end{equation}
Since $P_1=(x_1,y_1) \in \partial B_{dy_0}(P_0)$ and $P_0=(x_0,y_0)$, we have $|y_1-y_0| \leq y_0 d < y_0/4$ and thus
\begin{equation}
\label{eq:Comparey0y1}
3y_0 /4 < y_1 < 5y_0 / 4,
\end{equation}
and hence $y_0$ and $y_1$ are comparable. In particular,
%COMMENT PF6.2.2012: corrected ball inclusion condition below
we have  $0<y_1<5/4$ by \eqref{eq:y0leq1}, and hence by \eqref{eqn-b11} (see Fig. \ref{fig:c11_estimate_away_degen}), we see that
$$
B_{\rho_0 y_1}(P_1) \Subset V.
$$
Set  $\rho := 4d$  and let $\zeta \in C^{2,\alpha}(\bar{B}_{\rho y_1}(P_1))$ be the function defined  by \eqref{eqn-zeta0} (with $P_0$ and $y_0$ replaced by $P_1$ and $y_1$, respectively), that is,
%COMMENT 5.15.2012: This redefinition seems a bit duplicative? Only P_1 is different?
\begin{equation}
\label{eq:defn-zeta-pfthm}
\begin{cases}
L\zeta = L(l_{P_1}) &\hbox{on } B_{\rho y_1}(P_1),
\\
\zeta = 10 M y_1 \rho^2 &\hbox{on } \partial B_{\rho y_1}(P_1),
\end{cases}
\end{equation}
where (compare \eqref{eqn-defnM})
\begin{equation}
\label{eqn-defnM1}
M := K\| \psi \|_{C^{1,1}(\bar B_{\rho_0 y_1}(P_1))},
\end{equation}
%COMMENT PDF 5.15.2012: Keep y\rho order same in ball subscripts so we can spot discrepancies more easily
and $K$ is a constant which depends at most on the coefficients of $L$, and the following inequality holds (compare \eqref{eqn-lo2})
\begin{equation}
\label{eq:lo2-P1}
|L\zeta| = |L(l_{P_1})| \leq M \quad\hbox{on }B_{\rho y_1}(P_1).
\end{equation}
It follows from  \eqref{eqn-zeta-bound}  that
%COMMENT PF5.22.2012: We can have \rho\leq \rho_0
%COMMENT PF5.15.2012: Why all the duplication?
\begin{equation}
\label{eqn-bound33}
M y_1 \rho^2 \leq \zeta \leq 14 M y_1 \rho^2 \quad \mbox{on }B_{\rho y_1}(P_1).
\end{equation}
Moreover, since $P_1=(x_1,y_1) \in \partial  \Qdy$ and $3y_0 /4 < y_1$  and $d = \rho/4 <1/4$, we have $\dist(P_1,P_0)=dy_0$ and $2dy_0 \leq 8dy_1/3 = 2\rho y_1/3 \leq \rho y_1$, and thus  (see Fig. \ref{fig:c11_estimate_away_degen})
\begin{equation}
\label{eq:Ballinclusion1}
\Bdy \subset B_{\rho y_1}(P_1).
\end{equation}
Therefore, $\zeta$ is also defined on $\bar B_{dy_0}(P_0)$ and satisfies the bounds \eqref{eqn-bound33} on $B_{dy_0}(P_0)$
%COMMENT PF5.22.2012: Was $\Bdy$
with $\rho$ replaced by $d=\rho/4$:
$$
\frac{1}{16}M y_1 d^2 \leq \zeta \leq \frac{14}{16}M y_1 d^2 \quad \mbox{on }B_{dy_0}(P_0),
$$
and thus, applying \eqref{eq:Comparey0y1},
\begin{equation}
\label{eqn-bound33-onP0halfdy1ball}
\frac{3}{64}M y_0 d^2 \leq \zeta \leq \frac{70}{64}M y_0 d^2 \quad \mbox{on }B_{dy_0}(P_0).
\end{equation}
As in the proof of Proposition \ref{prop-sup1}, we set
\begin{equation}
\label{eq:defnw-case1-pfthm}
w:= u - l_{P_1}  + \zeta \in H^2(B_{\rho y_1}(P_1))\cap C(\bar B_{\rho y_1}(P_1)).
\end{equation}
The inequality \eqref{eqn-wb1} (with the role of $B_{\rho y_0/2}(P_0)$ there replaced by $B_{\rho y_1/2}(P_1)$) yields
$$
0 \leq w \leq CMy_1\rho^2 \quad \mbox{on } B_{\rho y_1/2}(P_1),
$$
%COMMENT PF5.22.2012: Was \Qdy below
and thus, since $y_1 \leq 5y_0/4$ by \eqref{eq:Comparey0y1} and $\rho = 4d$ and $B_{dy_0/2}(P_0) \subset B_{\rho y_1/2}(P_1)$ by \eqref{eq:Ballinclusion1},
\begin{equation}
\label{eq:wbounds-pfthm}
0 \leq w \leq CM y_0 d^2 \quad \mbox{on } B_{dy_0/{2}}(P_0),
\end{equation}
for a larger constant $C$ depending at most on the coefficients of $L$ and where $M$ is as in \eqref{eqn-defnM1} (compare \eqref{eqn-defnM}).

Because $\Qdy \subset\sC(u)$, we have $Lu=0$ on $\Qdy$, while $L(l_{P_1}) = L\zeta$ on $B_{\rho y_1}(P_1)$ by \eqref{eq:defn-zeta-pfthm}. It follows that
\begin{equation}
\label{eq:Lw}
Lw=0 \quad\hbox{on } \Qdy,
\end{equation}
since $\Qdy \subset B_{\rho y_1}(P_1)$ by \eqref{eq:Ballinclusion1}. Consider now the rescaled solution, $\bar w \in C^{2,\alpha}(\bar B_d)$, given by
\begin{equation}
\label{eq:defn-bw-pfthm}
\bar w(x,y) := w(x_0 + y_0 x, y_0 + y_0 y), \quad\forall (x,y) \in \bar B_d,
\end{equation}
to the uniformly elliptic equation
\begin{equation}
\label{eq:Lw_rescaled}
L_{y_0} \bar w = 0 \quad\hbox{on } B_d,
\end{equation}
where $B_d = B_d(0,0)$ and $L_{y_0}$ is given by \eqref{eqn-Lv}. The classical Schauder interior estimates for strictly elliptic equations \cite[Corollary 6.3]{GilbargTrudinger} yield
$$
d\|D \bar w\|_{C(\bar B_{d/4})} + d^2\|D^2\bar w\|_{C(\bar B_{d/4})} \leq C\| \bar w \|_{C(\bar B_{d/2})},
$$
for a constant $C$ depending at most on the coefficients of $L$ (noting that $d\leq 1/4$). Combining the preceding inequality with the inequalities \eqref{eq:wbounds-pfthm} for $w$ implies the bounds
$$
\frac 1d  |D \bar w(0)| + |D^2 \bar w(0) | \leq \frac C{d^2} \| \bar w \|_{C(\bar B_{d/2})}  \leq CMy_0,
$$
where $C$ depends at most on the coefficients of $L$, and $M$ is as in \eqref{eqn-defnM1}. Hence, since $D\bar w(0) = y_0Dw(P_0)$ and $D^2\bar w(0) = y_0^2D^2w(P_0)$ by \eqref{eq:defn-bw-pfthm}, we obtain
$$
\frac{y_0}d  |Dw(P_0)| + y_0^2|D^2w(P_0) | \leq CMy_0.
$$
We conclude that
\begin{equation}
\label{eqn-boundw33}
|Dw(P_0)| + y_0 |D^2 w(P_0) |  \leq CM,
\end{equation}
%COMMENT PF5.16.2012: We don't need to also bound w(P_0)| here, so omitted, though it follows here from eqref{eq:wbounds-pfthm} too
Similarly, the rescaled function $\bar\zeta \in C^{2,\alpha}(\bar B_d)$ given by
\begin{equation}
\label{eq:defn-bzeta-pfthm}
\bar \zeta(x,y) := \zeta (x_0 + y_0 x, y_0 + y_0 y), \quad\forall (x,y) \in \bar B_d
\end{equation}
satisfies the uniformly elliptic equation (see \eqref{eq:LuLvidentity})
$$
L_{y_0} \bar \zeta  = y_0L\zeta = y_0L(l_{P_1}) = y_0 f_1 \quad \mbox{on }B_d,
$$
where $f_1:=L(l_{P_1})$ is a smooth, linear function with
\begin{align*}
\| f_1 \|_{C^1(B_d)} &\leq C\left(|\psi(P_0)| + |D\psi(P_0)|\right)
\\
&\leq C\| \psi \|_{C^{1,1}(\bar B_{dy_0}(P_0))} \leq C\| \psi \|_{C^{1,1}(\bar B_{\rho_0 y_1}(P_1))}
\\
&= CM,
\end{align*}
while $C$ depends at most on the coefficients of $L$, and $M$ is as in \eqref{eqn-defnM1}.
%COMMENT PF6.1.2012: When the PDE is inhomogeneous and source non-constant, the scaling not so clear, so I prefer to leave in the details below
Define $\eta \in C^{2,\alpha}(\bar B_1)$ by
$$
\bar\zeta(x,y) =: d^2\eta(x/d,y/d), \quad (\bar x,\bar y) := (x/d,y/d) \in B_1,
$$
%so $D\bar\zeta(x,y) = dD\eta(\bar x,\bar y)$ and $D^2\bar\zeta(x,y) = D^2\eta(\bar x,\bar y)$.
The function $\eta$ obeys
\begin{align*}
(L_{y_0}\bar\zeta)(x,y) &= \frac{1+y}{2}\left(\bar\zeta_{xx} + 2\varrho\sigma\bar\zeta_{xy} + \sigma^2\bar\zeta_{yy}\right)(x,y)
+ \left(r-q - \frac{y_0(1+y)}{2}\right)\bar\zeta_x(x,y)
\\
&\qquad + \kappa\left(\theta - y_0(1+y)\right)\bar\zeta_y(x,y) - ry_0\bar\zeta(x,y)
\\
&= \frac{1+d\bar y}{2}\left(\eta_{\bar x\bar x} + 2\varrho\sigma\eta_{\bar x\bar y} + \sigma^2\eta_{\bar y\bar y}\right)(\bar x,\bar y)
+ d\left(r-q - \frac{y_0(1+d\bar y)}{2}\right)\eta_{\bar x}(\bar x,\bar y)
\\
&\qquad + d\kappa\left(\theta - y_0(1+d\bar y)\right)\eta_{\bar y}(\bar x,\bar y) - ry_0d^2\eta(\bar x,\bar y)
\\
&=: L_{y_0,d}\eta(\bar x,\bar y), \quad \forall (\bar x,\bar y)\in B_1,
\end{align*}
and
$$
L_{y_0,d}\eta(\bar x,\bar y) = y_0f_1(x,y) = y_0f_1(d\bar x,d\bar y) =: y_0\bar f_1(\bar x,\bar y), \quad \forall (\bar x,\bar y)\in B_1.
$$
We have $Df_1(x,y) = d^{-1}D\bar f_1(\bar x,\bar y)$ and so, noting that $0<d\leq 1/4$,
\begin{align*}
\| \bar f_1 \|_{C^1(B_1)} &= \| \bar f_1 \|_{C(\bar B_1)} + \| D\bar f_1 \|_{C(\bar B_1)}
\\
&=  \| f_1 \|_{C(\bar B_d)} + d\| Df_1 \|_{C(\bar B_d)} \leq CM.
\end{align*}
Applying the classical Schauder interior estimates \cite[Corollary 6.3]{GilbargTrudinger} to the solution $\eta$ to $L_{y_0,d}\eta = \bar f_1$ on $B_1$ gives
\begin{align*}
\|D \eta\|_{C(\bar B_{1/2})} + \|D^2\eta\|_{C(\bar B_{1/2})} &\leq C\left(\| \eta\|_{C(\bar B_1)} + y_0\| \bar f_1 \|_{C^1(B_1)}\right)
\\
&\leq C\left(\| \eta\|_{C(\bar B_1)} + My_0\right),
\end{align*}
for a constant $C$ depending at most on the coefficients of $L$ (recall that $0<y_0\leq 1$). Therefore, on $B_d$,
$$
d^{-1}\|D \bar \zeta\|_{C(\bar B_{d/2})} + \|D^2\bar \zeta\|_{C(\bar B_{d/2})} \leq C\left(d^{-2}\| \bar \zeta\|_{C(\bar B_d)} + My_0\right),
$$
for a constant $C$ depending at most on the coefficients of $L$. Combining the preceding inequality with the bound \eqref{eqn-bound33-onP0halfdy1ball} for $\zeta$ yields
\begin{align*}
d^{-1}|D\bar \zeta(0)|+  |D^2\bar \zeta(0)| &\leq  C\left(d^{-2}\| \bar \zeta  \|_{C(\bar B_d)} + My_0\right)
\\
&= C\left(d^{-2}\|\zeta  \|_{C(\bar B_{dy_0}(P_0))} + M y_0\right)
\\
&\leq CM y_0,
\end{align*}
for a larger constant $C$, but depending at most on the coefficients of $L$. Hence, since $D\bar \zeta(0) = y_0D\zeta(P_0)$ and $D^2\bar \zeta(0) = y_0^2D^2\zeta(P_0)$ by \eqref{eq:defn-bzeta-pfthm}, we obtain
$$
d^{-1}y_0|D\zeta(P_0)|+  y_0^2|D^2\zeta(P_0)| \leq CM y_0,
$$
and thus, noting that $4\leq d^{-1}$,
\begin{equation}
\label{eqn-boundz33}
|D\zeta (P_0)| + y_0 |D^2 \zeta(P_0) |  \leq CM.
\end{equation}
%COMMENT PF5.16.2012: Omitted  |\zeta (P_0)| bound, as not needed
Recalling that $w=u-l_{P_1} +\zeta$ by \eqref{eq:defnw-case1-pfthm}, we conclude from \eqref{eqn-boundw33} and \eqref{eqn-boundz33} that
$$
|Du(P_0)| + y_0 |D^2u(P_0) |  \leq CM,
$$
where $M$ is as in \eqref{eqn-defnM1} and so \eqref{eqn-c11} holds for this step.
\end{step}

We consider the second situation.
%COMMENT 6.2.2012: Omitted U and its boundary circle as not to scale and already given in the first figure
\begin{figure}[htbp]
\centering
\begin{picture}(200,200)(0,0)
\put(0,0){\includegraphics[height=200pt]{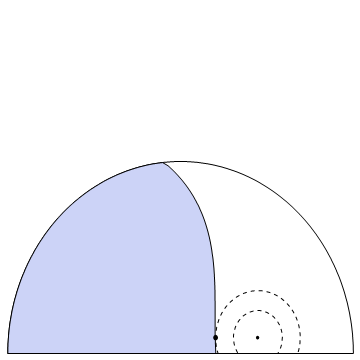}}
\put(45,95){$\scriptstyle V$}
%\put(60,80){$\scriptstyle U$}
\put(30,20){$\scriptstyle \sE(u)$}
\put(135,69){$\scriptstyle \sC(u)$}
\put(109,10){$\scriptstyle P_1$}
\put(145,10){$\scriptstyle P_0$}
\put(130,40){$\scriptstyle dy_0$}
\put(136,28){$\scriptstyle \rho_0/4$}
\end{picture}
\caption{Regions for Case \ref{case:pfthm-dgeq1_and_dy0geqquarterrhozero} of Step \ref{step:pfthm-dgeq1} (where $d > 1$ and $d y_0 \geq  \rho_0/4$) of the proof of Theorem \ref{thm-c11} for estimating the $C^{1,1}_s$ norm of a solution near the free boundary and near the degenerate boundary.}
\label{fig:c11_estimate_near_degen}
\end{figure}

\begin{step}[$d > 1$]
\label{step:pfthm-dgeq1}
%COMMENT 6.2.2012: Next line duplicative
%Recall from \eqref{eq:defn-d} that $d$ is  the maximum number such that $B_{dy_0}  (P_0) \cap \E(u) \cap \bar V_1 = \emptyset$.
We shall consider two cases.

\setcounter{case}{0}
\begin{case}[$d > 1$ and $d y_0 \geq  \rho_0/4$]
\label{case:pfthm-dgeq1_and_dy0geqquarterrhozero}
Since $d y_0 \geq  \rho_0/4$ for this case
%COMMENT PF6.3.2012: replaced next line by adding \cap V in displayed equation below
%and $d y_0 \leq  R_0/4$ by \eqref{eq:bound-dy0}, we have
%COMMENT PF5.16.2012: we already fixed rh0_0; no need to repeat
%where $\rho_0 $ is as  in Corollary  \ref{cor-sup3}
(see Fig. \ref{fig:c11_estimate_near_degen})
$$
B^+_{\rho_0/4}(P_0) \subset B^+_{dy_0}(P_0)\cap V \subset \sC(u)\cap V,
$$
%COMMENT 6.2.2012: by definition, \sC(u) \subset V\cup\Gamma_0
and so $Lu=0$ on $B_{\rho_0/4}^+(P_0)$. The Schauder estimate \eqref{eqn-schauder} therefore yields
\begin{equation}
\label{eq:c11u-schauder}
\| u \|_{C^{1,1}_s(\bar B_{\rho_0/8}^+(P_0))} \leq C\| u \|_{C^{2+\alpha}_s(\bar B_{\rho_0/8}^+(P_0))} \leq C\| u \|_{C(\bar B_{\rho_0/4}^+(P_0))},
\end{equation}
with a constant, $C$, depending at most on $\alpha,\rho_0$, and the coefficients of $L$,
%COMMENT PF6.2.2012: Added references
recalling that we have chosen $0<y_0\leq 1$ by our assumption \eqref{eq:y0leq1} for this section (and thus $\Lambda=1$).
%COMMENT PF5.16.2012: \rho_0 doesn't depend on the C^{1,1} norm of \psi
This yields  the desired bound  \eqref{eqn-c11}  for this case.
\end{case}

\begin{case}[$d > 1$ and $dy_0 < \rho_0/4$]
\label{case:pfthm-dgeq1_and_dy0lessquarterrhozero}

%COMMENT 6.2.2012: Omitted U and its boundary circle as not to scale and already given in the first figure
\begin{figure}[htbp]
\centering
\begin{picture}(200,200)(0,0)
\put(0,0){\includegraphics[height=200pt]{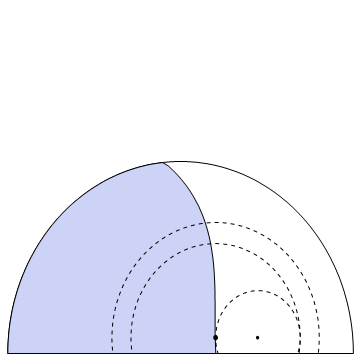}}
\put(45,95){$\scriptstyle V$}
\put(30,20){$\scriptstyle \sE(u)$}
\put(118,83){$\scriptstyle \sC(u)$}
\put(109,10){$\scriptstyle P_1$}
\put(145,10){$\scriptstyle P_0$}
\put(135,40){$\scriptstyle dy_0$}
\put(89,72){$\scriptstyle \rho_0$}
\put(128,64){$\scriptstyle 2dy_0$}
\end{picture}
\caption{Regions for Case \ref{case:pfthm-dgeq1_and_dy0lessquarterrhozero} of Step \ref{step:pfthm-dgeq1} (where $d > 1$ and $dy_0 < \rho_0/4$) of the proof of Theorem \ref{thm-c11} for estimating the $C^{1,1}_s$ norm of a solution near the free boundary and near the degenerate boundary. (The radii $2dy_0$ and $\rho_0$ are not drawn to scale, since $2dy_0 < \rho_0/2$ for this case.)}
\label{fig:c11_estimate_near_degen_smalldyzero}
\end{figure}

Since $dy_0 < \rho_0/4$, and also $P_1\in\bar V_1$ and $\dist(P_1,P_0)=dy_0$ by \eqref{eq:defn-P1} and $\rho_0 < R_0/2$ by \eqref{eq-size-rho0}, we have (see Fig. \ref{fig:c11_estimate_near_degen_smalldyzero})
%COMMENT PF 6.2.2012: Added \Gamma_0 and \Subset to final inclusion
$$
B^+_{dy_0}(P_0) \subset B^+_{2dy_0}(P_1)\subset B^+_{\rho_0}(P_1) \Subset V\cup\Gamma_0,
$$
Thus, it follows from \eqref{eqn-bound3} (with $P_0$ replaced by $P_1$ and $\rho = 4dy_0 < \rho_0$) and Taylor's theorem (since $\dist(P_1,P_0)=dy_0$) that
\begin{align*}
\sup_{B^+_{dy_0}(P_0)} (u-\psi (P_0)) &\leq \sup_{B^+_{2dy_0}(P_1)} (u-\psi (P_0))
\\
&\leq \sup_{B^+_{2dy_0}(P_1)} (u-\psi (P_1)) + | \psi(P_1) - \psi (P_0) |
\\
&\leq Cdy_0\| \psi \|_{C^{1,1}(\bar B^+_{\rho_0}(P_0))},
\end{align*}
for a constant, $C$, depending at most on the coefficients of $L$, and hence
\begin{equation}
\label{eqn-good11}
\sup_{B^+_{dy_0}(P_0)} (u-\psi (P_0)) \leq C dy_0\| \psi \|_{C^{1,1}(\bar V)}.
\end{equation}
%COMMENT PF5.16.2012: Note new meaning of C above
We now consider the  function
\begin{equation}
\label{eq:defnw-case2-pfthm}
w:=u-\psi (P_0) \in C_s^{2+\alpha}(B^+_{dy_0}(P_0)\cup\Gamma_0)\cap C(\bar B^+_{dy_0}(P_0)),
\end{equation}
which satisfies the equation
$$
Lw = - L \psi(P_0) = r \psi(P_0) \quad\hbox{on }B^+_{dy_0}(P_0),
$$
since $B_{d y_0}  (P_0) \cap \E(u)  = \emptyset$.  By defining the rescaled function
$$
\bar w(x,y) := \frac {1}{d  y_0}w(x_0 + d y_0 x, y_0 +  d  y_0  y), \quad\forall(x,y) \in D,
$$
on $D:=B_1 \cap \{y > -1/d\} = B_1 \cap \{y_d > 0\}$, we see that
\begin{equation}
\label{eq:Ldbarw}
L_d \bar w = r \psi(P_0) \quad \mbox{on } D,
\end{equation}
with (compare \eqref{eq:LuLvidentity} and \eqref{eqn-Lv})
$$
L_d  \bar w := y_d\left( \bar w_{xx} + 2 \varrho  \sigma \bar w_{xy} + \sigma^2 \bar w_{yy} \right) +
\left(r - q\frac {d  y_0 y_d}2\right)\bar  w_x + \kappa\left(\theta - d y_0 y_d \right)\bar  w_y - r d  y_0 \bar w,
$$
and $y_d :=1/d  + y$. The operator $L_d$ becomes degenerate at $y_d =0$ or  equivalently $y=- 1/d$ which explains why the  domain of consideration in the new variables is the intersection, $D$.

It follows from the bound \eqref{eqn-good11} that
$$
\|\bar w\|_{C(\bar D)} \leq C\| \psi \|_{C^{1,1}(\bar V)},
$$
for a constant, $C$, depending at most on the coefficients of $L$. Denote $D_{1/2}:=B_{1/2} \cap \{y_d > 0\}$. Hence, combining the preceding inequality with the  Schauder estimate \eqref{eqn-schauder} for the solution $\bar w$ to the equation \eqref{eq:Ldbarw} and noting that $y_d = 1/d+y \geq 1/d$ on $\{y\geq 0\}$, yields the bound
\begin{align*}
\frac{1}{d}|  D^2 \bar w (0,0) | + |D \bar w(0,0)| &\leq \|y_dD^2 \bar w\|_{C(\bar D_{1/2})} + \|D \bar w\|_{C(\bar D_{1/2})}
\\
&\leq \|\bar w\|_{C^{2+\alpha}_s(\bar D_{1/2})}
\\
&\leq C\left( \| \bar w \|_{C(\bar D)} + r w(P_0) \right)
\\
&\leq C\| \psi \|_{C^{1,1}(\bar V)},
\end{align*}
recalling that $w(P_0) = dy_0\bar w(0,0)$; here, $C$ is a constant depending at most on the coefficients of $L$. Since  $D^2 \bar w  (0,0) = d   y_0  D^2 w (P_0)$ and $D \bar w  (0,0) =  D w (P_0)$, we obtain
$$
y_0|  D^2w (P_0) | + |Dw (P_0)| \leq C\| \psi \|_{C^{1,1}(\bar V)},
$$
and thus, by \eqref{eq:defnw-case2-pfthm},
$$
y_0|  D^2u (P_0) | + |Du (P_0)| \leq C\| \psi \|_{C^{1,1}(\bar V)},
$$
for a possibly larger constant, $C$, but depending at most on the coefficients of $L$. This implies \eqref{eqn-c11} for this case.
\end{case}
\end{step}

We consider the third situation.

\begin{step}[$\rho_0/4 <   d \leq  1$]
\label{step:pfthm-quarterrhozero_leq_d_leq_one}
This is the simplest situation.  As in Step \ref{step:pfthm-dgeq1}, we consider two cases.
%COMMENT PF6.2.2012 We already fixed rho_0; no need to fix again
%Let $\rho_0$ be the constant in Corollary  \ref{cor-sup3}.

%COMMENT 6.2.2012: Omitted U and its boundary circle as not to scale and already given in the first figure
\begin{figure}[htbp]
\centering
\begin{picture}(200,200)(0,0)
\put(0,0){\includegraphics[height=200pt]{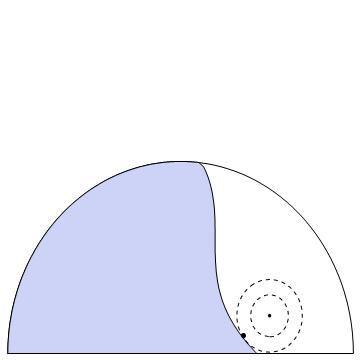}}
\put(45,95){$\scriptstyle V$}
\put(30,20){$\scriptstyle \sE(u)$}
\put(131,53){$\scriptstyle \sC(u)$}
\put(125,10){$\scriptstyle P_1$}
\put(140,20){$\scriptstyle P_0$}
\put(142,37){$\scriptstyle \rho_0/4$}
\put(165,38){$\scriptstyle dy_0$}
\end{picture}
\caption{Regions for Case \ref{case:pfthm-quarterrhozero_leq_d_leq_one_and_dyzero_geq_quarterrhozero} of Step \ref{step:pfthm-quarterrhozero_leq_d_leq_one} (where $\rho_0/4 <   d \leq  1$ and $d y_0 \geq \rho_0/4$) of the proof of Theorem \ref{thm-c11} for estimating the $C^{1,1}_s$ norm of a solution near the free boundary and away from the degenerate boundary.}
\label{fig:c11_estimate_away_degen_intermedtangentball}
\end{figure}

\setcounter{case}{0}
\begin{case}[$\rho_0/4 <   d \leq  1$ and $d y_0 \geq \rho_0/4$]
\label{case:pfthm-quarterrhozero_leq_d_leq_one_and_dyzero_geq_quarterrhozero}
When $d y_0 \geq  \rho_0/4$
%COMMENT PF6.2.2012 We already fixed rho_0; no need to fix yet again!
%where $\rho_0 $ is as  in Corollary  \ref{cor-sup3}
(see Fig. \ref{fig:c11_estimate_away_degen_intermedtangentball}), we have $Lu=0$ in $B_{\rho_0/4}^+(P_0)$, the estimate \eqref{eq:c11u-schauder} for $u$ holds, and \eqref{eqn-c11} follows in this case.
\end{case}

\begin{case}[$\rho_0/4 <   d \leq  1$ and $d y_0 < \rho_0/4$]
\label{case:pfthm-quarterrhozero_leq_d_leq_one_and_smallyzero}
We now assume that $d y_0 < \rho_0/4$ (see Fig. \ref{fig:c11_estimate_away_degen_intermedtangentball_smalldyzero}). We consider
%COMMENT PF5.16.2012: Shouldn't need to keep repeating definition
\begin{equation}
\label{eq:defnw-case3-pfthm}
w:=u-\psi(P_0) \in C^{2,\alpha}(B_{dy_0}(P_0))\cap C(\bar B_{dy_0}(P_0)),
\end{equation}
and the rescaled function,
$$
\bar w(x,y) := \frac {1}{y_0}w(x_0 +  y_0 x, y_0 +  y_0  y), \quad (x,y) \in B_d,
$$
which   satisfies (compare \eqref{eq:LuLvidentity})
$$
\bar L \bar w = r \psi(P_0)   \quad \mbox{on } B_d,
$$
with (compare \eqref{eqn-Lv})
\begin{align*}
\bar L  \bar w &:= \frac{1+y}2 \left( \bar w_{xx} + 2 \varrho  \sigma \bar w_{xy} + \sigma^2 \bar w_{yy} \right) +
\left(r - q-\frac {y_0(1+y)}2\right)\bar w_x
\\
&\qquad + \kappa\left(\theta - y_0 (1+y)\right)\bar w_y - ry_0 \bar w.
\end{align*}
%COMMENT PF5.16.2012: r\bar w in last term on right
The operator $\bar L$ is strictly elliptic on $B_d$ with ellipticity constant bounded below by a positive constant depending at most on the coefficients of $L$. In addition, since $dy_0 < \rho_0/4$, the bound \eqref{eqn-good11} applies (irrespective of whether  $d\leq 1$ or $d>1$)
%COMMENT PF5.16.2012: Maybe better to have this as a separate claim, with its own hypotheses, to avoid confusion over conditions as to when this applies and awkwardness about having to say holds irrespective of $d\leq 1$ or $d>1$
to give
$$
|w| \leq Cdy_0\| \psi \|_{C^{1,1}(\bar V)} \quad\hbox{on } B_{dy_0}(P_0),
$$
and thus
$$
|\bar w| \leq Cd\| \psi \|_{C^{1,1}(\bar V)} \quad\hbox{on }B_{d},
$$
for a constant, $C$, depending at most on the coefficients of $L$.
%
%COMMENT 6.2.2012: Omitted U and its boundary circle as not to scale and already given in the first figure
\begin{figure}[htbp]
\centering
\begin{picture}(200,200)(0,0)
\put(0,0){\includegraphics[height=200pt]{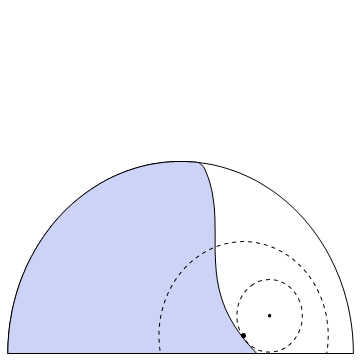}}
\put(45,95){$\scriptstyle V$}
\put(30,20){$\scriptstyle \sE(u)$}
\put(125,75){$\scriptstyle \sC(u)$}
\put(125,10){$\scriptstyle P_1$}
\put(140,20){$\scriptstyle P_0$}
\put(138,45){$\scriptstyle dy_0$}
\put(146,66){$\scriptstyle \rho_0$}
\end{picture}
\caption{Regions for Case \ref{case:pfthm-quarterrhozero_leq_d_leq_one_and_smallyzero} of Step \ref{step:pfthm-quarterrhozero_leq_d_leq_one} (where $\rho_0/4 <   d \leq  1$ and $d y_0 < \rho_0/4$) of the proof of Theorem \ref{thm-c11} for estimating the $C^{1,1}_s$ norm of a solution near the free boundary and away from the degenerate boundary. (The radii $dy_0$ and $\rho_0$ are not drawn to scale, since $dy_0<\rho/4$ for this case.)}
\label{fig:c11_estimate_away_degen_intermedtangentball_smalldyzero}
\end{figure}
Combining the preceding estimate with the classical Schauder interior estimate \cite[Corollary 6.3]{GilbargTrudinger} gives
\begin{align*}
|D^2 \bar w(0)| + |D\bar w (0)|  &\leq \|\bar  w \|_{C^{2,\alpha}(\bar B_{d/2})}
\\
&\leq C\left( \|\bar  w \|_{C(\bar B_{d}) }+ |\psi(P_0)|\right)
\\
&\leq C\| \psi \|_{C^{1,1}(\bar V)},
\end{align*}
again for a constant, $C$, depending at most on the coefficients of $L$ (recall that $\rho_0/4\leq d\leq 1$ in this case and that $\rho_0$ depends at most on the coefficients of $L$). Hence,
$$
y_0|D^2w(P_0)| + |Dw (P_0)|  \leq C\| \psi \|_{C^{1,1}(\bar V)},
$$
since  $D^2 \bar w  (0,0) = y_0  D^2 w (P_0)$ and $D \bar w  (0,0) =  D w (P_0)$. Thus, by \eqref{eq:defnw-case3-pfthm},
$$
y_0|D^2u(P_0)| + |Du (P_0)|  \leq C\| \psi \|_{C^{1,1}(\bar V)},
$$
for a possibly larger constant, $C$, and \eqref{eqn-c11} follows in this case too.
\end{case}
\end{step}
This completes the proof of Theorem \ref{thm-c11}.
\end{proof}

%%%%%%%%%%%%%%%%%%%%%%%%%%%%%%%%%%%%%%%%%%%%%%%%%%%%%%%%%%%%%%%%%%%%%%%%%%%%%%%
%
%                                bibliography
%
%%%%%%%%%%%%%%%%%%%%%%%%%%%%%%%%%%%%%%%%%%%%%%%%%%%%%%%%%%%%%%%%%%%%%%%%%%%%%%%

\bibliography{mfpde}
\bibliographystyle{amsplain}

\end{document}